 \def \no{\nonumber}
\def\R {\Bbb R}
\def \Z {\Bbb Z}
\def\p{\partial}
\def\ve{\varepsilon}
\def\f{\frac}
\def\al{\alpha}
\def\t{\tilde}
\def\g{\gamma}
\def\dl{\delta}
\def\ds{\displaystyle}
\newcommand{\md}{\mathrm{d}}
\newcommand{\lc}{\left(}
\newcommand{\rc}{\right)}
\newcommand{\lp}{\left\|}
\newcommand{\rp}{\right\|}
\newcommand{\crit}{\text{crit}}
\newcommand{\conf}{\text{conf}}
\newcommand{\ra}{[T_0,\infty)\times\mathbb{R}^n}
\newcommand{\ral}{[T_0,t)\times\mathbb{R}^n}
\newcommand{\rb}{[1,2]\times\mathbb{R}^n}
\begin{document}
 \footskip=0pt
 \footnotesep=2pt
\let\oldsection\section
\renewcommand\section{\setcounter{equation}{0}\oldsection}
\renewcommand\thesection{\arabic{section}}
\renewcommand\theequation{\thesection.\arabic{equation}}
\newtheorem{claim}{\noindent Claim}[section]
\newtheorem{theorem}{\noindent Theorem}[section]
\newtheorem{lemma}{\noindent Lemma}[section]
\newtheorem{proposition}{\noindent Proposition}[section]
\newtheorem{definition}{\noindent Definition}[section]
\newtheorem{remark}{\noindent Remark}[section]
\newtheorem{corollary}{\noindent Corollary}[section]
\newtheorem{example}{\noindent Example}[section]

\title{Global existence for small amplitude semilinear wave equations
 with  time-dependent scale-invariant damping}
\author{Daoyin He$^{1*}$, \qquad
Yaqing Sun$^{2*}$, \qquad Kangqun Zhang\(^2\)
\footnote{Daoyin He (101012711@seu.edu.cn) is supported by BK20233002 and 2242023R40009. Yaqing Sun (yaqingsun@njit.edu.cn) and Kangqun Zhang (chkq@njit.edu.cn) are supported by 23KJB110012, Yaqing Sun is also supported by YKJ202218. Daoyin He is the corresponding author.
}\vspace{0.5cm}
\\
\small 1.
School of Mathematics, Southeast University, Nanjing 211189, China.\\
\small 2.
School of Mathematical and Physics, Nanjing Institute of Technology,
Nanjing 210067, China.}

\vspace{-1.5cm}

\date{}
\maketitle
\thispagestyle{empty}

\centerline{}

\begin{abstract}
	In this paper we prove a sharp global existence result for semilinear wave equations with time-dependent scale-invariant damping terms if the initial data is small.

More specifically, we consider Cauchy problem of $\p_t^2u-\Delta u+\f{\mu}{t}\p_tu=|u|^p$, where $n\ge 3$, $t\ge 1$ and $\mu\in(0,1)\cup(1,2)$. For critical exponent \(p_{\crit}(n,\mu)\) which is the positive root of \((n+\mu-1)p^2-(n+\mu+1)p-2=0\) and conformal exponent \(p_{\conf}(n,\mu)=\frac{n+\mu+3}{n+\mu-1}\), we establish global existence for \(n\geq3\) and \(p_{\crit}(n,\mu)<p\leq p_{\conf}(n,\mu)\). The proof is based on changing the wave equation
 into the semilinear generalized Tricomi equation $\p_t^2u-t^m\Delta u=t^{\al(m)}|u|^p$,
 where $m=m(\mu)>0$ and  $\al(m)\in\Bbb R$ are two suitable constants, then we investigate
 more general semilinear Tricomi equation $\p_t^2v-t^m\Delta v=t^{\al}|v|^p$ and establish related weighted Strichartz estimates.
Returning to the original wave equation, the corresponding global existence results on the small data solution $u$ can be obtained.
\end{abstract}

\vskip 0.2 true cm

{\bf Keywords:} Global existence, Generalized Tricomi equation, Weighted Strichartz estimate, Fourier integral operator,
 Weak solution \vskip 0.2 true cm

{\bf Mathematical Subject Classification 2000:} 35L70, 35L65,
35L67
\tableofcontents

\vskip 0.5 true cm
\phantomsection
\section{Introduction}

\subsection{Setting of the problem and statement of the main result}
Consider the Cauchy problem of the semilinear wave equation with time-dependent damping
\begin{equation}\label{equ:eff}
\left\{ \enspace
\begin{aligned}
&\partial_t^2 u-\Delta u +\f{\mu}{t^{\beta }}\,\p_tu=|u|^p, &&
  (t,x)\in (t_0,\infty)\times \R^{n},\\
&u(t_0,x)=u_0(x), \quad \partial_{t} u(t_0,x)=u_1(x),
\end{aligned}
\right.
\end{equation}
where $\mu\ge 0$, $\beta\ge 0$, $p>1$, $n\ge 2$ and \(t_0\geq0\). When $\mu=0$ and \(t_0=0\) is , \eqref{equ:eff} becomes
\begin{equation}\label{equ:eff-1}
\left\{ \enspace
\begin{aligned}
&\partial_t^2 u-\Delta u=|u|^p, \\
&u(0,x)=u_0(x), \quad \partial_{t} u(0,x)=u_1(x).
\end{aligned}
\right.
\end{equation}
For \eqref{equ:eff-1}, Strauss in \cite{Strauss} proposed the following well-known conjecture (Strauss' conjecture):

{\it Let
$p_s(n)$ denote the positive root of the quadratic algebraic equation
\begin{equation}\label{1.3}
\left(n-1\right)p^2-\left(n+1\right)p-2=0.
\end{equation}
If $p>p_s(n)$, \eqref{equ:eff-1} admits a unique global solution with small data; whereas if $1<p\leq p_s(n)$, the solution of \eqref{equ:eff-1} can blow up in finite time.}

So far the Strauss' conjecture has been systematically studied and solved well, see
\cite{Gls1,Gla1, Gla2,Joh,Gls2,Sch,Sid,Yor}.
Especially, in \cite{Wang}, one can find the
detailed history on the studies of \eqref{equ:eff-1}.

When $\beta=0$ holds and $\mu=1$ is assumed without loss of generality,  \eqref{equ:eff} with \(t_0=0\) becomes
\begin{equation}\label{equ:eff-2}
\left\{ \enspace
\begin{aligned}
&\partial_t^2 u-\Delta u+\p_t u=|u|^p, \\
&u(0,x)=u_0(x), \quad \partial_{t} u(0,x)=u_1(x).
\end{aligned}
\right.
\end{equation}
Nowadays the study of \eqref{equ:eff-2} is almost classic, it is shown that
the solution $u$ can blow up in finite time when \(1<p\le p_f(n)\) with the Fujita exponent
$p_f(n)=1+\f{2}{n}$ as defined in \cite{Fuj} for the semilinear parabolic equations, while the small data solution $u$ exists globally
for $p>p_f(n)$ with $n=1,2$ and $p_f(n)<p<\f{n}{n-2}$ for $n\ge 3$, see
\cite{EGR}, \cite{Li}, \cite{TY},  \cite{W1} and \cite{ZQ}. Recently, Chen and Reissig in \cite{C-R} considered \eqref{equ:eff-2} with data from Sobolev spaces of negative order, they obtained a new critical exponent \(p_c=1+\frac{4}{n+2\gamma }\) and established small data global existence result for \(1<p<p_c\) and blow up result for \(1<p<p_c\) provided that \(1\leq n\leq6\), while the global existence for \(p=p_c\) is proved by D'Abbicco \cite{DA-1} in Euclidean setting and in the Heisenberg space.

For $\beta>1$ or $0<\beta<1$, consider the corresponding linear homogeneous equation
\begin{equation}\label{equ:linear}
	\left\{ \enspace
\begin{aligned}
&\partial_t^2 v-\Delta v +\f{\mu}{t^{\beta }}\,\p_tv=0, &&
  (t,x)\in (1,\infty)\times \R^{n},\\
&v(1,x)=v_0(x), \quad \partial_{t} v(1,x)=v_1(x),
\end{aligned}
\right.
\end{equation}
then the long time decay rate of $v$ just corresponds to that of linear wave equation or linear parabolic equation, respectively, see Wirth \cite{Wirth-1, Wirth-2}.
When $0<\beta <1$ and $\mu>0$, if $p>p_f(n)$ with \(n=1,2\) or \(p_f(n)<p<\frac{n+2}{n-2}\) with \(n\geq3\), then $\eqref{equ:eff}$ has a global small data solution $u$, see D'Abbicco, Luencte and Reissig \cite{Rei2}, Lin, Nishihara and Zhai \cite{Zhai} and Nishihara \cite{N};
if $1<p\le p_f(n)$, the solution $u$ generally blows up in finite time, one can be referred to Fujikawa, Ikeda and Wakasugi \cite{FIW}.
When $\beta>1$ and $\mu>0$,  the global existence or blowup results on \eqref{equ:eff} are analogous to the ones on
  \eqref{equ:eff-1}. It is shown that for \(p>p_s(n)\), the global small data solution exists (see Liu and Wang  \cite{LW}), while the
  solution may blow up in finite time for $1<p\le p_s(n)$ (see Lai and Takamura \cite{LT} and Wakasa, Yordanov \cite{WY}).

When $\beta=1$, the equation in \eqref{equ:linear} is invariant under the scaling
\[\tilde{u}(x,t):=u(\sigma x, \sigma t), \quad \sigma>0, \]
and hence we say the damping term is scale-invariant. In this case,
the behavior of \eqref{equ:eff-1} depends on the scale of \(\mu\). If \(n=1\) with \(\mu\geq\frac{5}{3}\) or \(n=2\) with \(\mu\geq3\) or \(n\geq3\) with \(\mu\geq n+2\) and \(p_f(n)<p<\frac{n}{n-2}\), then the global
existence of small data solution $u$ has been established in the important work \cite{DA} by D'Abbicco. For relatively small \(\mu\),
so far there are few results on  the global solution of \eqref{equ:eff} with $\beta=1$, recently, for \(n=3\) and the radial symmetrical case, the global small solution is shown in Lai and Zhou \cite{LZ} for \(\mu\in[\frac{3}{2},2)\) and \(p_s(3+\mu)<p\leq2\). On the other hand, there have been systematic results for blow up: for the case of \(n=1\), if \(0<\mu<\frac{4}{3}\) and \(1<p\leq p_s(1+\mu)\), or \(\mu\geq\frac{4}{3}\) and \(1<p\leq p_f(1)\), the solution will blowup in finite time; for the case of \(n\geq2\), the blowup results are also established when \(\mu>0\) and \(1<p<p_s(n+\mu)\), or \(0<\mu<\frac{n^2+n+2}{n+2}\) and \(1<p\le p_s(n+\mu)\) (see the works in \cite{IS, LTW, TL1, TL2, W1, W2} by Ikeda, Sobajima, Lai, Takamura, Wakasa, Tu, Lin and Wakasugi).
These results enlighten us the critical exponent for \(0<\mu<\frac{n^2+n+2}{n+2}\) should be \(p_{\crit}(n,\mu)=p_s(n+\mu)\) where \(p_s(n+\mu)\) is the positive root of the following quadratic equation:
\begin{equation}\label{equ:crit-mu}
	(n+\mu-1)p^2-(n+\mu+1)p-2=0.
\end{equation}

From our last paper \cite{HLYin}, we start to study the global existence of small data solutions to problem \eqref{equ:eff} with $\beta=1$,
$\mu\in (0,1)\cup(1,2)$ and $p>p_s(n+\mu)$.
That is, we focus on the following regular Cauchy problem
\begin{equation}\label{equ:eff1}
\left\{ \enspace
\begin{aligned}
&\partial_t^2 u-\Delta u +\f{\mu}{t}\,\p_tu=|u|^p, \\
&u(1,x)=\varepsilon u_0(x), \quad \partial_{t} u(1,x)=\varepsilon u_1(x),
\end{aligned}
\right.
\end{equation}
where \((u_0, u_1)\in C^\infty_c(\R^n)\) and $n\ge 2$. Comparing \eqref{1.3} and \eqref{equ:crit-mu}, we see that we have a shift from \(n\) to \(n+\mu\) in the coefficients of the quadratic equations of \(p\). Thus the conformal exponent for \eqref{equ:eff-1} should be
\begin{equation}
	p_{\conf}(n,\mu)=\frac{n+\mu+3}{n+\mu-1}.
\end{equation}
In \cite{HLYin}, we have obtained global existence result for small data solution for \(n\geq2\), \(\mu\in(0,1)\) and \(p\geq\max\{p_{\conf}(m,\mu), 3\}\), or \(\mu\in(1,2)\) and \(p\geq\max\{p_{\conf}(n,\mu), \frac{4}{\mu}-1 \}\).
The main results in this article can be stated as follows.

\begin{theorem}\label{YH-1}
Assume that \(n\geq3\), $\mu\in(0,1)\cup(1,2)$ and \(p_{crit}(n,\mu)<p\leq p_{conf}(n,\mu)\).
Suppose $u_i\in C_c^{\infty}(\R^n)$ ($i=0, 1$),  then for $\ve>0$ small enough problem \eqref{equ:eff1} admits a global weak solution \(u\) with
\begin{equation}	\left(1+\big|t^2-|x|^2\big|\right)^{\gamma}t^{\frac{\mu}{p+1}}u\in L^{p+1}([1,\infty)\times\R^{n}),
\end{equation}
for some \(\gamma\) satifying
\[\frac{1}{p(p+1)}<\gamma<\frac{(n+\mu-1)p-(n+\mu+1)}{2(p+1)}. \]
\end{theorem}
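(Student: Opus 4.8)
The plan is to transfer the problem from the damped wave equation to a generalized Tricomi equation, solve the latter via weighted Strichartz estimates, and then translate the solution back. First I would perform the change of unknown that the authors announce: write $u(t,x)=t^{-\mu/2}v(\phi(t),x)$ (or the analogous substitution with a new time variable $\tau=\tau(t)$) so that the operator $\partial_t^2-\Delta+\tfrac{\mu}{t}\partial_t$ is conjugated, up to the lower-order $t^{-2}$ term which is absorbed by the scaling $1/(1-m)$, into $\partial_\tau^2-\tau^m\Delta$ with $m=m(\mu)>0$, and the right side $|u|^p$ becomes $\tau^{\alpha(m)}|v|^p$ with $\alpha(m)$ explicitly computed from $\mu$ and $p$. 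I would record the precise dictionary between the decay/integrability exponents on $u$ (namely the weight $(1+|t^2-|x|^2|)^\gamma t^{\mu/(p+1)}$ in $L^{p+1}$) and the corresponding weighted $L^{p+1}$ space in the $(\tau,x)$ variables for $v$, since the hyperbolic weight $t^2-|x|^2$ transforms into the natural light-cone weight $\phi(\tau)^2-|x|^2$ for the Tricomi operator.

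Next I would set up the fixed-point / iteration scheme for the Tricomi problem $\partial_\tau^2 v-\tau^m\Delta v=\tau^\alpha|v|^p$ with small data. The linear solution operator for $\partial_\tau^2-\tau^m\Delta$ is an explicit Fourier integral operator whose phase involves $\phi(\tau)=\tfrac{2}{m+2}\tau^{(m+2)/2}$; the core analytic input is the family of weighted Strichartz (Georgiev–Lindblad–Sogge type) estimates adapted to this operator, which I would invoke in the form: if $F$ lies in a weighted $L^{p'+1}$ space with weight a power of $\big|\phi(\tau)^2-|x|^2\big|$, then the Duhamel solution lies in the dual weighted $L^{p+1}$ space, provided $p$ lies strictly between $p_{\mathrm{crit}}(n,\mu)$ and $p_{\mathrm{conf}}(n,\mu)$ and $\gamma$ is chosen in the stated interval $\big(\tfrac{1}{p(p+1)},\,\tfrac{(n+\mu-1)p-(n+\mu+1)}{2(p+1)}\big)$. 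The constraint $p>p_{\mathrm{crit}}$ is exactly what keeps the lower endpoint of the $\gamma$-interval below the upper endpoint, and $p\le p_{\mathrm{conf}}$ is what keeps the weighted estimate in the range where the FIO bound holds. Then a Picard iteration in the ball $\{ \|(\phi^2-|x|^2)^\gamma \tau^{\#}v\|_{L^{p+1}}\le 2C\varepsilon \}$ closes by Hölder in the weighted norm together with the nonlinear estimate $\big\| |v|^p \big\|$ controlled by $\|v\|^p$ in the appropriate weighted Lebesgue spaces, giving both existence and uniqueness of a fixed point for $\varepsilon$ small; its regularity as a weak solution is then routine.

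Finally I would convert back: undo the substitution to obtain $u$, check that the weighted bound on $v$ yields precisely $\big(1+|t^2-|x|^2|\big)^{\gamma}t^{\mu/(p+1)}u\in L^{p+1}([1,\infty)\times\mathbb{R}^n)$ (tracking the Jacobian $\mathrm{d}\tau = c\,t^{-m/\cdots}\mathrm{d}t$ and the $t^{-\mu/2}$ prefactor, which together produce the $t^{\mu/(p+1)}$ weight), and verify that $u$ solves \eqref{equ:eff1} in the weak sense and attains the data. The main obstacle, I expect, is establishing the weighted Strichartz estimate for the Tricomi FIO uniformly down to $p$ just above $p_{\mathrm{crit}}$: near that endpoint the admissible $\gamma$-window pinches, so one needs the estimate with a sharp (non-loss) power of the light-cone weight, which requires a careful stationary-phase / $TT^*$ analysis of the FIO kernel, a decomposition into dyadic pieces in $|\phi(\tau)^2-|x|^2|$, and interpolation against an energy-type bound — this is where the conformal-type symmetry of the Tricomi equation and the precise value of $\alpha(m)$ have to be used to make the powers match. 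A secondary technical point is handling the initial time $\tau_0>0$ (since $t=1$ maps to $\tau_0\ne 0$) and the mild singularity of the operator's coefficients, but that is dealt with by the usual finite-time energy estimates on a compact initial layer before patching to the global weighted estimate.
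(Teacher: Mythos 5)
Your high-level strategy — reduce the scale-invariant damped wave to a generalized Tricomi equation, establish weighted (Georgiev--Lindblad--Sogge type) Strichartz estimates with the light-cone weight $\phi_m(t)^2-|x|^2$ for the Tricomi operator, run a Picard iteration in the weighted $L^{p+1}$ ball, and translate back — is exactly the paper's architecture. However, the specific reduction you propose has a genuine gap that would invalidate the argument.

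You write $u(t,x)=t^{-\mu/2}v(\phi(t),x)$ and assert that the resulting lower-order $t^{-2}$ term is ``absorbed by the scaling.'' It is not: a multiplicative substitution $v=t^a u$ turns the operator $\partial_t^2-\Delta+\tfrac{\mu}{t}\partial_t$ into $\partial_t^2-\Delta+\tfrac{\mu-2a}{t}\partial_t+\tfrac{a(a+1-\mu)}{t^2}$. Your choice $a=\mu/2$ kills the damping but leaves a nonvanishing $\tfrac{\mu(2-\mu)}{4t^2}$ Klein--Gordon potential, and the time change to the Tricomi form cannot remove it. The paper explicitly flags this in Remark 1.1 (for $\mu=1$ the Liouville transform produces a Klein--Gordon equation with time-dependent coefficient, which is an open obstruction). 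The paper's reductions are chosen precisely so that no such residual term appears: for $0<\mu<1$ it uses a pure time change $t=T^{k+1}/(k+1)$ with $\mu=\tfrac{k}{k+1}$ (no prefactor on $u$), under which $\partial_t^2+\tfrac{\mu}{t}\partial_t = T^{-2k}\partial_T^2$ exactly; for $1<\mu<2$ it first sets $v=(1+t)^{\mu-1}u$ — note the exponent $\mu-1$, not $-\mu/2$ — since $a=\mu-1$ gives $a(a+1-\mu)=0$ and damping coefficient $2-\mu\in(0,1)$, and then applies the same time change. Your substitution as written does not reduce to the equation your Strichartz estimate applies to, so the bulk of the argument would fail.

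Two further points worth flagging: (i) the reduction for $\mu\in(1,2)$ makes the forcing exponent $\alpha=1+m-p$ depend on $p$, and one must actually verify that for $n\ge 3$ the full range $p\in(p_{\crit},p_{\conf}]$ maps into $\alpha\in(-1,m]$ so that the Tricomi theorem is applicable; your sketch does not address this, and for $n=2$ it in fact fails for large $\mu$ (which is why the theorem is stated only for $n\ge 3$). (ii) The nonlinear closing step requires not just Hölder in the weighted norm but the specific relation $q=p+1$ and the numerology $p\gamma>\tfrac{1}{q}$, $\gamma<\tfrac{n}{2}-\tfrac{1}{m+2}-\tfrac{1}{q}(n+\tfrac{2\alpha-m}{m+2})$, which correspond to the stated $\gamma$-window after unwinding the change of variables; your dictionary of exponents needs to produce these exactly, not just qualitatively.
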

\begin{remark}\label{JY-4}
{\it For $\mu=1$, under the Liouville transformation $v=(1+t)^{\f12}u$, the equation in \eqref{equ:eff1}
becomes the semilinear Klein-Gordon equation $\p_t^2v-\Delta v+\f{1}{4(1+t)^2}v=(1+t)^{\f{1-p}{2}}|v|^p$
with time-dependent coefficient. By our knowledge, so far there are few results on the global existence of $u$
in \eqref{equ:eff1} with $\mu=1$.}
\end{remark}

\begin{remark}\label{JY-2}
{\it For $\mu=2$, set $v=(1+t)u$, then the equation in \eqref{equ:eff1}
can be changed into the undamped wave equation $\p_t^2v-\Delta v=(1+t)^{1-p}|v|^p$.
In this case, there have been a lot of results about global existence for the small data solutions, see D'Abbicco and Lucente \cite{DL}, and  D'Abbicco, Lucente and Reissig \cite{Rei1}, Kato and Sakuraba \cite{Kato} and Palmieri \cite{P}. For examples, when \(1\leq n\leq3\), the critical indices have been
determined as \(\max\{p_f(n), p_s(n+2)\}\);
when \(n\geq4\) and \(n\) is even, the global existence is established for \(p_s(n+2)<p<p_f(\frac{n+1}{2})\)
provided that the solution is radial symmetric; the global existence results also hold for \(n\geq5\) when \(n\) is odd, \(p_s(n+2)<p<\min\{2,\frac{n+3}{n-1}\}\) and the solution is radial symmetric. Recently, we obtain some
global existence results for the general small data solution of \eqref{equ:eff1}
when \(\mu=2\) and \(p\) is suitably large in Theorems 1.3 of our former work \cite{HLYin} .}
\end{remark}

\begin{remark}
	For \(n=1\), it is proven recently by Li and Yin \cite{LY-1} that for \(\mu\in(0,1)\cup(1,\frac{4}{3})\), the solution of \eqref{equ:eff1} exists for small data. While for \(n=2\), Li and Yin establish the small data global existence for
	\(\mu\in(0,1)\cup(1,2)\) in \cite{LY-2}.
\end{remark}
Note that  when  $0<\mu<1$, as in D'Abbicco, Lucente and Reissig \cite{Rei1}, if setting $\mu=\f{k}{k+1}$ with $k\in (0,\infty)$
and $t=T^{k+1}/(k+1)$, then the equation in \eqref{equ:eff1} is essentially equivalent to
\begin{equation}\label{YC-1}
\partial_T^2 u-T^{2k}\Delta u=T^{2k} |u|^p;
\end{equation}
when $1<\mu<2$, if setting
\[v(t,x)=t^{\mu-1}u(t,x),\]
then the equation in \eqref{equ:eff1} can be written as
\begin{equation}\label{equ:shift1-0}
	\partial_t^2 v-\Delta v +\f{\t\mu}{t}\,\p_tv=t^{(p-1)(\t\mu-1)}|v|^p,
\end{equation}
where $\t\mu=2-\mu\in (0,1)$, and the unimportant constant coefficients $C_{\mu}>0$ before the nonlinearities in
\eqref{YC-1} and \eqref{equ:shift1-0} are neglected. Let $\t\mu=\f{\t k}{\t k+1}$ with $\t k\in (0,\infty)$
and $t=T^{\t k+1}/(\t k+1)$. Then for $t\ge 1$, \eqref{equ:shift1-0} is actually equivalent to
\begin{equation}\label{YC-2}
\partial_T^2 u-T^{2\t k}\Delta u=T^{\al} |u|^p,
\end{equation}
where $\al=2\t k+1-p$, and the unimportant constant coefficient $C_{\t k}>0$ before the nonlinearity
of \eqref{YC-2} is also neglected.
Based on \eqref{YC-1} and \eqref{YC-2}, in order to prove Theorems \ref{YH-1},
it is necessary to study the following semilinear generalized Tricomi equation
\begin{align}\label{YH-3}
\partial_t^2 u-t^m \Delta u =t^\alpha|u|^p,
\end{align}
where $m>0$ and $\al\in\Bbb R$.

The local existence result for \eqref{YH-3} was first considered by the pioneering work \cite{Yag2} of Yagdjian. Under the conditions of \(n\geq2\), \(\alpha>-1\) and
\begin{equation}\label{equ:blow}
1<p<1+\f{4}{(m+2)n-2},
\end{equation}
it is proved in \cite[Theorem~1.3]{Yag2} that \eqref{YH-3} addmits no global solution $u\in C([0, \infty), L^{p+1}(\R^n))$.
On the other hand, if \(n\geq2\), \(\alpha>-1\) and
\begin{equation}\label{equ:ugly1}
  \left\{ \enspace
\begin{aligned}
	p&\leq1+\frac{2m}{(m+2)n+2}, \\
		\frac{(m+2)np}{p+1}&\geq\frac{2(p+1)+\max\{\alpha,\alpha p \}}{p-1} .
\end{aligned}
\right.	
  \end{equation}
then \cite[Theorem~1.2]{Yag2}
that \eqref{YH-3} has a global small data solution
$u\in C([0, \infty), L^{p+1}(\R^n)) \cap C^1([0, \infty), {\mathcal D}'(\R^n))$.
By checking \eqref{equ:blow} and \eqref{equ:ugly1} one see that the results in \cite{Yag2} were not completed
since there is a gap between the intervals for \(p\) where blowup happens or small data solution exists globally, also an upper bound of \(p\) exists for the global existence part. Later Galstian in \cite{Gal} investigated Cauchy problem for \eqref{YH-3} for the case \(n=1\), \(\alpha>-1\) and \(m\geq0\). She proved the blowup result for \(1<p<1+\frac{4}{m}\) and established the global existence for small data solution under the condition \eqref{equ:ugly1}.

Witt, Yin and the first author of this article considered the case $\al=0$ and $m>0$,
and for the initial data problem of \eqref{YH-3} starting from some positive time $t_0$,
it has been shown that there exists a critical index \(p_{\crit}(n,m)>1\) such that
when $p>p_{\crit}(n,m)$, the small data solution $u$ of \eqref{YH-3} exists globally;
when $1<p\le p_{\crit}(n,m)$, the solution $u$ may blow up in finite time,
where \(p_{\crit}(n,m)\) for \(n\geq2\) and \(m>0\) is the positive root of
\begin{equation}\label{equ:p2}
\Big((m+2)\frac{n}{2}-1\Big)p^2+\Big((m+2)(1-\frac{n}{2})-3\Big)p
-(m+2)=0,
\end{equation}
while for \(n=1\), \(p_{\crit}(1,m)=1+\frac{4}{m} \) (see
\cite{HWY1,HWY2,HWY3,HWY4,HWY5}).

It is pointed out that  about the local existence and regularity of solution~$u$ to
\eqref{YH-3} with $\al=0$ and $m\in\Bbb N$ under the weak regularity assumptions of initial data
$(u,\p_tu)(0,x)=(u_0,
u_1)$, the reader may consult Ruan, Witt and Yin \cite{Rua1, Rua2, Rua3, Rua4}.

Now let us turn back to the problem \eqref{YH-3}. Here and below denote
\[\phi_m(t)=\f{2}{m+2}t^{\f{m+2}{2}},\quad \text{and} \quad T_0=\phi_m^{-1}(1)=\left(\frac{m+2}{2} \right)^{\frac{2}{m+2}},\]
note that \(T_0\in(1,e^{1/e}]\) for all \(m>0\). We next focus on the global existence of the solution to the following problem
\begin{equation}\label{YH-4}
\begin{cases}\partial_t^2 u-t^{m} \Delta u=t^\alpha|u|^p, \\
u(T_0, x)=\ve f(x), \p_t u(T_0, x)=\ve
g(x),
\end{cases}
\end{equation}
where $m>0$, $\alpha\in\Bbb R$, $p>1$, $f(x), g(x)\in C_c^{\infty}(\Bbb R^n)$ with $n\ge 2$ and
supp $f$, supp $g\in B(0,M)$ with $M>0$.
It has been shown in Theorem 1 of Palmieri and Reissig \cite{PR} that there exists a critical exponent $p_{crit}(n,m,\alpha)>1$ for $\al>-2$
such that when $1<p\le p_{crit}(n,m,\alpha)$, the solution of \eqref{YH-4} can blow up in finite time with some suitable
choices of $(u_0,u_1)$, where
\begin{equation}\label{YH-5}
p_{crit}(n,m,\alpha)=\max \left\{p_1(n,m,\alpha), p_2(n,m,\alpha)\right\}
\end{equation}
with $p_1(n,m,\alpha)=1+\frac{2(2+\alpha)}{(m+2)n-2}$ and $p_2(n,m,\alpha)$ being the positive root of the quadratic equation:
\begin{equation}\label{YH-7}
\left(\frac{m}{2(m+2)}+\frac{n-1}{2}\right) p^2-\left(\frac{n+1}{2}-\frac{3 m}{2(m+2)}+\frac{2 \alpha}{m+2}\right) p-1=0.
\end{equation}
It is not difficult to verify that when $n\ge 3$ and $\al>-2$ or $n=2$ and $\al>-1$, $p_{crit}(n,m,\alpha)=p_2(n,m,\alpha)$
holds; when $n=2$ and $-2<\al\le -1$, $p_{crit}(n,m,\alpha)=p_1(n,m,\alpha)$ holds. However, as we will see in next subsection, in order to prove Theorem~\ref{YH-1},
it suffices to consider \(-1<\alpha\leq m\) for \(n\geq2\) in \eqref{YH-4} , thus we set  $p_{crit}(n,m,\alpha)=p_2(n,m,\alpha)$ in this article.

In \cite[(1.24)]{HLYin},  we have determined a conformal exponent
\begin{equation}\label{equ:conformal}
	p_{\conf}(n,m,\alpha)=\frac{(m+2)n+4\alpha+6}{(m+2)n-2}.
\end{equation}
Then we can state the global existence result for \eqref{YH-4}
\begin{theorem}\label{thm:global existence}{\bf (Global existence for semilinear Tricomi equation)}
Let \(m\in(0,\infty)\), assume that \(-1<\alpha\leq m\) for \(n\geq2\). For $p\in \big(p_{crit}(n,m,\alpha ), p_{conf}(n,m,\alpha)\big]$, there exists a constant \(\varepsilon_0>0\) such that if \(0<\ve<\ve_0\), then problem \eqref{YH-4} has a  global weak solution $u$ with
\begin{equation}\label{equ:1.3}
\left(1+\big|\phi_m^2(t)-|x|^2\big|\right)^{\gamma}t^{\frac{\alpha}{p+1}}u\in L^{p+1}([T_0,\infty)\times \mathbb{R}^n),
\end{equation}
for some positive constant $\gamma$ fulfills
\begin{equation}\label{equ:1.4}
\f{1}{p(p+1)}<\gamma<\frac{n}{2}-\frac{1}{m+2}-\frac{1}{p+1}\lc n+\frac{2\alpha-m}{m+2}\rc.
\end{equation}
\end{theorem}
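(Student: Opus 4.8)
# Proof Proposal for Theorem \ref{thm:global existence}

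The plan is to prove global existence for the semilinear generalized Tricomi equation \eqref{YH-4} by setting up a contraction-mapping (or Picard iteration) argument in a weighted $L^{p+1}$ space adapted to the characteristic conical structure of the operator $\partial_t^2 - t^m\Delta$. The natural weight is $(1+|\phi_m^2(t)-|x|^2|)^{\gamma}t^{\alpha/(p+1)}$, exactly as in \eqref{equ:1.3}, since $\phi_m(t)=\frac{2}{m+2}t^{(m+2)/2}$ measures the "Tricomi time" along which waves propagate. The first step is to recall the explicit representation of the solution to the linear inhomogeneous problem $\partial_t^2 v - t^m\Delta v = F$, $v(T_0,x)=\varepsilon f$, $\partial_t v(T_0,x)=\varepsilon g$, via the Fourier integral operators (with the confluent-hypergeometric kernels) developed in the earlier works \cite{HWY1,HWY2,HWY3} and \cite{HLYin}; this writes $u = u_{\mathrm{lin}} + L[t^\alpha|u|^p]$ where $L$ is the Duhamel-type solution operator for the Tricomi wave.

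The core analytic input is a family of \emph{weighted Strichartz estimates} for both the homogeneous evolution and the operator $L$: concretely, one needs
\[
\big\|(1+|\phi_m^2(t)-|x|^2|)^{\gamma}t^{\frac{\alpha}{p+1}}u_{\mathrm{lin}}\big\|_{L^{p+1}([T_0,\infty)\times\mathbb{R}^n)} \lesssim \varepsilon,
\]
using the compact support and smoothness of $(f,g)$, together with the dual/retarded estimate
\[
\big\|(1+|\phi_m^2(t)-|x|^2|)^{\gamma}t^{\frac{\alpha}{p+1}}L[F]\big\|_{L^{p+1}} \lesssim \big\|(1+|\phi_m^2(t)-|x|^2|)^{(p+1)\gamma'} t^{?}F\big\|_{L^{(p+1)/p}}
\]
for a conjugate weight exponent $\gamma'$. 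These are the Tricomi analogues of the Georgiev–Lindblad–Sogge weighted Strichartz estimates for the wave equation; they are proved by writing $L$ as a Fourier integral operator, splitting into dyadic frequency pieces, using stationary phase together with the explicit asymptotics of the kernel near and away from the characteristic cone $\phi_m^2(t)=|x|^2$, and interpolating. The admissible range \eqref{equ:1.4} for $\gamma$ is precisely what makes both the linear estimate and the Hölder-closed nonlinear estimate consistent: the lower bound $\gamma>\frac{1}{p(p+1)}$ ensures integrability of the weight against the nonlinearity after applying Hölder with exponents $p+1$ and $(p+1)/p$, while the upper bound $\gamma<\frac{n}{2}-\frac{1}{m+2}-\frac{1}{p+1}(n+\frac{2\alpha-m}{m+2})$ is the decay/dispersion threshold coming from the FIO estimate, and the condition $p\le p_{\mathrm{conf}}(n,m,\alpha)$ guarantees this interval is nonempty (at $p=p_{\mathrm{conf}}$ the two bounds coincide), while $p>p_{\mathrm{crit}}(n,m,\alpha)=p_2(n,m,\alpha)$ is what lets $\gamma$ be chosen strictly above $\frac{1}{p(p+1)}$ compatibly with the upper bound.

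With the estimates in hand, one closes the argument in the standard way: define the map $\mathcal{N}(u) = u_{\mathrm{lin}} + L[t^\alpha|u|^p]$ on the ball $\{\|w\|_X \le 2C_0\varepsilon\}$ in the weighted space $X$; the key nonlinear estimate is
\[
\big\|L[t^\alpha|u|^p]\big\|_X \lesssim \big\|t^{\frac{\alpha}{p+1}}(1+|\phi_m^2-|x|^2|)^{\gamma}u\big\|_{L^{p+1}}^p = \|u\|_X^p,
\]
obtained by combining the retarded weighted Strichartz estimate with Hölder's inequality, where the precise bookkeeping of the powers of $t$ and of the weight against $|u|^p$ forces the two inequalities in \eqref{equ:1.4}; then for $\varepsilon$ small, $\mathcal{N}$ maps the ball into itself and is a contraction (using $||u|^p-|v|^p|\lesssim(|u|^{p-1}+|v|^{p-1})|u-v|$ and the same Hölder scheme). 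The fixed point is the desired global weak solution, and one checks it solves \eqref{YH-4} in the distributional sense by the standard density argument. The main obstacle I anticipate is establishing the weighted Strichartz estimates for the Tricomi FIO at the endpoint $p=p_{\mathrm{conf}}$ and near the characteristic cone: one must control the operator uniformly across the cone crossing where the kernel's oscillation degenerates, handle the low-frequency (non-oscillatory) part separately since $\alpha$ and $m$ enter the scaling, and track the $t$-weights $t^{\alpha/(p+1)}$ through the dyadic decomposition — this is where the restriction $-1<\alpha\le m$ is genuinely used to keep the relevant integrals convergent and the interpolation exponents admissible.
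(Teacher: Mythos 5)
Your overall strategy---set up a weighted $L^{p+1}$ space with a characteristic weight of the form $\big((\phi_m(t)+M)^2-|x|^2\big)^{\gamma}t^{\alpha/(p+1)}$, invoke a homogeneous weighted Strichartz estimate (the paper's Lemma 2.1) and an inhomogeneous one (Theorem 1.4, itself proved via FIO representation, dyadic decomposition, stationary phase and interpolation), then close by Picard iteration / contraction mapping using H\"older to absorb $|u|^p$---is precisely the route the paper takes in Section 5, and at this level of abstraction your proposal is sound. There is, however, a genuine error in your bookkeeping of the exponent constraints. You write that ``at $p=p_{\mathrm{conf}}$ the two bounds coincide,'' and that $p\le p_{\mathrm{conf}}$ is what makes the $\gamma$-interval nonempty. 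That is backwards: clearing denominators in $\frac{1}{p(p+1)}=\frac{n}{2}-\frac{1}{m+2}-\frac{1}{p+1}\big(n+\frac{2\alpha-m}{m+2}\big)$ reproduces exactly the quadratic \eqref{YH-7}, whose positive root is $p_2(n,m,\alpha)=p_{\mathrm{crit}}(n,m,\alpha)$; hence the two bounds on $\gamma$ coincide at $p=p_{\mathrm{crit}}$, and it is the condition $p>p_{\mathrm{crit}}$ that makes \eqref{equ:1.4} a nonempty open interval. The restriction $p\le p_{\mathrm{conf}}$ has a different origin: the inhomogeneous weighted Strichartz estimate (Theorem 1.3/1.4) is only proved for $2\le q\le q_0=\frac{2((m+2)n+2+2\alpha)}{(m+2)n-2}$, and since $q=p+1$, the condition $q\le q_0$ is equivalent to $p\le p_{\mathrm{conf}}$. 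At $p=p_{\mathrm{conf}}$ one has $q=q_0$, the upper bound in \eqref{equ:1.4} equals $1/q_0$, and the lower bound equals $1/(p\,q_0)<1/q_0$, so the range for $\gamma$ remains strictly nonempty---it does not degenerate. You should fix this attribution, since it governs how you would argue that the conditions of Theorem 1.4 (namely $\gamma_1<\frac{n}{2}-\frac{1}{m+2}-\frac{1}{q}(n+\frac{2\alpha-m}{m+2})$ and $\gamma_2>\frac{1}{q}$, taken with $\gamma_1=\gamma$, $\gamma_2=p\gamma$, $q=p+1$) can be met simultaneously.
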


\begin{remark}
	Combing Theorem 1.2 together with the blowup result in Palmieri and Reissig \cite[Theorem 1.1]{PR}, the blowup vs global existence problem for the regular Cauchy problem of the semilinear Tricomi equation \eqref{YH-4} has been solved for \(n\geq2\).
\end{remark}

\subsection{Proof of the main theorem}
Take the results in Theorem~\ref{thm:global existence} as granted, we can prove the main theorem.
\paragraph{Case I \(\mathbf{ 0<\mu<1}\)}
For any fixed \(\mu\in(0,1)\), there exists unique \(m\in(0,\infty)\) such that $\mu=\f{m}{m+2}$.
Denote $\f{2}{m+2}t^{\f{m+2}{2}}$ as $\tilde{t}$, then for $t\ge T_0$,
\eqref{YH-4} is equivalent to
\begin{equation}\label{equ:mp}
\left\{ \enspace
\begin{aligned}
&\p_{\tilde{t}}^2 u-\Delta u+\f{m}{(m+2)\tilde{t}}\,\p_{\tilde{t}}u=\tilde{t}^{\frac{2(\alpha-m)}{m+2}}|u|^p, \\
&u(1,x)=u_0(x), \quad \partial_{\tilde{t}} u(1,x)=u_1(x),
\end{aligned}
\right.
\end{equation}
Then by choosing $\al=m$, we see that \eqref{equ:mp} is equivalent to \eqref{equ:eff1} with \(\mu=\frac{m}{m+2}\).
The conditon $\alpha=m=\f{2\mu}{1-\mu}$ implies that \eqref{YH-7} is equivalent to
\[(n+\mu-1)p^2-(n+\mu+1)p-2=0,\]
thus \(p_{\crit}(n,m,\alpha)=p_{\crit}(n,\mu)\). In addition,
\begin{equation*}	p_{\conf}(n,m,\alpha)=\frac{(\f{2\mu}{1-\mu}+2)n+4\f{2\mu}{1-\mu}+6}{(\f{2\mu}{1-\mu}+2)n-2}=\frac{n+\mu+3}{n+\mu-1}=p_{\conf}(n,\mu).
\end{equation*}
Thus Theorem~\ref{YH-1} follows immediately from Theorem~\ref{thm:global existence} provided \(\mu\in(0,1)\).

\paragraph{Case II \(\mathbf{ 1<\mu<2}\)}
For \(n\geq3\) and \(\mu\in(1,2)\), we intend to establish global existence for \eqref{YH-4} for each \(p\in(p_{\crit}(n,\mu), p_{\conf}(n,\mu)]\).  To this aim, let
\[v(t,x)=(1+t)^{\mu-1}u(t,x).\]
Then the equation in \eqref{equ:eff1} can be written as
\begin{equation}\label{equ:shift1}
	\partial_t^2 v-\Delta v +\f{2-\mu}{1+t}\,\p_tv=(1+t)^{(p-1)(1-\mu)}|v|^p.
\end{equation}
Comparing \eqref{equ:shift1} with \eqref{equ:mp}, one can see that the global existence result of \eqref{equ:eff1}
can be derived from \eqref{equ:mp} for \(\mu=2-\frac{m}{m+2}\) and \(\frac{2(\alpha-m)}{m+2}=(p-1)(1-\mu)\).
This leads to the following choice of \(\alpha\) for $\mu\in(1,2)$,
\begin{equation}\label{equ:lbp1}
	\alpha=1+m-p.
\end{equation}
Hence in order to prove Theorem 1.1 for \(\mu\in(1,2)\), we must guarantee the global existence result in Theorem 1.2 is applicable to \(m=\frac{2(2-\mu)}{\mu-1}\) and every \(\alpha\) satisfying
\[1+m-p_{\conf}(n,\mu)\leq\alpha<1+m-p_{\crit}(n,\mu).\]
Since \(p_{\crit}(n,\mu)>1\), we have \(1+m-p_{\crit}(n,\mu)<m\). On the other hand, by \(\mu=2-\frac{m}{m+2}\),
\begin{equation}
	p_{\conf}(n,\mu)=1+\frac{4}{n+\mu-1}=1+\frac{4(m+2)}{(m+2)n+2}.
\end{equation}
Thus \(1+m-p_{\conf}(n,\mu)=m-\frac{4(m+2)}{(m+2)n+2}\), a direct computation shows that
\(m-\frac{4(m+2)}{(m+2)n+2}>-1\) for \(n\geq3\), therefore
\[\big[1+m-p_{\conf}(n,\mu),1+m-p_{\crit}(n,\mu)\big)\subseteq(-1,m), \quad\text{for}\quad n\geq3, \]
and Theorem~\ref{thm:global existence} implies Theorem~\ref{YH-1} for \(\mu\in(1,2)\).
\subsection{Some remarks}
\begin{remark}
	For the case \(n=2\), if \(\mu\in(0,1)\cup(1, 1+\frac{2}{\sqrt{2}+1})\), the method in Section 1.2 can also imply global existence for \(p_{crit}(2,\mu)<p< p_{conf}(2,\mu)\). However, if \(1+\frac{2}{\sqrt{2}+1}\leq\mu<2\) and \(\frac{2}{\mu-1}\leq p<p_{\conf}(2,\mu)\), the global existence for \eqref{equ:eff-1} is related the global existence for \eqref{YH-4} with \(-\frac{4}{3}<\alpha\leq-1\). Unfortunately, in this range of \(\alpha\), we have \(p_{\crit}(n,m,\alpha)=p_1(n,m,\alpha)=1+\frac{2(2+\alpha)}{(m+2)n-2}\) and the homogeneous Strichartz estimate Lemma 2.1 is no longer valid and the method of this article is not applicable. Recently, Yin and Li in \cite{LY-2} solved this gap by establishing angular mixed-norm Strichartz-type equation for semilinear Tricomi equation, then the small global existence for \eqref{equ:eff-1} with \(n=2\) and \(\mu\in(0,1)\cup(1,2)\) is obtained in \cite{LY-2}.
\end{remark}
\begin{remark}\label{JY-3}
{\it For $\mu=1$, the equation in \eqref{equ:eff-1} can not be transformed to Tricomi equation, and we will treat \(\mu=1\) in our future work with different method.}
\end{remark}

\subsection{Sketch for the proof of Theorem 1.2}
We now sketch some of the ideas in the proof of Theorem 1.2.
To prove Theorem 1.2, motivated by \cite{Gls1, Gls2}, where some weighted Strichartz estimates with the characteristical weight $1+|t^2-|x|^2|$ were obtained for the linear wave operator
$\p_t^2-\triangle$, we need to establish some Strichartz estimates with the weight
$\big((\phi_m(t)+M)^2-|x|^2\big)^\gamma t^\beta $  for the generalized
Tricomi operator $\p_t^2-t^m\Delta$ with suitable index \(\beta\) and \(\gamma\). As we will see in Lemma 2.1, it is not difficult to get the linear homogeneous Strichatz estimate once we have the pointwise decay estiamtes.

In next step, we turn to establish weighted Strichartz estimates for the linear inhomogeneous problem:
\begin{equation}
\begin{cases}
&\partial_t^2 w-t^m\triangle w=F(t,x), \\
&w(T_0,x)=0,\quad \partial_tw(T_0,x)=0,
\end{cases}
\label{Y-3}
\end{equation}
which is the most difficult part in this article. The main result is the following:

\begin{theorem}\label{thm:inhomogeneous estimate}
Let \(n\geq2\), \(m>0\) and \(-1<\alpha\leq m\). For problem \eqref{Y-3}, if $F(t,x)\equiv0$ when
$|x|>\phi_m(t)-1$, then there exist some constants $\gamma_1$ and $\gamma_2$ satisfying $\gamma_1
<\frac{n}{2}-\frac{1}{m+2}-\frac{1}{q}( n+\frac{2\alpha-m}{m+2})$
and $\gamma_2>\frac{1}{q}$, such that
\begin{equation}
\begin{split}
\lp\big(\phi^2_m(t)-|x|^2\big)^{\gamma_1}t^{\frac{\alpha}{q}}w\rp_{L^q([T_0, \infty)\times \mathbb{R}^{n}))}\leq C
\lp\big(\phi^2_m(t)-|x|^2\big)^{\gamma_2}t^{-\frac{\alpha}{q}}F\rp_{L^{\frac{q}{q-1}}([T_0, \infty)\times \mathbb{R}^{n}))},
\end{split}
\label{equ:3.2}
\end{equation}
where $2\leq q\leq\f{2((m+2)n+2+2\alpha )}{(m+2)n-2}$,
and $C>0$ is a constant  depending on $n$, $m$, \(\alpha\), $q$, $\g_1$ and $\gamma_2$.
\end{theorem}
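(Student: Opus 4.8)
The plan is to reduce the weighted $L^q$-$L^{q'}$ inhomogeneous estimate \eqref{equ:3.2} to a bilinear (duality) estimate and then to a careful analysis of the fundamental solution of $\partial_t^2 - t^m\Delta$. First I would use Duhamel's principle to write the solution of \eqref{Y-3} as $w(t,x) = \int_{T_0}^t V(t,s)F(s,\cdot)(x)\,ds$, where $V(t,s)$ is the forward propagator sending data at time $s$ to the solution at time $t$ of the homogeneous equation. Testing against a function $G$ in the dual space $L^{q/(q-1)}$ with the conjugate weight, the claim \eqref{equ:3.2} becomes a bound on the bilinear form $\iint \langle w, G\rangle$, which by Fubini factors through the kernel of $V(t,s)$. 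The strategy, following the Georgiev--Lindblad--Sogge scheme for the wave equation adapted to the Tricomi setting (as in the earlier papers \cite{HWY1,HWY2,HWY3} of the authors), is to split the $(s,t)$-integration into a diagonal region $|t-s|\lesssim$ (something) and an off-diagonal region, and to interpolate between an $L^2$-based energy estimate near the diagonal and a dispersive/pointwise-decay estimate off the diagonal.

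The key technical input is a representation of $V(t,s)$ as a Fourier integral operator with explicit phase built from $\phi_m(t)\pm\phi_m(s)$ — this is exactly the structure that makes the weight $\phi_m^2(t)-|x|^2$ the natural analogue of $t^2-|x|^2$. I would invoke the stationary-phase / oscillatory-integral estimates for this FIO (the pointwise decay estimates alluded to just before the statement of Theorem~\ref{thm:inhomogeneous estimate}, which give the homogeneous Strichartz bound of Lemma~2.1) to get, for each fixed pair $(t,s)$, an $L^{q'}\to L^q$ bound on $V(t,s)$ with a quantitative decay factor in an appropriate power of $\phi_m(t)-\phi_m(s)$ and in the weights. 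Then, after peeling off the weights $t^{\alpha/q}$, $s^{-\alpha/q}$ and the characteristic weights $(\phi_m^2(t)-|x|^2)^{\gamma_1}$, $(\phi_m^2(s)-|y|^2)^{\gamma_2}$ — here the constraint $-1<\alpha\le m$ is what keeps the resulting temporal weights integrable — the $(t,s)$ double integral is controlled by a Hardy--Littlewood--Sobolev-type inequality in the $\phi_m$ variable. The admissible range $2\le q\le \frac{2((m+2)n+2+2\alpha)}{(m+2)n-2}$ and the asymmetric conditions $\gamma_1 < \frac{n}{2}-\frac{1}{m+2}-\frac{1}{q}(n+\frac{2\alpha-m}{m+2})$, $\gamma_2 > \frac1q$ are precisely the conditions under which the HLS exponents close and the weight powers are balanced on both sides.

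The main obstacle, and the part requiring the most care, is the off-diagonal FIO estimate with the characteristic weights inserted: one must show that near the light cone $\{|x| = \phi_m(t)-1\}$, where $\phi_m^2(t)-|x|^2$ degenerates, the loss in the weight $(\phi_m^2(t)-|x|^2)^{\gamma_1}$ on the left is compensated by the gain from the oscillation of the FIO phase together with the positive weight $(\phi_m^2(s)-|y|^2)^{\gamma_2}$ on the right — this is where the inequalities on $\gamma_1,\gamma_2$ are genuinely used and cannot be made symmetric. A secondary difficulty is uniformity: the constants coming from stationary phase depend on how close $\phi_m(t)/\phi_m(s)$ is to $1$, so the diagonal region must be handled separately by the energy method (differentiating in $t$, multiplying by a weighted $\partial_t w$, and using that $m>0$ makes the coefficient $t^m$ of the Laplacian monotone), and the two regimes glued together. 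I also expect to need a dyadic decomposition in the quantity $\phi_m(t)-\phi_m(s)$ and a separate treatment of the region where $|y|$ is comparable to $M$ (the support radius of the data), so that the finite-speed-of-propagation hypothesis $F\equiv 0$ for $|x|>\phi_m(t)-1$ can be propagated and the weights never actually vanish on the support. Once these pieces are assembled, summing the dyadic pieces and optimizing gives \eqref{equ:3.2}.
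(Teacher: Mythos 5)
Your proposal assembles many of the right tools -- FIO representation of the propagator, stationary-phase decay, Hardy--Littlewood--Sobolev, dyadic decomposition -- but the organizing principle is different from the paper's and, as stated, has a gap that would likely stop you from closing the argument.

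The paper's first move, which is structurally essential, is to invoke Stein's analytic interpolation theorem to reduce \eqref{equ:3.2} for the whole range $2\le q\le q_0$ to exactly two endpoint estimates, one at $q=q_0=\frac{2((m+2)n+2+2\alpha)}{(m+2)n-2}$ and one at $q=2$. These two endpoints are then proved by genuinely different methods (Section 3 and Section 4 respectively), and the $L^2$ endpoint in particular hinges on an asymmetric replacement of $(\phi_m^2(t)-|x|^2)^\beta$ by $(\phi_m(t)+|x|)^{\beta_1}(\phi_m(t)-|x|)^{\beta_2}$ -- a trick your proposal never contemplates and which is flagged in the introduction as the main innovation allowing $n=2$. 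Your plan to run a single unified bilinear/HLS argument for all $q$ and then ``balance the weight powers'' hides this distinction, and there is no clear reason to believe that a one-pass argument would produce both sharp endpoints, since they are qualitatively different (one is a classical Strichartz-type endpoint, the other an $L^2$ estimate needing an extra degree of freedom in the weight split).

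The more serious gap is in the decomposition scheme. You propose to split the $(s,t)$-integration into a diagonal region $|t-s|\lesssim \cdot$ and an off-diagonal region, handling the diagonal by an energy method. The paper's decomposition is not primarily in $t-s$; after a time-rescaling it is in the distance to the light cone, i.e.\ in the sizes $\delta$ and $\delta_0$ of $\phi_m(t)-|x|$ and $\phi_m(s)-|y|$ (see the discussion leading to \eqref{equ:4.39} and \eqref{equ:D-tx}). It is the case ``medium $\delta$, large time'' that requires the analytic family $\mathcal{T}_z$ of Fourier integral operators and the complex interpolation of \eqref{equ:4.59}--\eqref{equ:4.60}. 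Your proposed diagonal/off-diagonal split in time does not isolate this regime; indeed the interesting degeneration happens at $t-s$ large (large time) and $\phi_m(t)-|x|$ of intermediate size, which your scheme lumps together with the generic off-diagonal case. The ``energy method near the diagonal'' (differentiating in $t$, multiplying by a weighted $\partial_t w$) does not appear in the paper and does not obviously produce a bound of the form \eqref{equ:3.2} with the characteristic weight on both sides; the paper instead handles small times by the unweighted Strichartz estimates of Lemmas~2.3 and~2.4 (which themselves require a nontrivial extension beyond the range of the earlier work \cite{HLYin}, as explained in Remark 2.1), and that step is a prerequisite your proposal doesn't identify.

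In short: your tools are plausible but your reduction scheme is mis-targeted. The missing ingredients are (i) Stein analytic interpolation to the two endpoint values of $q$, (ii) dyadic decomposition in the light-cone distance $\phi_m(t)-|x|$ rather than in $t-s$, (iii) the analytic family $\mathcal{T}_z$/$\tilde{\mathcal{T}}_z$ and complex interpolation for the medium-$\delta$, large-time regime, and (iv) for $q=2$, the asymmetric split of the weight into powers of $\phi_m(t)+|x|$ and $\phi_m(t)-|x|$.
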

Based on Theorem~\ref{thm:inhomogeneous estimate}, repeating the proof of Theorem 1.4 in \cite{HWY4} we can prove the following modified version of \eqref{equ:3.2}.
\begin{theorem}\label{thm:useful estimate}
Let \(n\geq2\), \(m>0\) and \(-1<\alpha\leq m\). For problem \eqref{Y-3}, if $F(t,x)\equiv0$ when
$|x|>\phi_m(t)+M-1$, then for \(\gamma_1\) and \(\gamma_2\) in Theorem 1.3,

\begin{equation}
\begin{split}
\Big\|\Big(\big(\phi_m(t)+M\big)^2-|x|^2\Big)^{\gamma_1}&t^{\frac{\alpha}{q}}w\Big\|_{L^q([T_0, \infty)\times \mathbb{R}^{n})} \\
& \leq C
\Big\|\Big(\big(\phi_m(t)+M\big)^2-|x|^2\Big)^{\gamma_2}t^{-\frac{\alpha}{q}}F\Big\|_{L^{\frac{q}{q-1}}([T_0, \infty)\times \mathbb{R}^{n})},
\end{split}
\label{equ:3.5}
\end{equation}
where $2\leq q\leq\f{2((m+2)n+2+2\alpha )}{(m+2)n-2}$, and $C>0$ is a constant  depending on $n$, $m$, \(\alpha\), $q$, $\g_1$, $\gamma_2$ and \(M\).
\end{theorem}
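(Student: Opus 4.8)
The plan is to deduce Theorem~\ref{thm:useful estimate} from Theorem~\ref{thm:inhomogeneous estimate} by a finite decomposition of the source $F$, exploiting that $\p_t^2-t^m\Delta$ commutes with spatial translations and has finite propagation speed (a disturbance emitted at $(s,y)$ influences $(t,x)$ only if $|x-y|\le\phi_m(t)-\phi_m(s)$). First I would split off a bounded time slab. Fix $T_1>T_0$ with $\phi_m(T_1)$ large relative to $M$, and write $F=F_{\mathrm{low}}+F_{\mathrm{high}}$, where $F_{\mathrm{low}}=F$ for $t\le T_1$ and $F_{\mathrm{high}}=F$ for $t>T_1$, with the corresponding splitting $w=w_{\mathrm{low}}+w_{\mathrm{high}}$. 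On $[T_0,T_1]\times\R^{n}$ everything is supported in a fixed ball, the weight $\big((\phi_m(t)+M)^2-|x|^2\big)$ and the powers $t^{\pm\alpha/q}$ are comparable to $1$, so there \eqref{equ:3.5} reduces to $\|w\|_{L^q}\le C\|F\|_{L^{q/(q-1)}}$, a consequence of the standard local energy/Strichartz theory for $\p_t^2-t^m\Delta$; and on $[T_1,\infty)$ the function $w_{\mathrm{low}}$ solves the homogeneous equation with compactly supported Cauchy data at $t=T_1$ bounded in terms of $\|F_{\mathrm{low}}\|_{L^{q/(q-1)}}$ by the energy inequality, hence is controlled by the homogeneous weighted Strichartz estimate of Lemma~2.1. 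Thus it remains to estimate $w_{\mathrm{high}}$ on $[T_1,\infty)$.

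For this I would cover, uniformly in $s\ge T_1$, the support cone $\{|y|\le\phi_m(s)+M-1\}$ of $F_{\mathrm{high}}(s,\cdot)$ by a fixed finite family of balls $B(z_1,\phi_m(s)-1),\dots,B(z_N,\phi_m(s)-1)$ with $N=N(n,M)$ and $|z_j|\le C(n)M$; this is possible exactly because $\phi_m(s)-1\ge\phi_m(T_1)-1$ is large, so a ball of radius $\phi_m(s)+M-1$ can be covered by $O_{n,M}(1)$ balls of the only slightly smaller radius $\phi_m(s)-1$, with centres independent of $s$. A subordinate partition of unity gives $F_{\mathrm{high}}=\sum_{j=1}^{N}F_j$ with $\mathrm{supp}\,F_j(s,\cdot)\subseteq B(z_j,\phi_m(s)-1)\cap\{|y|\le\phi_m(s)+M-1\}$ and $|F_j|\le|F|$. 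Letting $w_j$ solve \eqref{Y-3} with source $F_j$, so that $w_{\mathrm{high}}=\sum_{j=1}^{N}w_j$, finite propagation speed gives $\mathrm{supp}\,w_j(t,\cdot)\subseteq B(z_j,\phi_m(t)-1)\cap\{|x|\le\phi_m(t)+M-1\}$. Applying Theorem~\ref{thm:inhomogeneous estimate} to the translates $w_j(t,\cdot+z_j)$ and $F_j(s,\cdot+z_j)$---legitimate since the equation is $x$-translation invariant and $F_j(s,\cdot+z_j)$ vanishes for $|y|>\phi_m(s)-1$---yields
\[
\big\|\big(\phi_m^2(t)-|x-z_j|^2\big)^{\gamma_1}t^{\frac{\alpha}{q}}w_j\big\|_{L^q([T_0,\infty)\times\R^{n})}\le C\big\|\big(\phi_m^2(t)-|x-z_j|^2\big)^{\gamma_2}t^{-\frac{\alpha}{q}}F_j\big\|_{L^{q/(q-1)}([T_0,\infty)\times\R^{n})}.
\]

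The remaining step is a purely elementary comparison of weights: on $\mathrm{supp}\,w_j$ one has $(\phi_m(t)+M)^2-|x|^2\le C_M\big(\phi_m^2(t)-|x-z_j|^2\big)$, and on $\mathrm{supp}\,F_j$ one has $\phi_m^2(s)-|y-z_j|^2\le C_M\big((\phi_m(s)+M)^2-|y|^2\big)$ (both reverse inequalities hold as well, so the sign of $\gamma_1$ is immaterial), the point being that on these truncated characteristic cones both quantities are bounded below by $1$ and differ only by the fixed factor $C_M=C_M(n,M)$. Inserting these comparisons, summing the $N$ estimates and using $|F_j|\le|F|$ to dominate each right-hand side by the full right-hand side of \eqref{equ:3.5}, one obtains \eqref{equ:3.5} with a constant $C$ that now depends additionally on $M$.

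I expect the main difficulty to lie in this last comparison, together with making the covering uniform: one must verify that the ratio of $(\phi_m(t)+M)^2-|x|^2$ to $\phi_m^2(t)-|x-z_j|^2$ remains bounded all the way up to the boundary $|x-z_j|=\phi_m(t)-1$ of the support cone---where both quantities are only of size $\phi_m(t)$ rather than $\phi_m^2(t)$---and that the constant, although it degenerates as $M\to\infty$, is finite for each fixed $M$; this is precisely what forces the choice of $T_1$ and of the covering radius $\phi_m(s)-1$. An alternative route, matching more literally the phrase ``repeating the proof of Theorem~1.4 in \cite{HWY4}'', is to rerun the Fourier integral operator and stationary phase derivation of the inhomogeneous estimate with the shifted weight $(\phi_m(t)+M)^2-|x|^2$ in place of $\phi_m^2(t)-|x|^2$ throughout; translation by the fixed constant $M$ only perturbs lower order terms in phase and amplitude and is absorbed as $\phi_m(t)\to\infty$, the bounded-time contribution again being handled by local theory.
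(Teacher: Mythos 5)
Your overall strategy — decompose $F$ spatially, use the $x$-translation invariance of $\p_t^2-t^m\Delta$ and finite propagation speed along the cones $|x-y|=\phi_m(t)-\phi_m(s)$, and reduce the shifted support condition $|y|\le\phi_m(s)+M-1$ to the condition $|y-z_j|\le\phi_m(s)-1$ treated by Theorem~\ref{thm:inhomogeneous estimate} — is the right idea, and the weight comparison you perform is correct: on the respective truncated cones both $\phi_m(t)+M-|x|$ and $\phi_m(t)-|x-z_j|$ are bounded below by fixed constants while differing by $O(M)$, so their ratio is bounded, and likewise for the sum factors. (The paper itself gives no proof of Theorem~\ref{thm:useful estimate}, deferring to \cite{HWY4}.) The covering of $B\bigl(0,\phi_m(s)+M-1\bigr)$ by finitely many $B\bigl(z_j,\phi_m(s)-1\bigr)$ with $|z_j|\lesssim M$ also works uniformly in $s$ once $\phi_m(s)$ is large, as you claim.

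There is, however, a genuine gap in your treatment of $w_{\mathrm{low}}$ on $[T_1,\infty)$. You propose to feed the Cauchy data $(w_{\mathrm{low}},\p_t w_{\mathrm{low}})(T_1,\cdot)$, ``bounded in terms of $\|F_{\mathrm{low}}\|_{L^{q/(q-1)}}$ by the energy inequality,'' into Lemma~2.1. But Lemma~2.1 requires $(f,g)\in W^{\frac{n}{2}+\frac{1}{m+2}+\delta,1}\times W^{\frac{n}{2}-\frac{1}{m+2}+\delta,1}$, i.e.\ roughly $n/2$ derivatives in $L^1$. Neither the energy inequality nor the diagonal local Strichartz estimate yields anything close to this: they give at most low‑order Sobolev control of the data (of order $1$ or $1/2$ in $L^2$-based spaces). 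Worse, since $q/(q-1)\le2$, on a bounded set one has $L^2\subset L^{q/(q-1)}$ but not conversely, so $F_{\mathrm{low}}\in L^{q/(q-1)}$ does not even yield an $L^1_tL^2_x$ bound needed to start the energy estimate. Hence this step cannot produce a constant controlled by $\|F_{\mathrm{low}}\|_{L^{q/(q-1)}}$ alone, and the chain of inequalities breaks.

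A repair consistent with the rest of your argument is to treat $F_{\mathrm{low}}$ by the \emph{same} covering and dispense with Lemma~2.1 entirely. By the scaling invariance $(t,x)\mapsto(\mu t,\mu^{(m+2)/2}x)$ of the Tricomi operator (under which $\phi_m(\mu t)=\mu^{(m+2)/2}\phi_m(t)$), a change of variables converts Theorem~\ref{thm:inhomogeneous estimate} into the analogous estimate, for any fixed $\kappa\in(0,1)$, on the time interval $[\phi_m^{-1}(\kappa),\infty)$ under the relaxed support hypothesis $F\equiv0$ for $|x|>\phi_m(t)-\kappa$; the only price is a finite $\kappa$-dependent factor in the constant. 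Extending $F$ by zero on $[\phi_m^{-1}(\kappa),T_0]$ and using that $\phi_m(s)-\kappa\ge1-\kappa>0$ is bounded away from zero for \emph{all} $s\ge T_0$, one can cover $\{|y|\le\phi_m(s)+M-1\}$ by finitely many $\{|y-z_j|\le\phi_m(s)-\kappa\}$ with a single fixed $\varepsilon$-net $\{z_j\}\subset B(0,M+1)$, $\varepsilon<1-\kappa$, uniformly in $s\ge T_0$. The split into $F_{\mathrm{low}}$ and $F_{\mathrm{high}}$ is then unnecessary, and the remainder of your argument (partition of unity, translation, weight comparison with an $(n,M,\kappa)$-dependent constant, and summation over finitely many pieces) closes the proof.
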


With Lemma 2.1 and Theorem~\ref{thm:useful estimate}, we can prove Theorem~\ref{thm:global existence} by a standard Picard iteration, the detail is given in Section 5.

It remains to give the proof of Theorem~\ref{thm:inhomogeneous estimate}. By Stein's analytic interpolation theorem (see \cite{Stein}), in order to prove \eqref{equ:3.2},
it suffices to establish \eqref{equ:3.2} for the two extreme cases of \(q=q_0=\frac{2((m+2)n+2+2\alpha )}{(m+2)n-2}\) and \(q=2\):
\begin{equation}
\begin{split}
\lp\big(\phi_m(t)^2-|x|^2\big)^{\gamma_1}t^{\frac{\alpha}{q_0}}w\rp_{L^{q_0}(\ra)}\leq C
\lp\big(\phi_m(t)^2-|x|^2\big)^{\gamma_2}t^{-\frac{\alpha}{q_0}}F\rp_{L^{\frac{q_0}{q_0-1}}(\ra)},
\end{split}
\label{equ:3.3}
\end{equation}
where $\gamma_1<\frac{1}{q_0}<\gamma_2$; and
\begin{equation}
\begin{split}
\lp\big(\phi_m(t)^2-|x|^2\big)^{\gamma_1}t^{\frac{\alpha}{2}}w\rp_{L^2(\ra)}\leq C
\lp\big(\phi_m(t)^2-|x|^2\big)^{\gamma_2}t^{-\frac{\alpha}{2}}F\rp_{L^2(\ra)}, \\
\end{split}
\label{equ:3.4.1}
\end{equation}
where $\gamma_1<-\frac{1}{2}+\frac{m-\alpha}{m+2}$ and $\gamma_2>\frac{1}{2}$.

In order to derive \eqref{equ:3.3}, we use the idea from \cite{Gls1} and split the integral
domain $\{(t,x): \phi^2_m(t)-|x|^2\leq1\}$ into some pieces, which correspond to the ``relatively small times" part and the ``relatively large times"
part respectively.

For the case of ``relatively small times", \textbf{the key point is to establish the inhomogeneous Strichartz estimates without characteristic weight at the endpoint} \(\mathbf{q=q_0}\) \textbf{for all} \(\mathbf{n\geq2}\), \(\mathbf{m>0}\) \textbf{and}
 \(\mathbf{0<\alpha\leq\frac{n}{n-1}\cdot m}\) :
\begin{equation}\label{equ:un-chara}
	\mathbf{\lp t^{\frac{\alpha}{q_0}}w\rp_{L^{q_0}(\ra)}\leq C
\lp t^{-\frac{\alpha}{q_0}}F\rp_{L^{\frac{q_0}{q_0-1}}(\ra)}}
\end{equation}
\textbf{\eqref{equ:un-chara} is an essential improvement of Lemma 2.2 in \cite{HLYin} (see Remark 2.1 for explanation in details), it also make the proof in this article more self-contained.} A local in time version of \eqref{equ:un-chara}
can also be established for all \(n\geq2\), \(m>0\) and \(-1<\alpha\leq0\) :
\begin{equation}\label{equ:un-chara-1}
	\lp t^{\frac{\alpha}{q_0}}w\rp_{L^{q_0}([T_0, \bar{T}]\times\R^n )}\leq C
\lp t^{-\frac{\alpha}{q_0}}F\rp_{L^{\frac{q_0}{q_0-1}}([T_0, \bar{T}]\times\R^n)},
\end{equation}
for any fixed large \(\bar{T}\). Application of \eqref{equ:un-chara} and \eqref{equ:un-chara-1} together with techniques of convolution type inequalities gives \eqref{equ:3.3}.

For the case of ``relatively small times", the analysis is more technical and involved. We have to divide the integral domain according to the scale of \(\phi_m(t)-|x|\), if the \(|\phi_m(t)-|x||\) is very small or very large, then \eqref{equ:3.3} follows by delicate analysis of support condition for \(w\) and \(F\). While for \(|\phi_m(t)-|x||\) with medium scale, we introduce such kinds of Fourier integral operators for $z\in\Bbb C$,
\begin{align*}
\begin{split}
(\mathcal{T}_zg)(t,x)=&\left(z-\frac{(m+2)n+2+2\alpha}{(m+2)(\alpha+2)}\right)e^{z^2}t^{\frac{\alpha}{q_0}}  \\
&\times \int_{\mathbb{R}^n}\int_{\mathbb{R}^n}
e^{i\{(x-y)\cdot\xi-[\phi_m(t)-|y|]|\xi|\}}\big(1+\phi_m(t)|\xi|\big)^{-\frac{m}{2(m+2)}}g(y)\frac{\md\xi}{|\xi|^z}\md y, \\
(\tilde{\mathcal{T}}_zg)(t,x)=&\left(z-\frac{(m+2)n+2+2\alpha}{2(\alpha +2)}\right)e^{z^2}
t^{\frac{\alpha}{q_0}}\phi_m(t)^{-\frac{m}{2(m+2)}}  \\
&\times \int_{\mathbb{R}^n}\int_{\mathbb{R}^n}e^{i\{(x-y)\cdot\xi-[\phi_m(t)-|y|]|\xi|\}}g(y)\frac{\md\xi}{|\xi|^z}\md y,
\end{split}
\label{equ:4.58}
\end{align*}
applying these Fourier integral operators according to different range of \(\alpha\) respectively, and combining with the complex interpolation methods, we ultimately obtain \eqref{equ:3.3}.

For the \(L^2\) estimate \eqref{equ:3.4.1}, the idea is similar to that of \eqref{equ:3.3}, we split again the integral
domain $\{(t,x): \phi^2_m(t)-|x|^2\leq1\}$ in the corresponding Fourier integral operators into some pieces,
which correspond to the ``relatively small times" part and the ``relatively large times" part respectively. \textbf{The main innovation for the} \(L^2\) \textbf{estimate is in the ``relatively large times" part.  We apply new technique of inequality when both the scales of} \(\mathbf{\phi_m(t)+|x|}\) \textbf{and} \(\mathbf{\phi_m(t)-|x|}\) \textbf{are large, more specifically, we replace} \(\mathbf{(\phi_m(t)^2-|x|^2)^\beta}\) \textbf{with} \(\mathbf{(\phi_m(t)+|x|)^{\beta_1}(\phi_m(t)-|x|)^{\beta_2}}\) \textbf{and let} \(\mathbf{\beta_1}\) \textbf{and} \(\mathbf{\beta_2}\) \textbf{be different indices, this modification implies more decay rate and the} \(\mathbf{L^2}\) \textbf{estimate can be improved and we are able to handle all the dimension} \(\mathbf{n\geq2}\) \textbf{in the same manner. Recall that in our former work} \cite{HWY4}, \textbf{the} \(\mathbf{L^2}\) \textbf{estimate can be established only for} \(\mathbf{n\geq3}\), \textbf{hence} \eqref{equ:3.4.1} \textbf{is another essential improvement in this article.}

This paper is organized as follows: In Section $2$, we establish some basic estimates which includes the Strichartz inequality for the linear homogeneous equation $\partial_t^2 v-t^m\triangle v=0$ and \eqref{equ:un-chara}-\eqref{equ:un-chara-1}, and introduce some necessary results related to Littlewood-Paley decomposition. In Section $3$ and Section $4$, we will show the proofs of the endpoint inequalities \eqref{equ:3.3} and \eqref{equ:3.4.1}, respectively. The proof of Theorem 1.2 is then given in Section $5$.
In addition, some elementary but important estimates used in Section $3$ and Section $4$ are discussed further in the appendix.

\section{Basic Estimates}

In this section, our purpose is to establish some basic estimates and list some necessary results.
\subsection{Homogeneous Strichartz estimates with characteristic weight}
The first one is the homogeneous Strichartz estimate, for which we consider the following linear homogeneous problem:
\begin{equation}
\partial_t^2 v-t^m\triangle v=0, \qquad
v(T_0,x)=f(x),\quad \partial_tv(T_0,x)=g(x),
\label{Y-2}
\end{equation}
where $f, g\in C_c^\infty(\mathbb{R}^n)$, and $\operatorname{supp}\ (f,g)\subseteq \{x: |x|\leq M\}$. By  pointwise estimate in \cite{HWY4} and some delicate computation of \(L^q_{t,x}\) norm, we prove the following estimate.

\begin{lemma}
\label{lem:homogeneous estimate}
Let \(n\geq2\), \(m>0\) and \(\alpha>-1\). For the solution $v$ of \eqref{Y-2}, one then has
\begin{equation}\label{equ:2.2}
\begin{split}
	\left\|\Big(\big(\phi_m(t)+M\big)^2-|x|^2\Big)^\gamma t^{\frac{\alpha}{q}}v\right\|&_{L^q([T_0,+\infty)\times\mathbb{R}^n)} \\
	&\leq C\left(\| f\|_{W^{\f{n}{2}+\f{1}{m+2}+\delta,1}(\mathbb{R}^n)}+\| g\|_{W^{\f{n}{2}-\f{1}{m+2}+\delta,1}(\mathbb{R}^n)}\right),
\end{split}
\end{equation}
where
$p_{\crit}(n,m,\alpha)+1<q\leq p_{\conf}(n,m,\alpha)+1 $,
$0<\gamma<\f{(m+2)n-2}{2(m+2)}-\f{(m+2)n+2\alpha -m}{(m+2)q}$,
$\delta>0$ is small enoughy,
and $C$ is a positive constant depending on $n$, $m$, \(\alpha\), $q$, $\gamma$, $\delta$ and \(M\).
\end{lemma}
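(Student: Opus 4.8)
The plan is to deduce \eqref{equ:2.2} from a pointwise dispersive bound for the solution of \eqref{Y-2} and then to evaluate the weighted space--time norm on the left of \eqref{equ:2.2} by hand. This is the generalized--Tricomi counterpart of the Georgiev--Lindblad--Sogge scheme for free waves \cite{Gls1}, the pointwise input being the one from \cite{HWY4}.

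\emph{Step 1: the pointwise estimate.} I would first recall the parametrix for \eqref{Y-2} from \cite{HWY4}: the solution $v$ is a finite superposition of Fourier integral operators with phases $x\cdot\xi\pm\phi_m(t)|\xi|$ whose amplitudes are symbols in $\xi$ of order $\mp\frac1{m+2}$ (more precisely, the $\pm\frac1{m+2}$ arises from the $\phi_m(t)|\xi|$ clock together with an amplitude of order $\mp\frac{m}{2(m+2)}$); this is exactly what produces the shift $\pm\frac1{m+2}$ in the Sobolev orders carried by $f$ and $g$. Since $\partial_t^2-t^m\Delta$ propagates with finite speed one has $\operatorname{supp}v(t,\cdot)\subseteq\{|x|\le\phi_m(t)+M\}$, and stationary phase in $\xi$ near the characteristic conoid $|x|=\phi_m(t)$, together with non--stationary phase off it (using the $s_1,s_2$ derivatives on the data), gives, for $|x|\le\phi_m(t)+M$,
\[
|v(t,x)|\le C\,\big(1+\phi_m(t)+|x|\big)^{-\frac{n-1}{2}}\big(1+\big|\phi_m(t)-|x|\big|\big)^{-\kappa}\big(1+\phi_m(t)\big)^{-\frac{m}{2(m+2)}}\big(\|f\|_{W^{s_1,1}(\R^n)}+\|g\|_{W^{s_2,1}(\R^n)}\big),
\]
with $s_1=\frac n2+\frac1{m+2}+\delta$, $s_2=\frac n2-\frac1{m+2}+\delta$, and $\kappa=\kappa(n,m)$ as large as the data regularity allows. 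The gain $(1+\phi_m(t))^{-\frac{m}{2(m+2)}}$ reflects the degeneracy at $t=0$: under $\tau=\phi_m(t)$ equation \eqref{Y-2} turns into a wave equation with the scale--invariant damping $\frac{m}{(m+2)\tau}\partial_\tau v$, and the Liouville substitution $w=\tau^{\frac{m}{2(m+2)}}v$ removes that term up to a harmless $\tau^{-2}$ potential, so $v$ inherits the free--wave decay times $\tau^{-\frac{m}{2(m+2)}}$.

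\emph{Step 2: computing the weighted norm.} Inserting the pointwise bound into the left side of \eqref{equ:2.2}, passing to polar coordinates $r=|x|$ and changing the time variable to $\phi=\phi_m(t)$ via $\md\phi=t^{m/2}\,\md t$, so that $t^{\alpha}\,\md t=c\,\phi^{\frac{2\alpha-m}{m+2}}\,\md\phi$ and $t\in[T_0,\infty)$ becomes $\phi\in[1,\infty)$, the matter reduces to verifying
\[
\int_{1}^{\infty}\phi^{\frac{2\alpha-m}{m+2}-\frac{m}{2(m+2)}q}\int_{0}^{\phi+M}\big[(\phi+M-r)(\phi+M+r)\big]^{\gamma q}\big(1+\phi+r\big)^{-\frac{n-1}{2}q}\big(1+|\phi-r|\big)^{-\kappa q}\,r^{n-1}\,\md r\,\md\phi<\infty .
\]
I would split the inner integral at $r=\phi/2$. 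In the conoid zone $r\in[\phi/2,\phi+M]$ one has $1+\phi+r\approx\phi+M+r\approx\phi$ and $r^{n-1}\approx\phi^{n-1}$; writing $s=\phi+M-r$ gives $|\phi-r|=|s-M|$, so the inner integral is $\approx\phi^{\gamma q-\frac{n-1}{2}q+n-1}\int_{0}^{\infty}s^{\gamma q}(1+|s-M|)^{-\kappa q}\,\md s$, which is finite near $s=0$ exactly because $\gamma>0$ (this is where the lower bound on $\gamma$ enters, controlling $v$ across the leading edge $r=\phi+M$) and at infinity because $\kappa$ is large. The remaining $\phi$--integral converges precisely when
\[
\frac{2\alpha-m}{m+2}-\frac{m}{2(m+2)}q+\gamma q-\frac{n-1}{2}q+n-1<-1 ,
\]
which rearranges into $\gamma<\frac{(m+2)n-2}{2(m+2)}-\frac{(m+2)n+2\alpha-m}{(m+2)q}$, the stated upper bound. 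In the interior zone $r\in[0,\phi/2]$ one has $1+\phi+r\approx 1+|\phi-r|\approx\phi$ and $(\phi+M-r)(\phi+M+r)\approx\phi^{2}$, so the extra decay $(1+|\phi-r|)^{-\kappa q}\approx\phi^{-\kappa q}$ makes that contribution harmless in the same range of $\gamma$. The requirement $p_{\crit}(n,m,\alpha)+1<q\le p_{\conf}(n,m,\alpha)+1$ is what keeps the interval for $\gamma$ nonempty and is the range needed in the nonlinear argument of Section~5. Tracking the constants yields the stated dependence on $n,m,\alpha,q,\gamma,\delta,M$.

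\emph{Main obstacle.} The substantive part is Step~1 — extracting from \cite{HWY4} the pointwise estimate with the sharp exponents, in particular the $\pm\frac1{m+2}$ shift in the Sobolev orders, the conformal rate $(1+\phi_m(t)+|x|)^{-\frac{n-1}{2}}$ near the conoid, and the degeneracy gain $(1+\phi_m(t))^{-\frac{m}{2(m+2)}}$ — followed by the exponent bookkeeping in the conoid zone of Step~2, where the characteristic weight $(\phi_m(t)+M)^2-|x|^2$ vanishes and $v$ decays most slowly; it is exactly this bookkeeping that pins down the admissible range of $\gamma$.
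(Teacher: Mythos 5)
Your approach is the same as the paper's: insert the pointwise dispersive estimate from \cite{HWY4} and then evaluate the weighted $L^q$ space--time norm directly in polar coordinates. Your Step~1 bound
$(1+\phi_m(t)+|x|)^{-\frac{n-1}{2}}(1+|\phi_m(t)-|x||)^{-\kappa}(1+\phi_m(t))^{-\frac{m}{2(m+2)}}$
is a slightly more structured restatement of the paper's
$(1+\phi_m(t))^{-\frac{n}{2}+\frac1{m+2}}(1+||x|-\phi_m(t)|)^{-\frac{n}{2}+\frac1{m+2}+\delta}$:
near the conoid the two agree, since $\frac{n-1}{2}+\frac{m}{2(m+2)}=\frac{n}{2}-\frac1{m+2}$, and away from it they agree once $\kappa$ is taken to be the specific finite value $\kappa=\frac n2-\frac1{m+2}-\delta$ dictated by the Sobolev orders of the data.

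The one place you are imprecise is precisely the role of $\kappa$. You say it is ``as large as the data regularity allows'' and then dispose of both the $s$-integral at infinity and the interior zone ``because $\kappa$ is large.'' But $\kappa$ is \emph{not} a free large parameter: it equals $\frac n2-\frac1{m+2}-\delta$, and convergence of $\int_0^\infty s^{\gamma q}(1+|s-M|)^{-\kappa q}\,\md s$ (and of the interior-zone $\phi$-integral) requires $\kappa q>\gamma q+1$, i.e.\ $\gamma<\frac n2-\frac1{m+2}-\delta-\frac1q$. This is \emph{not} automatic; one must check that it is implied by the stated upper bound $\gamma<\frac{(m+2)n-2}{2(m+2)}-\frac{(m+2)n+2\alpha-m}{(m+2)q}$. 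That implication holds exactly because $n\ge2$, $\alpha>-1$ and $\delta$ is small — equivalently $n+\frac{2\alpha-m}{m+2}>1$ — which is the arithmetic check the paper carries out explicitly (``$q\delta+\frac{m-2\alpha}{m+2}-n<-1$ is satisfied for $\alpha>-1$, $n\ge2$ and $\delta$ small''). You should insert that verification; otherwise the argument is complete and matches the paper's.
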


\begin{proof}
We first denote
\[A(f,g)=\left(\|f\|_{W^{\frac{n}{2}+\frac{1}{m+2}+\delta,1}(\mathbb{R}^n)}
+\|g\|_{W^{\frac{n}{2}-\frac{1}{m+2}+\delta,1}(\mathbb{R}^n)}\right),\]
then it follows from Section 2 of  \cite{HWY4} (see formula 2-20) that
the solution $v$ of \eqref{Y-2} satisfies
\[
\begin{split}
	|v|\leq & C_{m,n,\delta}(1+\phi_m(t))^{-\frac{n}{2}+\frac{1}{m+2}}(1+\big||x|-\phi_m(t)\big|)^{-\frac{n}{2}+\frac{1}{m+2}+\delta}A(f,g),
\end{split}
\]
where \(\delta>0\) is small enough. Since \(t\geq1\), we have for all \(\alpha>-1\)
\begin{equation}\label{equ:2.22}
\begin{split}
	|t^{\frac{\alpha}{q}}v|\leq & C_{m,n,\delta}\phi_m(t)^{-\frac{n}{2}+\frac{1}{m+2}+\frac{2\alpha}{q(m+2)}}(1+\big||x|-\phi_m(t)\big|)^{-\frac{n}{2}+\frac{1}{m+2}+\delta}A(f,g).
\end{split}
\end{equation}

Then we can compute the integral in the left hand side of \eqref{equ:2.2} by \eqref{equ:2.22} and the polar coordinate transformation.
then one can calculate
\begin{equation}\label{equ:2.23}
\begin{split}
&\Big\|\big((\phi_m(t)+M)^2-|x|^2\big)^\gamma t^{\frac{\alpha}{q}}v\Big\|_{L^q(\ra)}^q \\
&\le C_{m,n,\delta}A(f,g)\int_{T_0}^\infty\int_{|x|\leq \phi_m(t)+M}
\bigg\{\Big(\big(\phi_m(t)+M\big)^2-|x|^2\Big)^\gamma \\
&\qquad\qquad \qquad
\times\phi_m(t)^{-\frac{n}{2}+\frac{1}{m+2}+\frac{2\alpha}{q(m+2)}}\Big(1+\big||x|-\phi_m(t)\big|\Big)^{-\frac{n}{2}+\frac{1}{m+2}+\delta}\bigg\}^q dxdt
\\
&\leq C_{m,n,\delta}A(f,g)
\int_{T_0}^\infty\int_0^{\phi_m(t)+M}\Big\{\big(\phi_m(t)+M+r\big)^\gamma\big(\phi_m(t)+M-r\big)^\gamma \\
&\qquad\qquad\qquad\times\phi_m(t)^{-\frac{n}{2}+\frac{1}{m+2}+\frac{2\alpha}{q(m+2)}}
\big(1+|r-\phi_m(t)|\big)^{-\frac{n}{2}+\frac{1}{m+2}+\delta}\Big\}^q r^{n-1}drdt
\\
&\leq C_{m,n,\delta}A(f,g)
 \\
&\times\int_{T_0}^\infty\phi_m(t)^{q(-\frac{n}{2}+\frac{1}{m+2}
+\frac{2\alpha}{q(m+2)}+\gamma)}
\int_0^{\phi_m(t)+M}\big(1+|r-\phi_m(t)|\big)^{q(\gamma-\frac{n}{2}
+\frac{1}{m+2}+\delta)}r^{n-1}drdt.
\end{split}
\end{equation}

Notice that by our assumption, $\gamma-\frac{n}{2}+\frac{1}{m+2}+\frac{2\alpha}{q(m+2)}<(\frac{m}{m+2}-n)\frac{1}{q}$ holds.
Then we can choose a constant $\sigma>0$ such that
\[\gamma-\frac{n}{2}+\frac{1}{m+2}+\frac{2\alpha}{q(m+2)}<\Big(\frac{m}{m+2}-n\Big)\frac{1}{q}-\sigma.\]
Furthermore, we have
\[\Big(\gamma-\frac{n}{2}+\frac{1}{m+2}+\delta \Big)q<q\delta+ \frac{m-2\alpha}{m+2}-n.\]
In order to keep the integral in \eqref{equ:2.23} bounded, we need \(q\delta+\frac{m-2\alpha}{m+2}-n<-1\),
which is satisfied for \(\alpha>-1\), \(n\geq2\) and sufficiently small \(\delta\).
Then for some positive constant $\bar{\sigma}>0$, the integral in the last line of \eqref{equ:2.23} can be controlled by
\begin{equation*}
\begin{split}
&\int_{T_0}^\infty\int_0^{\phi_m(t)+M}\phi_m(t)^{\frac{m}{m+2}-n-\bar{\sigma}}
\big(1+\big|r-\phi_m(t)\big|\big)^{-1-\bar{\sigma}}r^{n-1}drdt \\
&\leq C\int_{T_0}^\infty\phi_m(t)^{\frac{m}{m+2}-n-\bar{\sigma}}\big(1+\phi_m(t)\big)^{n-1}dt
\leq C.
\end{split}
\end{equation*}
This, together with \eqref{equ:2.23}, yields \eqref{equ:2.2}.
\end{proof}
\subsection{Inhomogeneous Strichartz estimates without characteristic weight}
Now we turn to give inhomogeneous Strichartz estimate at \(q=q_0\) without characteristic weight, which is a key step in the proof of inhomogeneous Strichartz estimates with characteristic weight. To begin with, we introduce a result of dyadic decomposition from Lemma 3.8 of \cite{Gls1}.
\begin{lemma}\label{lem:a5}
Assume that \(\chi\in C^\infty_c(\R)\) with
\begin{equation}\label{equ:3.23-1}
\textup{supp}\chi\subseteq\left(\frac{1}{2} ,2\right),\quad  \ds \sum\limits_{j=-\infty}^\infty\chi\left(2^{-j}\tau\right) \equiv1\quad \text{for} \quad \tau>0.
\end{equation}
Define the Littlewood-Paley operators of function $G$ as follows
\[G_j(t,x)=(2\pi)^{-n}\int_{\mathbb{R}^n}e^{ix\cdot\xi}\chi\left(\frac{|\xi|}{2^j}\right)\hat{G}(t,\xi)d\xi.\]
Then one has that
\begin{align*}
\parallel G\parallel_{L^s_tL^q_x}\leq C\lc \sum\limits_{j=-\infty}^{\infty}\parallel G_j\parallel^2_{L^s_tL^q_x}\rc^{\frac{1}{2}}
\qquad \text{for\quad $2\leq q<\infty$ and $2\leq s \leq \infty$} \\
\end{align*}
and
\begin{align*}
\lc\sum\limits_{j=-\infty}^{\infty}\parallel G_j\parallel^2_{L^r_tL^p_x}\rc^{\frac{1}{2}}\leq C\parallel G\parallel_{L^r_tL^p_x}
\qquad \text{for\quad $1<p\leq2$\quad and \quad $1\leq r \leq 2$}.
\end{align*}
\end{lemma}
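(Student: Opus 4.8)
The plan is to deduce both estimates from two standard facts of Littlewood--Paley theory on $\mathbb{R}^n$: the square-function estimate
\[
\|H\|_{L^q(\mathbb{R}^n)}\le C_{n,q}\Big\|\Big(\sum_j|\Delta_j H|^2\Big)^{1/2}\Big\|_{L^q(\mathbb{R}^n)},\qquad 1<q<\infty,
\]
where $\Delta_j$ is the Fourier multiplier $\chi(|\xi|/2^j)$ (so $G_j=\Delta_j G$), and its companion ``reverse'' estimate for finitely overlapping frequency annuli: if $\widehat{u_j}$ is supported in $\{2^{j-1}\le|\xi|\le 2^{j+1}\}$, then $\big\|\sum_j u_j\big\|_{L^q(\mathbb{R}^n)}\le C_{n,q}\big\|(\sum_j|u_j|^2)^{1/2}\big\|_{L^q(\mathbb{R}^n)}$ for $1<q<\infty$ --- which follows from the first estimate together with the Fefferman--Stein vector-valued maximal inequality, since $\Delta_k(\sum_j u_j)$ involves only the $O(1)$ indices $j$ with $|j-k|\le C$ and $|\Delta_k u_j|\lesssim M u_j$. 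I will also use that each $\Delta_j$ is bounded on $L^q(\mathbb{R}^n)$ uniformly in $j$ for every $1\le q\le\infty$ (it is convolution with an $L^1$-normalised kernel), and, repeatedly, the triangle inequality in $L^\rho$ for $\rho\ge 1$.

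For the first inequality ($2\le q<\infty$, $2\le s\le\infty$): fix $t$, apply the square-function estimate in $x$, and pull the sum outside by the triangle inequality in $L^{q/2}_x$ (legitimate since $q/2\ge 1$), obtaining $\|G(t,\cdot)\|_{L^q_x}\le C\big(\sum_j\|G_j(t,\cdot)\|_{L^q_x}^2\big)^{1/2}$ for a.e.\ $t$. Taking the $L^s_t$ norm and repeating the same step in $t$ --- the triangle inequality in $L^{s/2}_t$ when $s<\infty$, and $\sup_t\sum_j a_j(t)\le\sum_j\sup_t a_j(t)$ (all $a_j\ge0$) when $s=\infty$ --- gives $\|G\|_{L^s_tL^q_x}\le C\big(\sum_j\|G_j\|_{L^s_tL^q_x}^2\big)^{1/2}$.

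For the second inequality ($1<p\le 2$, $1\le r\le 2$) I would argue by duality. Since $1<p<\infty$ and $1\le r<\infty$, $\ell^2_jL^{r'}_tL^{p'}_x$ is the dual of $\ell^2_jL^{r}_tL^{p}_x$, so it suffices to dominate $\big|\sum_j\iint G_j\overline{H_j}\,dx\,dt\big|$ by $C\|G\|_{L^r_tL^p_x}\,\|(H_j)\|_{\ell^2_jL^{r'}_tL^{p'}_x}$ for arbitrary $(H_j)$. As $\chi$ is real, $\Delta_j$ is self-adjoint, so $\sum_j\iint G_j\overline{H_j}=\iint G\,\overline{\sum_j\Delta_j H_j}$, which by H\"older is $\le\|G\|_{L^r_tL^p_x}\big\|\sum_j\Delta_j H_j\big\|_{L^{r'}_tL^{p'}_x}$. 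The last factor is handled just as in the first inequality, but now using the \emph{reverse} square-function estimate, since the $\Delta_j H_j$ have frequency support in overlapping annuli: for a.e.\ $t$,
\[
\big\|\sum_j\Delta_j H_j(t,\cdot)\big\|_{L^{p'}_x}\le C\big(\sum_j\|\Delta_j H_j(t,\cdot)\|_{L^{p'}_x}^2\big)^{1/2}\le C\big(\sum_j\|H_j(t,\cdot)\|_{L^{p'}_x}^2\big)^{1/2},
\]
using the triangle inequality in $L^{p'/2}_x$ (valid since $p\le 2$, i.e.\ $p'\ge2$) and the uniform $L^{p'}_x$-boundedness of $\Delta_j$; then taking $L^{r'}_t$ and using the triangle inequality in $L^{r'/2}_t$ (valid since $r\le 2$, i.e.\ $r'\ge 2$; the case $r'=\infty$ being again $\sup_t\sum_j\le\sum_j\sup_t$) bounds it by $C\,\|(H_j)\|_{\ell^2_jL^{r'}_tL^{p'}_x}$. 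Taking the supremum over $(H_j)$ in the unit ball finishes the proof.

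The main obstacle is the second inequality, and inside it the reverse square-function estimate for the overlapping dyadic cut-offs together with the bookkeeping that keeps every auxiliary exponent in its admissible range --- indeed the $L^{q/2},L^{s/2}$ (resp.\ $L^{p'/2},L^{r'/2}$) norms that appear are exactly what force $q,s\ge 2$ in the first estimate and $p,r\le 2$ in the second. Everything else --- the reductions to a.e.\ fixed $t$, Minkowski's inequality, and the uniform boundedness of $\Delta_j$ --- is routine; alternatively one may simply invoke Lemma~3.8 of \cite{Gls1}, which is essentially this statement.
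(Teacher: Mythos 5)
Your argument is correct, but it is genuinely different from what the paper does: the paper simply \emph{cites} this statement as Lemma 3.8 of \cite{Gls1} and gives no proof at all. You instead supply a complete, self-contained derivation from standard Littlewood--Paley theory, and the structure is sound. The first inequality follows from the forward square-function estimate $\|G(t,\cdot)\|_{L^q_x}\lesssim\|(\sum_j|G_j(t,\cdot)|^2)^{1/2}\|_{L^q_x}$ plus Minkowski in $L^{q/2}_x$ and then $L^{s/2}_t$, and these two Minkowski steps are precisely what force $q,s\ge 2$. The second inequality is obtained by duality: using that $\chi$ is real (hence each $\Delta_j$ is self-adjoint), the bilinear pairing $\sum_j\iint G_j\overline{H_j}$ collapses to $\iint G\,\overline{\sum_j\Delta_jH_j}$, and the factor $\|\sum_j\Delta_jH_j\|_{L^{r'}_tL^{p'}_x}$ is controlled by the reverse square-function bound for overlapping dyadic frequency pieces, followed by Minkowski in $L^{p'/2}_x$ and $L^{r'/2}_t$ (valid since $p\le2$, $r\le2$ give $p',r'\ge2$), and the uniform $L^{p'}_x$-boundedness of $\Delta_j$. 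The endpoint $r'=\infty$ is correctly handled by the pointwise-in-$t$ inequality $\sup_t\sum_j\le\sum_j\sup_t$. What your version buys over the paper's is transparency: one sees exactly where each exponent restriction comes from (the two Minkowski steps) and that the only Fourier-analytic inputs are the two classical square-function bounds. The only trivial caveat is that the self-adjointness of $\Delta_j$ uses that $\chi$ is real-valued; the hypotheses allow one to assume this without loss of generality, but it is worth stating.
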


In next step, we turn to handle the linear inhomogeneous problem:
\begin{equation}
\begin{cases}
&\partial_t^2 w-t^m\triangle w=F(t,x), \\
&w(T_0,x)=0,\quad \partial_tw(T_0,x)=0,
\end{cases}
\label{Y-3-1}
\end{equation}
Based on Lemma 2.2, we have the following Strichartz estimate without characteristic weight:
\begin{lemma}
	For \(q_0=\frac{2((m+2)n +2+2\alpha)}{(m+2)n-2}\)
with \(n\geq2\), \(m>0\) and \(0\leq\alpha\leq\frac{n}{n-1}\cdot m\),
	\begin{equation}
\begin{split}
\lp t^{\frac{\alpha}{q_0}}w\rp_{L^{q_0}(\ra)}
\leq C\lp t^{-\frac{\alpha}{q_0}}F\rp_{L^{\frac{q_0}{q_0-1}}(\ra)}.
\end{split}
\label{equ:H.0}
\end{equation}
\end{lemma}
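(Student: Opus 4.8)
The plan is to reduce the inhomogeneous estimate \eqref{equ:H.0} to the homogeneous decay estimate underlying Lemma~\ref{lem:homogeneous estimate} via Duhamel's principle, and to control the resulting superposition by a $TT^*$-type argument combined with the Littlewood-Paley machinery of Lemma~\ref{lem:a5}. First I would write $w$ through Duhamel as $w(t,x)=\int_{T_0}^t V(t,s)F(s,\cdot)(x)\,\md s$, where $V(t,s)h$ solves the homogeneous Tricomi equation $\p_t^2 V-t^m\Delta V=0$ with data $V(s,\cdot)=0$, $\p_tV(s,\cdot)=h$ prescribed at time $s$. The key analytic input is the frequency-localized pointwise/dispersive bound for $V(t,s)$: from the parametrix representation of $\p_t^2-t^m\Delta$ in \cite{HWY4}, a Littlewood-Paley piece $V_j(t,s)$ at frequency $|\xi|\sim 2^j$ obeys an $L^1\to L^\infty$ decay of order $\big(1+\phi_m(t)2^j\big)^{-\frac{n-1}{2}}\big(1+\phi_m(s)2^j\big)^{-\frac{n-1}{2}}$ together with the extra gain $\big(1+\phi_m(t)2^j\big)^{\frac{1}{m+2}}\big(1+\phi_m(s)2^j\big)^{\frac{1}{m+2}}$ coming from the degeneracy at $t=0$, while the $L^2\to L^2$ bound is $2^{-j}$ times a harmless factor. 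Interpolating between these gives, for each dyadic block, a fixed-frequency inhomogeneous estimate from $L^{q_0'}_{t,x}$ to $L^{q_0}_{t,x}$ with a constant that is \emph{summable} in $j$ precisely because $q_0$ is the conformal exponent.

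\textbf{Carrying it out.} The steps, in order, would be: (i) decompose $F=\sum_j F_j$ and $w=\sum_j w_j$ with the operators $G\mapsto G_j$ of Lemma~\ref{lem:a5}, reducing matters to a uniform (in $j$) bound
\[
\big\|t^{\frac{\alpha}{q_0}}w_j\big\|_{L^{q_0}_{t,x}}\leq C\,\big\|t^{-\frac{\alpha}{q_0}}F_j\big\|_{L^{q_0/(q_0-1)}_{t,x}},
\]
after which the two square-function inequalities of Lemma~\ref{lem:a5} (the first with $(s,q)=(q_0,q_0)$, the second with $(r,p)=(q_0',q_0')$, both legitimate since $q_0\geq2$) assemble the pieces and yield \eqref{equ:H.0}; (ii) rescale the $j$-th block by the natural parabolic scaling $x\mapsto 2^{j\theta}x$, $\phi_m(t)\mapsto 2^{j\theta}\phi_m(t)$ adapted to $\p_t^2-t^m\Delta$, so that $2^j$ is normalized to $1$ and the weight $t^{\pm\alpha/q_0}$ transforms into an explicit power of $2^j$; the requirement that this power be summable is exactly the identity defining $q_0=\frac{2((m+2)n+2+2\alpha)}{(m+2)n-2}$, and here the constraint $0\le\alpha\le\frac{n}{n-1}m$ enters to keep the exponent in the admissible window; (iii) for the normalized block, prove the fixed-frequency inhomogeneous estimate by the Christ–Kiselev lemma (to drop the time-ordering $s<t$) plus the $TT^*$ method, using the dispersive bound $\|V_j(t,s)h\|_\infty\lesssim (1+|\phi_m(t)-\phi_m(s)|)^{-\frac{n-1}{2}+\frac{1}{m+2}}\|h\|_1$ at unit frequency, the energy bound, and Hardy–Littlewood–Sobolev in the variable $\phi_m(t)$; (iv) undo the rescaling and sum.

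\textbf{Main obstacle.} The hardest step is (iii): obtaining the frequency-localized dispersive estimate for $V_j(t,s)$ with the correct powers $-\frac{n-1}{2}+\frac{1}{m+2}$ uniformly in $s$, and with \emph{both} endpoints of time being arbitrary (not $s=T_0$), which is more delicate than the homogeneous case of Lemma~\ref{lem:homogeneous estimate} because the phase function of the Tricomi parametrix degenerates differently near $t=0$ and $s=0$; one must track the two factors $\big(1+\phi_m(t)|\xi|\big)$ and $\big(1+\phi_m(s)|\xi|\big)$ separately and exploit stationary phase in the $(\phi_m(t)-\phi_m(s))$–variable. A secondary technical point is that the Christ–Kiselev lemma applies only off-diagonal in Lebesgue exponents, so one must first establish a non-time-ordered estimate and then invoke Christ–Kiselev since $q_0>q_0'$ at this exponent; this forces the admissible range and is why the lemma is stated for $q_0$ strictly larger than $2$ when $\alpha$ is in the stated interval. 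The local-in-time variant \eqref{equ:un-chara-1} for $-1<\alpha\le0$ would follow from the same scheme with the time integral truncated at $\bar T$, which removes the summability obstruction at the low-frequency end and hence relaxes the restriction on $\alpha$.
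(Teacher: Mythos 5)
Your overall skeleton (parametrix for $\p_t^2-t^m\Delta$, Littlewood--Paley in $\xi$ via Lemma~\ref{lem:a5}, frequency-localized $L^1\to L^\infty$ dispersive estimate interpolated with an $L^2\to L^2$ bound, Hardy--Littlewood--Sobolev in time, then summation) is indeed the framework the paper uses, and your strategy could be made to work. But you bring in machinery the paper does not need and misses the actual technical crux. Concretely: the paper never invokes Christ--Kiselev or $TT^*$. It works directly with the inhomogeneous Fourier integral representation \eqref{equ:3.23} of $w$, defines the fixed-time operator $H_{t,s}$ in \eqref{equ:3.27}, proves the $L^2$ bound \eqref{equ:3.35} and the stationary-phase $L^\infty$ bound \eqref{equ:3.36} for $t^\beta H_{t,s}$, interpolates to $L^{p_0}\to L^{q_0}$ with kernel $|t-s|^{-\frac{(m+2)n-2-4\beta}{(m+2)n+2}}$, and closes with HLS in $t$; Christ--Kiselev is unnecessary because the retarded integral can be extended to all $s$ and bounded in absolute value. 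Likewise the paper does not rescale in $j$ but keeps the $\lambda$ bookkeeping explicit.

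The more important gap in your plan is that you declare ``obtaining the frequency-localized dispersive estimate uniformly in $s$'' to be the hardest step but do not say how to resolve it. The paper's resolution is the case-split $2s\le t$ versus $2s>t$ (Cases~I and II of the proof). In Case~I the factor $t^{\beta-m/4}s^{\beta-m/4}$ is absorbed using $t-s\gtrsim t$ and $\beta>\tfrac{m}{4}$; in Case~II the mean value theorem gives $\phi_m(t)-\phi_m(s)=\eta^{m/2}(t-s)$ with $\eta\sim t$, and the constraint $\beta\le\tfrac{m}{2}\cdot\tfrac{n}{n+1}$, equivalently $\alpha\le\tfrac{n}{n-1}m$, is exactly what makes the residual power of $t$ nonpositive in \eqref{equ:3.32-1} so that it too can be traded for a power of $|t-s|$. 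Your explanation that the constraint on $\alpha$ ``keeps the exponent in the admissible window'' does not identify this mechanism. You also omit the preliminary reduction: for $0\le\alpha\le\tfrac{m}{2}\cdot\tfrac{(m+2)n+2}{(m+2)(n-1)}$ (i.e.\ $\beta\le\tfrac{m}{4}$) the statement already follows from \cite[Lemma~2.2]{HLYin}, so the new work is confined to $\tfrac{m}{4}<\beta\le\tfrac{m}{2}\cdot\tfrac{n}{n+1}$. Incorporating the $s/t$ case-split and the reduction step, and dropping the unnecessary Christ--Kiselev/$TT^*$/rescaling detours, would align your argument with the paper's.
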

\begin{remark}
	Recall that in our former work \cite[Lemma 2.2]{HLYin},  we have established for \eqref{Y-3-1} with
	 \(q\geq\tilde{q}_0=\frac{2((m+2)n+2)}{(m+2)m-2-4\beta}\),
	\begin{equation}\label{equ:3.34-1}
\lp t^\beta w\rp_{L^q([1,\infty)\times\R^{n})}\leq C\,\lp t^{-\beta}|D_x|^{\gamma-\frac{1}{m+2}}
F\rp_{L^{\frac{\tilde{q}_0}{\tilde{q}_0-1}}([1,\infty)\times\R^{n})},
\end{equation}
where $\gamma=\frac{n}{2}-\frac{2\beta}{m+2} -\frac{(m+2)n+2}{q(m+2)}$, \(0<\beta\leq\frac{m}{4} \), and the constant $C=C(n,m,q)$. A direct computation shows that for the choice
 \(\beta=\frac{\alpha}{\tilde{q}_0}\),
\[\tilde{q}_0=\frac{2((m+2)n+2)}{(m+2)m-2-4\beta}=\frac{2((m+2)n+2)}{(m+2)m-2-4\frac{\alpha}{\tilde{q}_0}}\Longrightarrow  \tilde{q}_0=\frac{2((m+2)n +2+2\alpha)}{(m+2)n-2}=q_0.\]
However, the restriction \(\frac{\alpha}{q_0}=\beta\leq\frac{m}{4}\) implies
\(\alpha\leq\frac{m}{2}\cdot\frac{(m+2)n+2}{(m+2)(n-1)}\),
thus one can derive \eqref{equ:H.0} from \cite[Lemma 2.2]{HLYin} only for the case \(\alpha\leq\frac{m}{2}\cdot\frac{(m+2)n+2}{(m+2)(n-1)}\). On the other hand, it can be computed easily that for all \(n\geq3\),
\[\frac{m}{2}\cdot\frac{(m+2)n+2}{(m+2)(n-1)}<m<\frac{n}{n-1}\cdot m, \]
thus Lemma 2.3 has improved the result in \cite[Lemma 2.2]{HLYin} for the case \(q=q_0\). Furthermore, the proof of Theorem 1.1 (Case I) require the global existence result for \eqref{YH-4} with \(\alpha=m\), therefore Lemma 2.3 is a crucial step to establish \eqref{equ:3.3}.
\end{remark}

\begin{proof}[\textbf{Proof of Lemma 2.3}]

Denote \(\beta=\frac{\alpha}{q_0}\), then for \(0<\alpha\leq\frac{m}{2}\cdot\frac{(m+2)n+2}{(m+2)(n-1)}\) or equivalently \(0<\beta\leq\frac{m}{4}\), \eqref{equ:H.0} is an immediate sequence of Lemma 2.2 in our former work \cite{HLYin} . Thus it suffices to consider \(\frac{m}{2}\cdot\frac{(m+2)n+2}{(m+2)(n-1)}<\alpha\leq\frac{n}{n-1}\cdot m\) or equivalently \(\frac{m}{4}<\beta\leq\frac{m}{2}\cdot\frac{n}{n+1}\).

It follows from \cite[(2.32)]{HLYin} that
\begin{equation}\label{equ:3.23}
  w(t,x)=\int_{\ral}\int_{\R^n}
  e^{i\{(x-y)\cdot\xi\pm[\phi_m(t)-\phi_m(s)]|\xi|\}}\,
 a(t,s,\xi)F(s,y)\,d\xi
  ds dy=:AF.
\end{equation}
where
\begin{equation}\label{equ:3.24}
|\partial_\xi^\kappa a(t,s,\xi)|\lesssim  (1+\phi_m(t)|\xi|)^{-\frac{m}{2(m+2)}}
 (1+\phi_m(s)|\xi|)^{-\frac{m}{2(m+2)}}|\xi|^{-\frac{2}{m+2}-|\kappa|}.
\end{equation}
In the remaining part of this article, it is enough to consider the phase function with sign minus before
\([\phi_m(t)-\phi_m(s)]|\xi|\) since for the case of sign plus, the related estimates can be obtained in the same way.
By setting $a_\lambda(t,s,\xi)=\chi(|\xi|/\lambda)a(t,s,\xi)$ for $\lambda>0$ and \(\chi\) defined in \eqref{equ:3.23-1}, one can obtain a dyadic decomposition of the operator $A$ as follows
\begin{equation}\label{equ:3.25}
A_\lambda F=\int_{\ral}\int_{\R^n}
  e^{i\{(x-y)\cdot\xi-[\phi_m(t)-\phi_m(s)]|\xi|\}}a_\lambda(t,s,\xi)
  F(s,y)\,d\xi ds dy.
\end{equation}
Define \(b(t,s,\xi)=|\xi|a_\lambda (t,s,\xi)\). Then
\(\bigl|\partial_\xi^\kappa b (t,s,\xi)\bigr|\lesssim t^{-\frac{m}{4}}s^{-\frac{m}{4}}\lambda ^{-|\kappa|}\)
	and
\begin{equation}\label{equ:3.26}
A_\lambda F=\int_{\ral}\int_{\R^n}
  e^{i\{(x-y)\cdot\xi-[\phi_m(t)-\phi_m(s)]|\xi|\}}
  |\xi|^{-1}b(t,s,\xi)F(s,y)\,d\xi dyds.
\end{equation}
Set
\begin{equation}\label{equ:3.27}
	H_{t,s}f(x)=\int_{\R^n}\int_{\R^n} e^{i\{(x-y)\cdot\xi-[\phi_m(t)-\phi_m(s)]|\xi|\}}b(t,s,\xi)f(y)\,d\xi dy,
\end{equation}
then \(|A_\lambda F|\lesssim|\lambda^{-1}\int_{T_0}^tH_{t,s}F\md s|\). Since \(t\geq s\geq T_0\), \eqref{equ:3.24} yields \(|\partial_\xi^\kappa b(t,s,\xi)|\lesssim t^{-\frac{m}{4}}s^{-\frac{m}{4}}|\xi|^{-|\kappa|}\), hence
\begin{equation}\label{equ:3.35}
\lp t^{\beta}H_{t,s}f(\cdot)\rp_{L^2(\R^n)}\leq C t^{\beta-\frac{m}{4}}s^{\beta-\frac{m}{4}}\left\| s^{-\beta}f(\cdot)\right\|_{L^2(\R^n)}.
\end{equation}
In addition, by the method of stationary phase, one has
\begin{equation}\label{equ:3.36}
\begin{aligned}
\Big\| t^{\beta} H_{t,s}&f(\cdot)\Big\|_{L^\infty(\R^n)}\lesssim \lambda^n
\lc1+\lambda \left|\phi_m(t)-\phi_m(s)\right|\rc^{-\frac{n-1}{2}}t^{\beta-\frac{m}{4}}s^{\beta-\frac{m}{4}}\left\| s^{-\beta}f(\cdot)\right\|_{L^1(\R^n)} \\
&\lesssim\lambda^n
(1+\lambda \left|\phi_m(t)-\phi_m(s)\right|)^{-\left(n-\frac{(m+2)n+2}{4(\beta+1)}\right)}t^{\beta-\frac{m}{4}}s^{\beta-\frac{m}{4}}\left\| s^{-\beta}f(\cdot)\right\|_{L^1(\R^n)} \\
&\lesssim\lambda^{\frac{(m+2)n+2}{4(\beta+1)}}
\left|\phi_m(t)-\phi_m(s)\right|^{ -\left(n-\frac{(m+2)n+2}{4(\beta+1)}\right)}
t^{\beta-\frac{m}{4}}s^{\beta-\frac{m}{4}}\left\| s^{-\beta}f(\cdot)\right\|_{L^1(\R^n)},
\end{aligned}
\end{equation}
where in the first inequality we use the fact \(0<n-\frac{(m+2)n+2}{4(\beta+1)}\leq\frac{n-1}{2}\) provided \(\frac{m}{4}< \beta\leq\frac{m}{2}\cdot\frac{n}{n+1}\).
To proceed further, we shall apply different techniques according to the value of \(\frac{s}{t}\).

\paragraph{Case I \(\mathbf { 2s\leq t }\)}

Since \(t-s\geq\frac{t}{2}\) and \(\beta>\frac{m}{4}\) , we have \(t^{\beta-\frac{m}{4}}s^{\beta-\frac{m}{4}}\lesssim|t-s|^{2\beta-\frac{m}{2}} \). Thus \eqref{equ:3.35} implies
\begin{equation}\label{equ:3.35-1}
\lp t^{\beta}H_{t,s}f(\cdot)\rp_{L^2(\R^n)}\leq C |t-s|^{2\beta-\frac{m}{2}}\left\| s^{-\beta}f(\cdot)\right\|_{L^2(\R^n)},
\end{equation}
and \eqref{equ:3.36} yields
\begin{equation}\label{equ:3.36-1}
	\lp t^{\beta}H_{t,s}f(\cdot)\rp_{L^\infty(\R^n)}\leq C\lambda^{\frac{(m+2)n+2}{4(\beta+1)}}
\left|t-s\right|^{ -\left(n-\frac{(m+2)n+2}{4(\beta+1)}\right)\frac{m+2}{2}}
|t-s|^{2\beta-\frac{m}{2}}\left\| s^{-\beta}f(\cdot)\right\|_{L^1(\R^n)}.
\end{equation}
Interpolating \eqref{equ:3.35-1} with \eqref{equ:3.36-1}, we have
\begin{equation}\label{equ:3.32}
  \lp t^{\beta}H_{t,s}f(\cdot)\rp_{L^{q_0}(\R^n)}\leq C\lambda
  \left|t-s\right|^{-\frac{(m+2)n-2-4\beta}{(m+2)n+2}}\left\| s^{-\beta}f(\cdot)\right\|_{L^{p_0}(\R^n)}.
\end{equation}

\paragraph{Case II \(\mathbf { 2s>t }\)}

By mean value theorem we have
\[\phi_m(t)-\phi_m(s)=\eta^{\frac{m}{2}}(t-s), \quad \eta\in[s,t]. \]
Note that \(\eta\geq s>\frac{t}{2}\), then by \eqref{equ:3.36} we get
\begin{equation}\label{equ:3.36-2}
\begin{aligned}
&\lp t^{\beta} H_{t,s}f(\cdot)\rp_{L^\infty(\R^n)} \\
&\leq C\lambda^{\frac{(m+2)n+2}{4(\beta+1)}}\eta^{ -\frac{m}{2}\left(n-\frac{(m+2)n+2}{4(\beta+1)}\right)}
\left|t-s\right|^{ -\left(n-\frac{(m+2)n+2}{4(\beta+1)}\right)}
t^{\beta-\frac{m}{4}}s^{\beta-\frac{m}{4}}\left\| s^{-\beta}f(\cdot)\right\|_{L^1(\R^n)} \\
&\leq C\lambda^{\frac{(m+2)n+2}{4(\beta+1)}}t^{ -\frac{m}{2}\left(n-\frac{(m+2)n+2}{4(\beta+1)}\right)}
\left|t-s\right|^{ -\left(n-\frac{(m+2)n+2}{4(\beta+1)}\right)}
t^{2\beta-\frac{m}{2}}\left\| s^{-\beta}f(\cdot)\right\|_{L^1(\R^n)},
\end{aligned}
\end{equation}
also \eqref{equ:3.35} gives
\begin{equation}\label{equ:3.35-2}
\lp t^{\beta}H_{t,s}f(\cdot)\rp_{L^2(\R^n)}\leq C t^{2\beta-\frac{m}{2}}\left\| s^{-\beta}f(\cdot)\right\|_{L^2(\R^n)}.
\end{equation}
Interpolating \eqref{equ:3.35-2} with \eqref{equ:3.36-2}, we have
\begin{equation}\label{equ:3.32-1}
\begin{split}
	&\lp t^{\beta}H_{t,s}f(\cdot)\rp_{L^{q_0}(\R^n)} \\
	&\leq C\lambda t^{2\beta-\frac{m}{2}
  -\frac{m}{2}\left(n-\frac{(m+2)n+2}{4(\beta+1)}\right)\frac{4(\beta+1)}{(m+2)n+2}}
  \left|t-s\right|^{-\left(n-\frac{(m+2)n+2}{4(\beta+1)}\right)\frac{4(\beta+1)}{(m+2)n+2}}\left\| s^{-\beta}f(\cdot)\right\|_{L^{p_0}(\R^n)}.
\end{split}
  \end{equation}
By \(\beta\leq\frac{m}{2}\cdot\frac{n}{n+1}\) we compute
\[2\beta-\frac{m}{2}
  -\frac{m}{2}\left(n-\frac{(m+2)n+2}{4(\beta+1)}\right)\frac{4(\beta+1)}{(m+2)n+2}
  =2\beta-\frac{2mn(\beta+1)}{(m+2)n+2}\leq0,\]
thus \eqref{equ:3.32-1} gives
\begin{equation}\label{equ:3.32-2}
\begin{split}
	&\lp t^{\beta}H_{t,s}f(\cdot)\rp_{L^{q_0}(\R^n)} \\
	&\leq C\lambda \left|t-s\right|^{2\beta-\frac{2mn(\beta+1)}{(m+2)n+2}}
  \left|t-s\right|^{-\left(n-\frac{(m+2)n+2}{4(\beta+1)}\right)\frac{4(\beta+1)}{(m+2)n+2}}\left\| s^{-\beta}f(\cdot)\right\|_{L^{p_0}(\R^n)} \\
  &\leq C\lambda
  \left|t-s\right|^{-\frac{(m+2)n-2-4\beta}{(m+2)n+2}}\left\| s^{-\beta}f(\cdot)\right\|_{L^{p_0}(\R^n)}.
\end{split}
  \end{equation}

Collecting \eqref{equ:3.32} and \eqref{equ:3.32-2}, then by $1-(\frac{1}{p_0}-\frac{1}{q_0})=
\frac{(m+2)n-2-4\beta}{(m+2)n+2}$ and the
Hardy-Littlewood-Sobolev inequality, we arrive at
\begin{multline}\label{equ:3.33-1}
\| A_\lambda F\|_{L^{q_0}(\ra)}
=\left\| \lambda^{-1}\int_{T_0}^{\infty}H_{t,s}F\,ds \right\|_{L^{q_0}(\ra)}\\
\begin{aligned}
&\leq C\lambda^{-1}\lambda\left\|\int_{T_0}^{\infty}
  |t-s|^{-\frac{(m+2)n-2-4\beta}{(m+2)n+2}}\left\| F(s,\cdot)\right\|_{L^{p_0}(\R^n)}\,
  ds\right\|_{L^{q_0}([T_0,\infty))} \\
&\leq C\left\| F\right\|_{L^{p_0}(\ra)}.
\end{aligned}
\end{multline}
It follows from Lemma 2.2 and
$p_0=\frac{2((m+2)n+2)}{(m+2)n+6+4\beta}<2$ that
\begin{equation}\label{equ:combine}
\begin{aligned}
\| A F \|_{L^{q_0}}^2&\leq C\sum_{j\in\Z}\| A_{2^j}F\|_{L^{q_0}}^2
\leq C\sum_{j\in\Z}\sum_{k:|j-k|\leq C_0}\| A_{2^j}F_k\|_{L^{q_0}}^2 \\
&\leq C\sum_{j\in\Z}\sum_{k:|j-k|\leq C_0}\| F_k\|_{L^{p_0}}^2
\le C\, \| F\|_{L^{p_0}(\ra)}^2,
\end{aligned}
\end{equation}
where $\hat{F}_k(s,\xi)=\chi(2^{-k}|\xi|)\,\hat{F}(s,\xi)$.
Hence,  the proof of the Lemma is completed.	
\end{proof}
As the end of this section, we prove \eqref{equ:H.0} with \(-1<\alpha<0\), for technical reason, we only show the local in time case.
\begin{lemma}
	Let \(n\geq2\) and \(-1<\alpha<0\), then for any fixed large \(\bar{T}>0\),
	\begin{equation}\label{equ:4.102}
\begin{split}
& \lp t^{\frac{\alpha}{q_0}}w\rp_{L^{q_0}([T_0, \bar{T}]\times\mathbb{R}^n)}\leq C \lp t^{-\frac{\alpha}{q_0}}F\rp_{L^{\frac{q_0}{q_0-1}}([T_0, \bar{T}]\times\mathbb{R}^n)},
\end{split}
\end{equation}
where \(C\) depends on \(q_0\) and \(\bar{T}\).
\end{lemma}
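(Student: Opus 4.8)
The plan is to mimic the proof of Lemma 2.3, again passing through the dyadic decomposition \eqref{equ:3.23}--\eqref{equ:3.27} and the two kernel estimates \eqref{equ:3.35} and \eqref{equ:3.36}, but to exploit the \emph{finiteness} of the time interval $[T_0,\bar T]$ to absorb all the extra powers of $t$ and $s$ that arise because now $\beta:=\alpha/q_0<0$. Concretely, on $[T_0,\bar T]$ one has $1\le t,s\le\bar T$, so every factor $t^{\beta-m/4}$ or $s^{\beta-m/4}$ (and any positive power of $t$ or $s$) is bounded above and below by constants depending only on $\bar T$ and $m$. Thus \eqref{equ:3.35} reduces to the clean bound $\|H_{t,s}f\|_{L^2}\le C\|f\|_{L^2}$, and \eqref{equ:3.36} reduces to $\|H_{t,s}f\|_{L^\infty}\le C\lambda^n(1+\lambda|\phi_m(t)-\phi_m(s)|)^{-\frac{n-1}{2}}\|f\|_{L^1}$, with $C=C(m,n,\bar T)$; the weights $t^{\alpha/q_0}$ and $s^{-\alpha/q_0}$ on the two sides of \eqref{equ:4.102} are likewise comparable to constants and can simply be discarded (at the cost of a $\bar T$-dependent constant).

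Next I would run exactly the interpolation/convolution argument of Lemma 2.3 but now \emph{with} $\beta=0$ in all the kernel estimates. That is, from $\|H_{t,s}f\|_{L^2}\lesssim\|f\|_{L^2}$ and the stationary-phase $L^1\to L^\infty$ bound, Riesz--Thorin interpolation gives
\begin{equation*}
\|H_{t,s}f\|_{L^{q_0}(\R^n)}\le C\,\lambda\,\bigl(1+\lambda|\phi_m(t)-\phi_m(s)|\bigr)^{-\left(n-\frac{(m+2)n+2}{4}\right)\frac{4}{(m+2)n+2}}\|f\|_{L^{p_0}(\R^n)},
\end{equation*}
where $p_0$ is the dual Lebesgue exponent to $q_0$. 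Since $1\le |\phi_m(t)-\phi_m(s)|^{-\theta}\cdot(\text{const})$ is false for small arguments, I would instead, as in Case~II of Lemma 2.3, use the mean value theorem $\phi_m(t)-\phi_m(s)=\eta^{m/2}(t-s)$ with $\eta\in[s,t]\subseteq[T_0,\bar T]$, so that $|\phi_m(t)-\phi_m(s)|$ is comparable to $|t-s|$ up to a $\bar T$-dependent constant; this yields $\|H_{t,s}f\|_{L^{q_0}}\le C\lambda|t-s|^{-\frac{(m+2)n-2}{(m+2)n+2}}\|f\|_{L^{p_0}}$. Because $\beta=0$, the exponent here is $1-(\frac1{p_0}-\frac1{q_0})$, so the Hardy--Littlewood--Sobolev inequality on the (now bounded) interval $[T_0,\bar T]$ converts $|A_\lambda F|\lesssim\lambda^{-1}\int_{T_0}^t H_{t,s}F\,ds$ into $\|A_\lambda F\|_{L^{q_0}([T_0,\bar T]\times\R^n)}\le C\|F\|_{L^{p_0}([T_0,\bar T]\times\R^n)}$ uniformly in $\lambda$; reinstating the bounded weights $t^{\pm\alpha/q_0}$ costs only the constant $C(\bar T)$. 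Finally the Littlewood--Paley square-function inequalities of Lemma 2.2, exactly as in \eqref{equ:combine}, assemble the dyadic pieces into \eqref{equ:4.102}.

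The main obstacle is purely bookkeeping: one must check that the endpoint Sobolev exponent in the Hardy--Littlewood--Sobolev step still matches when $\beta=0$, i.e.\ that with $q_0=\frac{2((m+2)n+2+2\alpha)}{(m+2)n-2}$ and $p_0=q_0/(q_0-1)$ the convolution exponent $\frac{(m+2)n-2}{(m+2)n+2}$ genuinely equals $1-(\frac1{p_0}-\frac1{q_0})$, and that the stationary-phase exponent $n-\frac{(m+2)n+2}{4}$ is strictly positive and no larger than $\frac{n-1}{2}$ for $n\ge2$, $m>0$ so that the $L^1\to L^\infty$ decay is usable; both are elementary computations. A minor subtlety is that near $t=s$ the factor $(1+\lambda|\phi_m(t)-\phi_m(s)|)^{-\theta}$ must be handled by the mean-value-theorem substitution so that the singularity is of the locally integrable type $|t-s|^{-\theta}$ with $\theta<1$, which is automatic once $\beta=0$; the restriction $-1<\alpha<0$ (rather than $\alpha\le 0$ or $\alpha$ large) is precisely what keeps $q_0$ in the admissible Strichartz range and is used only through the positivity of $n-\frac{(m+2)n+2}{4(\beta+1)}\big|_{\beta=0}$ and the integrability of the convolution kernel. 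No global-in-time decay is needed, which is why the lemma is stated only locally in time.
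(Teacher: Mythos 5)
Your approach does not go through, and the gap is not cosmetic. You propose to reuse the operator $H_{t,s}$ from Lemma~2.3 (built from $b=|\xi|a_\lambda$ with the crude bound $|b|\lesssim t^{-m/4}s^{-m/4}$), absorb the bounded $t,s$-powers into a $\bar T$-constant, and then ``tune'' the stationary-phase decay exponent. But to cancel the $\lambda^{-1}$ in $|A_\lambda F|\lesssim\lambda^{-1}\int H_{t,s}F\,ds$, the tuned exponent must be $c=n-\tfrac{1}{\theta}$ with $\theta=1-\tfrac{2}{q_0}=\tfrac{4(\beta+1)}{(m+2)n+2}$, where $\beta=\alpha/q_0<0$ (note: your formula uses $\tfrac{4}{(m+2)n+2}$, i.e.\ the $\beta=0$ value, which is not the interpolation weight attached to this $q_0$). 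The weakening $(1+u)^{-(n-1)/2}\le(1+u)^{-c}$ requires $c\ge0$, i.e.\ $n\ge\tfrac{(m+2)n+2}{4(\beta+1)}$. You assert that $n-\tfrac{(m+2)n+2}{4}>0$ for all $n\ge2$, $m>0$; this is false (already $m=2,\,n=2$ gives $2-\tfrac{10}{4}=-\tfrac12$), and with $\beta<0$ the required $c$ is even more negative. So the central step of your plan is unavailable in exactly the regime the lemma concerns. Concretely, your argument would produce $\|A_\lambda F\|_{L^{q_0}}\lesssim\lambda^{n\theta-1}\|F\|_{L^{p_0}}$, and since $n\theta-1$ is typically negative, the Littlewood--Paley sum $\sum_j\|A_{2^j}F\|^2$ diverges as $j\to-\infty$.

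The paper avoids this by working with $\tilde H_{t,s}$ (the symbol $a_\lambda$, not $|\xi|a_\lambda$) and, crucially, by splitting into $\lambda<1$ and $\lambda\ge1$. For $\lambda<1$ the paper uses \emph{no stationary phase at all}: the sharp low-frequency symbol bound $|a_\lambda|\lesssim\lambda^{-2/(m+2)}$ --- precisely the gain your crude bound on $b$ throws away --- gives $\|\tilde H_{t,s}f\|_{L^2}\lesssim\lambda^{-2/(m+2)}\|f\|_{L^2}$ and $\|\tilde H_{t,s}f\|_{L^\infty}\lesssim\lambda^{n-2/(m+2)}\|f\|_{L^1}$, whose interpolated $\lambda$-power $\tfrac{4n(\beta+1)}{(m+2)n+2}-\tfrac{2}{m+2}$ is positive exactly when $\alpha>-1$, hence $\le1$ for $\lambda<1$. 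For $\lambda\ge1$ the paper uses the high-frequency symbol bound $|a_\lambda|\lesssim t^{-m/4}s^{-m/4}\lambda^{-1}$ together with stationary phase, and the sign $\alpha<0$ forces the resulting $\lambda$-exponent $\tfrac{2(n+1)(\beta+1)}{(m+2)n+2}-1\le0$; finiteness of $[T_0,\bar T]$ is then used only to match the $|t-s|$-exponent to the Hardy--Littlewood--Sobolev exponent $\tfrac{(m+2)n-2-4\beta}{(m+2)n+2}$. So the low/high-frequency dichotomy and the symbol gain at low frequency are the missing ideas; without them the summation over dyadic scales cannot close, and both hypotheses $-1<\alpha$ and $\alpha<0$ are, respectively, what makes the two cases work --- not the positivity of a single tuned exponent as you suggest.
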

\begin{proof}
Note that \(w\) can be written in \eqref{equ:3.23} with the amplitude function \(a(t,s,\xi)\) satisfying \eqref{equ:3.24},

Define the dyadic operator \(A_\lambda\) as in \eqref{equ:3.25}, and set
\begin{equation}\label{equ:3.28}
	\tilde{H}_{t,s}f(x)=\int_{\R^n}\int_{\R^n} e^{i\{(x-y)\cdot\xi-[\phi_m(t)-\phi_m(s)]|\xi|\}}a_\lambda (t,s,\xi)f(y)\,d\xi dy.
\end{equation}
Then \(A_\lambda G=\int_{T_0}^t\tilde{H}_{t,s}G\md s\) and \eqref{equ:3.24} yields
\begin{equation}\label{equ:3.35-3}
\lp t^{\beta}\tilde{H}_{t,s}f(\cdot)\rp_{L^2(\R^n)}\leq Ct^\beta s^\beta \lambda^{-\frac{2}{m+2}} \left\| s^{-\beta}f(\cdot)\right\|_{L^2(\R^n)}\leq C \lambda^{-\frac{2}{m+2}} \left\| s^{-\beta}f(\cdot)\right\|_{L^2(\R^n)}.
\end{equation}
In addition, for \(\lambda<1\) , one has
\begin{equation}\label{equ:3.36-3}
\begin{aligned}
\lp t^{\beta} \tilde{H}_{t,s}f(\cdot)\rp_{L^\infty(\R^n)}&\leq C\lambda^{n-\frac{2}{m+2}}
t^\beta s^\beta \left\| s^{-\beta}f(\cdot)\right\|_{L^1(\R^n)}
\leq C\lambda^{n-\frac{2}{m+2}}\left\| s^{-\beta}f(\cdot)\right\|_{L^1(\R^n)}.
\end{aligned}
\end{equation}
Interpolation \eqref{equ:3.35-3} with \eqref{equ:3.36-3}, we get
\begin{equation}
	\lp t^{\beta} \tilde{H}_{t,s}f(\cdot)\rp_{L^{q_0}(\R^n)}\leq C\lambda^{\frac{4n(\beta+1)}{(m+2)n+2}-\frac{2}{m+2}}\left\| s^{-\beta}f(\cdot)\right\|_{L^{p_0}(\R^n)}.
\end{equation}
The condition \(\alpha>-1\) implies that
\[\frac{4n(\beta+1)}{(m+2)n+2}-\frac{2}{m+2}>\frac{4n}{(m+2)n+2}\cdot\frac{(m+2)n+2}{2(m+2)n}-\frac{2}{m+2}=0, \]
then by \(1\leq s\leq t\leq\bar{T}\) we have for \(\lambda<1\)
\begin{equation}
	\lp t^{\beta} \tilde{H}_{t,s}f(\cdot)\rp_{L^{q_0}(\R^n)}\leq C(\bar{T})\left|t-s\right|^{-\frac{(m+2)n-2-4\beta}{(m+2)n+2}}\left\| s^{-\beta}f(\cdot)\right\|_{L^{p_0}(\R^n)}.
\end{equation}
On the other hand, if \(\lambda\geq1\), then by stationary phase method, we get
\begin{equation}\label{equ:3.36-4}
\begin{aligned}
\lp t^{\beta} \tilde{H}_{t,s}f(\cdot)\rp_{L^\infty(\R^n)}&\leq C\lambda^n
(1+\lambda \left|\phi_m(t)-\phi_m(s)\right|)^{-\frac{n-1}{2}}t^{\beta-\frac{m}{4}}s^{\beta-\frac{m}{4}}\lambda^{-1} \left\| s^{-\beta}f(\cdot)\right\|_{L^1(\R^n)} \\
&\leq C\lambda^{\frac{n+1}{2}}\left|t-s\right|^{-\frac{n-1}{2}}
\lambda^{-1} \left\| s^{-\beta}f(\cdot)\right\|_{L^1(\R^n)},
\end{aligned}
\end{equation}
while for the \(L^2\) estimate, we have
\begin{equation}\label{equ:3.35-4}
	\lp t^{\beta}\tilde{H}_{t,s}f(\cdot)\rp_{L^2(\R^n)}\leq C t^{\beta-\frac{m}{4}}s^{\beta-\frac{m}{4}}\lambda^{-1} \left\| s^{-\beta}f(\cdot)\right\|_{L^2(\R^n)}\leq C\lambda^{-1} \left\| s^{-\beta}f(\cdot)\right\|_{L^2(\R^n)}.
\end{equation}
Interpolation \eqref{equ:3.35-4} with \eqref{equ:3.36-4}, we arrive at
\begin{equation}
	\lp t^{\beta} \tilde{H}_{t,s}f(\cdot)\rp_{L^{q_0}(\R^n)}\leq C\lambda^{\frac{2(n+1)(\beta+1)}{(m+2)n+2}-1}\left|t-s\right|^{-\frac{2(n-1)(\beta+1)}{(m+2)n+2}}
	\left\| s^{-\beta}f(\cdot)\right\|_{L^{p_0}(\R^n)}.
\end{equation}
Since \(\beta<0\), one can compute \(\frac{2(n+1)(\beta+1)}{(m+2)n+2}-1<0\) and \(-\frac{2(n-1)(\beta+1)}{(m+2)n+2}>-\frac{(m+2)n-2-4\beta}{(m+2)n+2}\), then by \(|t-s|\leq2\bar{T}\) we have
\begin{equation}
	\lp t^{\beta} \tilde{H}_{t,s}f(\cdot)\rp_{L^{q_0}(\R^n)}\leq C(\bar{T})\left|t-s\right|^{-\frac{(m+2)n-2-4\beta}{(m+2)n+2}}\left\| s^{-\beta}f(\cdot)\right\|_{L^{p_0}(\R^n)}
\end{equation}
for \(\lambda\geq1\). The remaining part of the proof is similar to \eqref{equ:3.33-1}-\eqref{equ:combine} in Lemma 2.3, we omit the details.
\end{proof}

\section{The proof of Theorem 1.3 at the end point \(\mathbf{q=q_0}\)}\label{sec4}
In order to prove Theorem~\ref{thm:inhomogeneous estimate}, as stated in Section 1.4,
it suffices to handle the two endpoint cases, which correspond to \(q=q_0=\frac{2((m+2)n+2\alpha +2)}{(m+2)n-2}\) and \(q=2\).
We start with the proof of \eqref{equ:3.3}.

\subsection{Local estimate at \(q=q_0\)}

One can write inequality \eqref{equ:3.3} as
\begin{equation}\label{equ:4.1}
\left\|\lc\phi_m^2(t)-|x|^2\rc^{\frac{1}{q_0}-\nu}t^{\frac{\alpha}{q_0}}w\right\|_{L^{q_0}([T_0,\infty)\times\mathbb{R}^n)}
\leq C\left\|\lc\phi_m^2(t)-|x|^2\rc^{\frac{1}{q_0}+\nu}t^{-\frac{\alpha}{q_0}}F\right\|_{L^{\frac{q_0}{q_0-1}}(\ra)},
\end{equation}
where $\nu>0$. First note that for any fixed \(\bar{T}\gg T_0\), the weight \(\phi_m^2(t)-|x|^2\) and \(t^{\frac{\alpha}{q_0}}\) are both bounded from below and above for \(T_0\leq t\leq \bar{T}\), this observation together with Lemma 2.3 and Lemma 2.4 give
\begin{equation}\label{equ:4.102}
\begin{split}
& \lp\lc\phi_m^2(t)-|x|^2\rc^{\frac{1}{q_0}-\nu}t^{\frac{\alpha}{q_0}}w\rp_{L^{q_0}([T_0, \bar{T}]\times\mathbb{R}^n)} \\
& \leq C\phi_m(\bar{T})^{-\frac{\nu}{4}} \lp\lc\phi_m^2(t)-|x|^2\rc^{\frac{1}{q_0}+\nu}t^{-\frac{\alpha}{q_0}}F\rp_{L^{\frac{q_0}{q_0-1}}([T_0, \bar{T}]\times\mathbb{R}^n)}.
\end{split}
\end{equation}
Hence in order to prove \eqref{equ:4.1} and further \eqref{equ:3.3}, it suffices to prove the following inequality for $T\geq \bar{T}$,
\begin{equation}
\begin{split}
&\Big\|\big(\phi_m^2(t)-|x|^2\big)^{\frac{1}{q_0}-\nu}t^{\frac{\alpha}{q_0}}w\Big\|_{L^{q_0}([T,2T]\times\mathbb{R}^n)} \\
& \leq C\phi_m(T)^{-\frac{\nu}{4}} \Big\|\big(\phi_m^2(t)-|x|^2\big)^{\frac{1}{q_0}+\nu}
t^{-\frac{\alpha}{q_0}}F\Big\|_{L^{\frac{q_0}{q_0-1}}
([T_0,\infty)\times\mathbb{R}^n)},
\end{split}
\label{equ:4.2}
\end{equation}
where $\bar{T}>0$ is a fixed large constant.
\subsection{Simplifications for the end point estimate}
\label{sect: Y}

Note that $F(t,x)\equiv0$ for $|x|>\phi_m(t)-1$, then this means
$\operatorname{supp}\ F\subseteq\{(t,x): |x|^2\leq\phi_m^2(t)-1\}$.
Set $F=F^0+F^1$, where
\begin{equation}
F^0=F, \quad \text{if}\quad t\geq\frac{T}{2\cdot10^{\frac{2}{m+2}}}; \quad F^0=0, \quad \text{if}\quad t<\frac{T}{2\cdot10^{\frac{2}{m+2}}}.
\label{equ:4.30}
\end{equation}
Correspondingly, let $w=w^0+w^1$, where $\partial_t^2w^j-t^m\Delta w^j=F^j$ with zero data $(j=0,1)$. Hence in order to prove \eqref{equ:4.2}, it suffices to show that for \(j=0,1\),
\begin{equation}
\begin{split}
\Big\|&\big(\phi^2_m(t)-|x|^2\big)^{\frac{1}{q_0}-\nu}t^{\frac{\alpha}{q_0}}w^j\Big\|_{L^{q_0}(\{(t,x):T\leq t\leq 2T\})} \\
&\leq C\phi_m(T)^{-\frac{\nu}{4}}\Big\|\big(\phi^2_m(t)-|x|^2\big)^{\frac{1}{q_0}+\nu}t^{-\frac{\alpha}{q_0}}F^j\Big\|_{L^{\frac{q_0}{q_0-1}}(\ra)}.\\
\end{split}
\label{equ:4.34}
\end{equation}

For this purpose, we shall make some reductions by which we restrict the support of \(F_j\) and \(w_j\)
in certain domains, such that in each domain the characteristic weight $\phi^2_m(t)-|x|^2$
on both sides of \eqref{equ:4.34} are essentially constants and hence can be removed. More specifically, following the idea of \cite{Gls1} and \cite{HWY4}, we assume that $\operatorname{supp}F^j\subseteq [\bar{T_0}, 2\bar{T_0}]\times\mathbb{R}^n$ for $\bar{T_0}=2^k\bar{T}$, \(k=0,1,2,...\) such that \(\bar{T_0}\leq T\), and $F^j\equiv0$ holds
when $\phi_m(t)-|x|\notin[\delta_0\phi_m(\bar T_0), 2\delta_0\phi_m(\bar{T_0})]$ for some fixed
constant $\dl_0$ with $0<\delta_0\le 2$ and $\delta_0\phi_m(\bar{T_0})\geq1$; while for the solution \(w_j\), we further assume that $w^j\equiv0$ holds
when $\phi_m(t)-|x|\notin[\delta\phi_m(\bar T_0), 2\delta\phi_m(\bar{T_0})]$ for $\delta\geq\delta_0$ such that \(\delta\phi_m(\bar{T}_0)\leq\phi_m(T) \).
 With these reductions, our task is reduced to prove some unweighted Strichartz estimates

\begin{equation}
\begin{split}
\Big\|&\big(\phi^2_m(t)-|x|^2\big)^{\frac{1}{q_0}-\nu}t^{\frac{\alpha}{q_0}}w^j\Big\|_{L^{q_0}(\{(t,x):T\leq t\leq 2T, \delta\phi_m(\bar{T_0})\leq\phi_m(t)-|x|\leq
 2\delta\phi_m(\bar{T_0})\})} \\
&\leq C\lc\phi_m(\bar{T_0})\phi_m(T)\rc^{-\frac{\nu}{2}}
\Big\|\big(\phi^2_m(t)-|x|^2\big)^{\frac{1}{q_0}+\nu}t^{-\frac{\alpha}{q_0}}F^j\Big\|_{L^{\frac{q_0}{q_0-1}}(\ra)}.
\end{split}
\label{equ:4.37}
\end{equation}
 By the scale of \(\phi_m(t)\) and  \(\phi_m(t)-|x|\), \eqref{equ:4.37} is equivalent to
\begin{align}\label{equ:4.A}
\big(\phi_m(\bar{T_0})&\phi_m(T)\delta\big)^{\frac{1}{q_0}-\nu}\lp  t^{\frac{\alpha}{q_0}}
w^j\rp_{L^{q_0}(\{(t,x):T\leq t\leq 2T, \delta\phi_m(\bar{T_0})\leq\phi_m(t)-|x|\leq
 2\delta\phi_m(\bar{T_0})\})} \no\\
\leq C&\lc\phi_m(\bar{T_0})\phi_m(T)\rc^{-\frac{\nu}{2}}\lc\phi^2_m(\bar{T_0})\delta_0\rc^{\frac{1}{q_0}+\nu}
\lp t^{-\frac{\alpha}{q_0}}F^j\rp_{L^{\frac{q_0}{q_0-1}}(\ra)}.
\end{align}
By rearranging some terms in \eqref{equ:4.A}, then \eqref{equ:4.A} directly follows from
\begin{align}\label{equ:4.B}
\Big(\frac{\phi_m(T)}{\phi_m(\bar{T_0})}\Big)^{\frac{1}{q_0}-\frac{\nu}{2}}\delta^{\frac{1}{q_0}+\frac{\nu}{2}}\frac{1}{\phi_m(\bar{T_0})^{3\nu}\delta^{\f{3}{2}\nu}\delta_0^{\nu}}
&\lp t^{\frac{\alpha}{q_0}}w^j\rp_{L^{q_0}(\{(t,x):T\leq t\leq 2T, \delta\phi_m(\bar{T_0})\leq\phi_m(t)-|x|\leq2\delta\phi_m(\bar{T_0})\})} \no\\
&\leq C\delta_0^{\frac{1}{q_0}}\lp t^{-\frac{\alpha}{q_0}}F^j\rp_{L^{\frac{q_0}{q_0-1}}(\ra)}.
\end{align}
Note that
\(\phi_m(\bar{T_0})^{3\nu}\delta^{\f{3}{2}\nu}\delta_0^{\nu}\geq\phi_m(\bar{T_0})^{-3\nu}\delta_0^{-\f{5}{2}\nu}
\geq\delta_0^{\f{\nu}{2}}\gtrsim 1 .\)
Therefore, \eqref{equ:4.B} follows from
\begin{equation}\label{equ:4.38}
\begin{split}
\Big(\frac{\phi_m(T)}{\phi_m(\bar{T_0})}\Big)^{\frac{1}{q_0}-\frac{\nu}{2}}
\delta^{\frac{1}{q_0}+\frac{\nu}{2}}
\|& t^{\frac{\alpha}{q_0}}w^j\|_{L^{q_0}(\{(t,x):T\leq t\leq 2T, \delta\phi_m(\bar{T_0})\leq\phi_m(t)-|x|\leq2\delta\phi_m(\bar{T_0})\})} \\
&\leq C\delta_0^{\frac{1}{q_0}}\| t^{-\frac{\alpha}{q_0}}F^j\|_{L^{\frac{q_0}{q_0-1}}(\ra)}.
\end{split}
\end{equation}

To proceed further, we set $G^j(t,x)=:\bar{T_0}^2F^j(\bar{T_0}t, \bar{T_0}^{\frac{m+2}{2}}x)$ and
$v^j(t,x)=:w^j(\bar{T_0}t, \bar{T_0}^{\frac{m+2}{2}}x)$ for \(j=0,1\).
Then $v^j$ satisfies
\begin{equation}\label{equ:4.104}
\begin{cases}
&\partial_t^2 v^j-t^m\triangle v^j=G^j(t,x), \\
&v^j(0,x)=0,\quad \partial_tv^j(0,x)=0, \\
\end{cases}
\end{equation}
where $\operatorname{supp}G^j\subseteq\{(t,x):1\leq t\leq2,\delta_0\phi_m(1)\leq \phi_m(t)-|x|\leq2\delta_0\phi_m(1)\}$. Then, if we let $T/\bar{T_0}$ denoted
by $T$, then \eqref{equ:4.38} is a result of
\begin{equation}
\begin{split}
	\phi_m(T)^{\frac{1}{q_0}-\frac{\nu}{2}}\delta^{\frac{1}{q_0}+\frac{\nu}{2}}&
\lp t^{\frac{\alpha}{q_0}}v^j\rp_{L^{q_0}(\{(t,x):T\leq t\leq 2T, \delta\phi_m(1)\leq\phi_m(t)-|x|\leq2\delta\phi_m(1)\})} \\
& \leq C\delta_0^{\frac{1}{q_0}}\lp t^{-\frac{\alpha}{q_0}}G^j\rp_{L^{\frac{q_0}{q_0-1}}(\rb)}.
\end{split}
\label{equ:4.85}
\end{equation}

At this time, by \eqref{equ:4.30}, \(1\leq T/\bar{T_0}\leq4\cdot10^{\frac{2}{m+2}} \) holds  for $(t,x)\in \operatorname{supp}w^0$ and \(T\leq t\leq 2T\),
or equivalently, \(1\leq T\leq 4\cdot10^{\frac{2}{m+2}}\) holds  for $(t,x)\in \operatorname{supp}v^0$ and
\(T\leq t\leq 2T\), which is called
the relatively ``small times".

On the other hand,
we have that \(T/\bar{T_0}\geq 2\cdot10^{\frac{2}{m+2}}\)
holds for $(t,x)\in \operatorname{supp}w^1$ and \(T\leq t\leq 2T\), or equivalently, \(T\geq 2\cdot10^{\frac{2}{m+2}}\) holds for $(t,x)\in \operatorname{supp}v^1$ and \(T\leq t\leq 2T\), which is called the relatively ``large times".

In Subsection 4.2 and Subsection 4.3, we will handle the two cases
respectively. For the concision of notation, in the following subsections, we will omit the superscript \(j\) and denote
\begin{equation}\label{equ:D-tx}
	D_{t,x}^{T,\delta}=\{(t,x):T\leq t\leq 2T, \delta\phi_m(1)\leq\phi_m(t)-|x|\leq2\delta\phi_m(1)\}.
\end{equation}
Then our task is reduced to prove
\begin{equation}
\phi_m(T)^{\frac{1}{q_0}-\frac{\nu}{2}}\delta^{\frac{1}{q_0}+\frac{\nu}{2}}
\lp t^{\frac{\alpha}{q_0}}v\rp_{L^{q_0}(D_{t,x}^{T,\delta})}
\leq C\delta_0^{\frac{1}{q_0}}\lp t^{-\frac{\alpha}{q_0}}G\rp_{L^{\frac{q_0}{q_0-1}}(\rb)}.
\label{equ:4.39}
\end{equation}

\subsection{Some related estimates for small times}
Let us begin with the estimate of \(v^0\) in \eqref{equ:4.39}.
Note that the integral domain of $v$ satisfies $\delta\phi_m(1)\leq\phi_m(t)-|x|\leq2\delta\phi_m(1)$
and the support of $G$ satisfies $\delta_0\phi_m(1)\leq\phi_m(t)-|x|\leq2\delta_0\phi_m(1)$, in order to
treat the related Fourier integral operator (corresponding to the estimate of $w^0$),
we distinguish the following two cases according to
the different values of $\delta/\delta_0$:

(i) $\delta_0\leq\delta\leq10\cdot 2^{\frac{m+2}{2}}\delta_0$;

(ii) $10\cdot 2^{\frac{m+2}{2}}\delta_0\leq\delta\leq (2T)^{\frac{m+2}{2}}$, if \(T^{\frac{m+2}{2}}\geq10\). \\

\paragraph{Case (i): small \(\delta\)}

In this case, one has that $\delta/\delta_0\in[1,10\cdot 2^{\frac{m+2}{2}}]$ and $\delta\phi_m(\bar{T_0})\leq\phi_m(T)$ in the support of \(w\) ( \(\delta\leq T^{\frac{m+2}{2}}\) in the support of \(v\) ). Thus in order to prove \eqref{equ:4.39}, it suffices to show
\begin{equation}\label{D1}
\phi_m(T)^{\frac{1}{q_0}}\lp t^{\frac{\alpha}{q_0}}v\rp_{L^{q_0}(D_{t,x}^{T,\delta})}\leq C\lp t^{-\frac{\alpha}{q_0}}G\rp_{L^{\frac{q_0}{q_0-1}}(\rb)}.
\end{equation}
By $\phi_m(1)\leq\phi_m(T)\leq10\phi_m(4)$, we only need to prove
\begin{equation}\label{D2}
\lp t^{\frac{\alpha}{q_0}}v\rp_{L^{q_0}(D_{t,x}^{T,\delta})}\leq C\lp t^{-\frac{\alpha}{q_0}}G\rp_{L^{\frac{q_0}{q_0-1}}(\rb)},
\end{equation}
and \eqref{D2} follows from Lemma 2.3 and Lemma 2.4 immediately.

\paragraph{Case (ii): large \(\delta\)}

In this case, we only need to prove
\begin{equation}
\delta^{\frac{1}{q_0}}\lp t^{\frac{\alpha}{q_0}}v\rp_{L^{q_0}(D_{t,x}^{T,\delta})}\leq C\delta_0^{\frac{1}{q_0}}\lp t^{-\frac{\alpha}{q_0}}G\rp_{L^{\frac{q_0}{q_0-1}}(\rb)}.
\label{equ:4.68}
\end{equation}
By \eqref{equ:3.23} , we can write
\begin{equation}\label{D4}
\begin{split}
v=&\int_{\rb}\int_{\Bbb R^n}e^{i\{(x-y)\cdot\xi-[\phi_m(t)-\phi_m(s)]|\xi|\}}a(t,s,\xi)G(s,y) \md\xi\md y\md s.
\end{split}
\end{equation}
Similar to the proof of Lemma 2.3, by setting  $a_\lambda(t,s,\xi)=\chi(|\xi|/\lambda)a(t,s,\xi)$ for $\lambda>0$ with function \(\chi\) defined in \eqref{equ:3.23-1}, then one can obtain a dyadic decomposition of $v$ as follows
\begin{equation}\label{equ:3.25-1}
v_\lambda=\int_{\rb}\int_{\R^n}e^{i\{(x-y)\cdot\xi-[\phi_m(t)-\phi_m(s)]|\xi|\}}a_\lambda(t,s,\xi)
  G(s,y)\,d\xi ds dy.
\end{equation}
Then it suffices to prove \eqref{equ:4.68} for \(v_\lambda\). Apply the operator \(\tilde{H}_{t,s}\) defined in \eqref{equ:3.28} to \(G\):
\[
\tilde{H}_{t,s}G(x)=\int_{\R^n}\int_{\R^n} e^{i\{(x-y)\cdot\xi-[\phi_m(t)-\phi_m(s)]|\xi|\}}a_\lambda(t,s,\xi)G(s,y)\,d\xi dy,
\]
then similar analysis as that in the derivation of \eqref{equ:3.32} and \eqref{equ:3.32-2} in Lemma 2.3 gives
\begin{equation}\label{equ:3.32-3}
\begin{split}
	&\lp t^{\beta}\tilde{H}_{t,s}G(\cdot)\rp_{L^{q_0}(\R^n)}\leq C
  \left|t-s\right|^{-\frac{(m+2)n-2-4\beta}{(m+2)n+2}}\left\| s^{-\beta}G(s,\cdot)\right\|_{L^{p_0}(\R^n)},
\end{split}
  \end{equation}
 where \(\beta=\frac{\alpha}{q_0}\).
Then by $\frac{(m+2)n-2-4\beta}{(m+2)n+2}=\frac{(m+2)n-2}{(m+2)n+2+2\alpha}=1-(\frac{1}{p_0}-\frac{1}{q_0})$ and the
Hardy-Littlewood-Sobolev inequality, we arrive at
\begin{equation}\label{equ:3.33-2}
\begin{split}
\lp t^{\frac{\alpha}{q_0}}v_\lambda\rp_{L^{q_0}(D_{t,x}^{T,\delta})}
&\leq\left\| \int t^{\frac{\alpha}{q_0}}\tilde{H}_{t,s}G\,ds \right\|_{L^{q_0}(\rb)}\\
&\leq C\left\|\int_I
  |t-s|^{-\frac{2}{q_0}}\left\| s^{-\beta}G(s,\cdot)\right\|_{L^{p_0}(\R^n)}\,
  ds\right\|_{L^{q_0}([1,2])} \\
&\leq C\left\||s|^{-\frac{2}{q_0}}\right\| _{L^{\frac{q_0}{2}}(I)}\left\| s^{-\beta}G\right\|_{L^{p_0}(\rb)}.
\end{split}
\end{equation}
Since the support of \(G(s,y)\) satisfies \(\delta_0\phi_m(1)\leq \phi_m(s)-|y|\leq2\delta_0\phi_m(1)\) and
\[\phi_m(s')-\phi_m(s)=(s'-s)\eta^{\frac{m}{2}}, \eta\in[1,2], \]
the length of the integral interval \(I\) should be controlled by \(c\delta_0\)  with some fixed positive constant \(c\). With out loss of generality, we may assume \(c=1\). Furthermore, note that \(|\phi_m(t)-\phi_m(s)|\gtrsim \delta \) and hence \(|t-s|\geq C\delta \) for some fixed positive constant \(C\). Therefore
\begin{equation}\label{equ:2.39}
	\left\||s|^{-\frac{2}{q_0}}\right\| _{L^{\frac{q_0}{2}}(I)}\leq\left(\int_{C\delta}^{C\delta+\delta_0}\frac{1}{|s|}\md s \right)^{\frac{2}{q_0}}\lesssim\left(\frac{\delta_0}{\delta} \right)^{\frac{2}{q_0}}\leq\left(\frac{\delta_0}{\delta} \right)^{\frac{1}{q_0}}.
\end{equation}
Combing \eqref{equ:3.33-2} and \eqref{equ:2.39}, then \eqref{equ:4.68} and further \eqref{equ:4.39} is proved.

\textbf{Thus \eqref{equ:4.39} has been established for relatively small time.}

\subsection{Some related estimates for large times}
\label{sec4:large}
We now deal with the cases of ``relative large times" in \eqref{equ:4.39}, for which $\phi_m(T)\geq10\phi_m(2)$.
Note that the integral domain of $v$ satisfies $\{\delta\phi_m(1)\leq\phi_m(t)-|x|\leq2\delta\phi_m(1)\}$, while
the support of $G$ satisfies $\{\delta_0\phi_m(1)\leq\phi_m(t)-|x|\leq2\delta_0\phi_m(1)\}$. Thus in order to
treat the related Fourier integral operator (corresponding to the estimate of $w^1$),
we distinguish the following three cases according to
the different values of $\delta/\delta_0$:

Case (i) $\delta_0\leq\delta\leq10\cdot 2^{\frac{m+2}{2}}\delta_0$;

Case (ii) $\delta\geq10\cdot 2^{\frac{m+2}{2}}$;

Case (iii) $10\cdot 2^{\frac{m+2}{2}}\delta_0\leq\delta\leq10\cdot 2^{\frac{m+2}{2}}$, if \(\delta_0\leq1\).

Here we point out that for Case (i)- Case (ii) of the wave equation, it is direct to
establish an inequality analogous to \eqref{equ:4.39} (see (3.2) and Section $3$ of \cite{Gls1}). However, for the Tricomi equation, due to the complexity of the fundamental solution,
more delicate and involved techniques from microlocal analysis are required to get the estimate
of $v$.

\subsubsection{Case (i): small \(\delta\)}\label{sec4:large:i}

Note that $\phi_m(1)>0$ and $\delta\phi_m(1)\leq\phi_m(T)$ in the support of \(v\) and
hence $\delta\lesssim \phi_m(T)$ holds. To prove \eqref{equ:4.39}, it suffices to show
\begin{equation}
\phi_m(T)^{\frac{1}{q_0}}\left\| t^{\frac{\alpha}{q_0}}v\right\|_{L^{q_0}(D_{t,x}^{T,\delta})}\leq C\lp t^{-\frac{\alpha}{q_0}}G\rp_{L^{\frac{q_0}{q_0-1}}(\rb)}.
\label{equ:4.40}
\end{equation}
To prove \eqref{equ:4.40} , by the method in \cite[Proposition 3.1]{Gls1} and \cite[Lemma 3.3]{HWY1}, if we write
\begin{equation}\label{equ:4.78}
v(t,x)=\int_0^t(\tilde{H}_{t,s}G)(x)\md s,
\end{equation}
where the operator \(\tilde{H}_{t,s}\) is defined in \eqref{equ:3.28} with the amplitude function \(a(t,s,\xi)\) satisfying \eqref{equ:3.24}, then it suffices to prove

\begin{claim}
\begin{equation}\label{equ:4.41}
\lp t^{\frac{\alpha}{q_0}}(\tilde{H}_{t,s}G)(\cdot)\rp_{L^{q_0}(\mathbb{R}^n)}\leq C|t-s|^{-\frac{2}{q_0}(1+\frac{m}{4})}\lp s^{-\frac{\alpha}{q_0}}G(s,\cdot)\rp_{L^{\frac{q_0}{q_0-1}}(\mathbb{R}^n)}.
\end{equation}
\end{claim}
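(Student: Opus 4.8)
The plan is to prove the pointwise-in-$(t,s)$ dispersive estimate \eqref{equ:4.41} for the operator $\tilde{H}_{t,s}$ by the standard Littlewood--Paley / $TT^*$ route adapted to the generalized Tricomi setting: first freeze the dyadic frequency scale $\lambda$, prove $L^{p_0}\to L^{q_0}$ bounds for the dyadic piece with a favorable power of $\lambda$ in front, and then sum in $\lambda$ using the almost-orthogonality from Lemma~\ref{lem:a5}. So the first step is to insert the Littlewood--Paley cutoff $a_\lambda(t,s,\xi)=\chi(|\xi|/\lambda)a(t,s,\xi)$ into \eqref{equ:3.28}, obtaining pieces $\tilde{H}_{t,s}^\lambda$ whose symbols satisfy, by \eqref{equ:3.24}, $|\partial_\xi^\kappa a_\lambda|\lesssim (1+\phi_m(t)|\xi|)^{-\frac{m}{2(m+2)}}(1+\phi_m(s)|\xi|)^{-\frac{m}{2(m+2)}}|\xi|^{-\frac{2}{m+2}}\lambda^{-|\kappa|}$ on the support $|\xi|\sim\lambda$.

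The two endpoint bounds are: (a) the trivial $L^1\to L^\infty$ bound via stationary phase in $\xi$, which for a kernel with phase $(x-y)\cdot\xi-[\phi_m(t)-\phi_m(s)]|\xi|$ gives a factor $\lambda^n(1+\lambda|\phi_m(t)-\phi_m(s)|)^{-\frac{n-1}{2}}$ times the amplitude size $\lesssim \min\{1,(\phi_m(t)\lambda)^{-\frac{m}{2(m+2)}}\}\min\{1,(\phi_m(s)\lambda)^{-\frac{m}{2(m+2)}}\}\lambda^{-\frac{2}{m+2}}$; and (b) the $L^2\to L^2$ bound, which by Plancherel is just the sup of the amplitude, $\lesssim (1+\phi_m(t)\lambda)^{-\frac{m}{2(m+2)}}(1+\phi_m(s)\lambda)^{-\frac{m}{2(m+2)}}\lambda^{-\frac{2}{m+2}}$. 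Here one must keep track of the weights $t^{\alpha/q_0}$ and $s^{-\alpha/q_0}$: since $\phi_m(t)\sim t^{(m+2)/2}$, the factor $t^{\alpha/q_0}s^{-\alpha/q_0}$ combines with the amplitude decay $(\phi_m(t)\phi_m(s))^{-\frac{m}{2(m+2)}}$ (valid in the regime $\phi_m(t)\lambda,\phi_m(s)\lambda\gtrsim 1$, which is the relevant one here since $|t-s|$ is not small and we are in ``large times''). Interpolating (a) and (b) with the Riesz--Thorin exponents $\frac{1}{q_0}=\frac{1-\theta}{2}$, $\frac{1}{p_0}=\frac{1+\theta}{2}$, where $\theta = \frac{(m+2)n-2-4\beta}{(m+2)n+2}$ and $\beta=\alpha/q_0$, produces a bound of the form $\lambda^{c_1}|\phi_m(t)-\phi_m(s)|^{-c_2}$ times the amplitude weights, and the choice of $q_0$ is exactly what makes the net power of $\lambda$ equal $0$ (or at worst summable). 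Converting $|\phi_m(t)-\phi_m(s)|$ to $|t-s|$ via $\phi_m(t)-\phi_m(s)=\eta^{m/2}(t-s)$ with $\eta\in[s,t]$, and using $\eta\sim t\sim s$ in the ``large times'' regime together with the weight bookkeeping, yields the claimed exponent $-\frac{2}{q_0}(1+\frac m4)$ after the various powers of $t$ are absorbed. Summing the dyadic pieces via Lemma~\ref{lem:a5}, exactly as in \eqref{equ:combine}, completes the proof of the Claim.

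The main obstacle I expect is the delicate bookkeeping of the powers of $t$, $s$, $\lambda$ and $|\phi_m(t)-\phi_m(s)|$ when combining the weights $t^{\alpha/q_0}s^{-\alpha/q_0}$ with the Tricomi amplitude decay $(1+\phi_m(t)|\xi|)^{-\frac{m}{2(m+2)}}(1+\phi_m(s)|\xi|)^{-\frac{m}{2(m+2)}}|\xi|^{-\frac{2}{m+2}}$: unlike the pure wave case of \cite{Gls1}, the symbol is inhomogeneous in $|\xi|$ and carries $t,s$-dependence, so one must check that in the relevant range of $\lambda$ (namely $\lambda\gtrsim (\phi_m(t))^{-1}$, forced by the support geometry $\phi_m(t)-|x|\sim\delta\phi_m(1)\geq 1$) the ``large-frequency'' form of the amplitude is the operative one and that the resulting exponent of $|t-s|$ is precisely $-\frac{2}{q_0}(1+\frac m4)$ rather than something merely summable. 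A secondary technical point is justifying the stationary-phase bound uniformly in $\lambda$ and in the parameter $\phi_m(t)-\phi_m(s)$ when the latter is comparable to, rather than much larger than, $\lambda^{-1}$; this is handled by the usual non-stationary/stationary dichotomy but needs the symbol derivative bounds above, which \eqref{equ:3.24} supplies. Since all of these are of the same nature as the estimates already carried out in Lemmas~2.3 and~2.4 and in \cite{HWY1,Gls1}, the proof is essentially a matter of reproducing that machinery in the present weighted, large-time regime.
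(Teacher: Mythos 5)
Your high-level strategy — insert a dyadic cutoff in $\xi$, prove $L^1\to L^\infty$ (stationary phase) and $L^2\to L^2$ (Plancherel) bounds, interpolate, then sum — does match the overall shape of the paper's argument for this Claim. However, there are two genuine gaps in the bookkeeping that would make the proof fail as written.

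First, your assertion that ``$\eta\sim t\sim s$ in the large-times regime'' is false and seems to conflate this Claim with Case II of Lemma~2.3 (where $s$ ranges over $[T_0,t]$ and $2s>t$ is a genuine case). Here the relevant $G$ is supported in $1\leq s\leq 2$, whereas $t\geq T\geq 2\cdot 10^{2/(m+2)}>2$ can be arbitrarily large. Thus $s$ is bounded while $t$ is large, so $s$ and $t$ are \emph{not} comparable; the correct and crucial observation, used explicitly in the paper, is $t\geq t-s\gtrsim t$. This is what lets one turn $\phi_m(t)-\phi_m(s)\sim t^{(m+2)/2}$ and the amplitude factor $t^{\frac{\alpha}{q_0}-\frac m4}$ into powers of $|t-s|$; starting from $t\sim s$ the resulting exponent of $|t-s|$ would not come out to $-\frac{2}{q_0}\bigl(1+\frac m4\bigr)$.

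Second, your claim that the support geometry forces $\lambda\gtrsim(\phi_m(t))^{-1}$, so that only the ``large-frequency'' form of the amplitude matters, is not justified: the compact support of $G$ in $(s,y)$ places no restriction on the Fourier variable $\xi$. And with the weight $\alpha\neq 0$, the dyadic estimate for $j\geq 0$ does \emph{not} carry a net power $\lambda^0$ but rather the strictly negative power $\lambda^{\frac{(n-1)\alpha-mn}{(m+2)n+2\alpha+2}}$ (negative precisely because $\alpha\leq m<\frac{mn}{n-1}$). This is summable over $j\geq 0$ but diverges as $j\to-\infty$, so a uniform dyadic summation cannot close. The paper therefore isolates the low-frequency piece
$\tilde{H}_-G=\int_{|\xi|\leq 1}e^{i\{x\cdot\xi-[\phi_m(t)-\phi_m(s)]|\xi|\}}t^{\frac{\alpha}{q_0}}a(t,s,\xi)\hat G(s,\xi)\,d\xi$
and estimates it directly, using the full amplitude bound $|a|\lesssim|\xi|^{-\frac{2}{m+2}}\bigl(1+\phi_m(t)|\xi|\bigr)^{-\frac{m}{2(m+2)}}$ (whose singularity at $\xi=0$ is integrable for $n\geq 2$) together with $|\hat G(s,\xi)|\lesssim\|G(s,\cdot)\|_{L^2}$ coming from the compact support of $G$. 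Without that separate treatment your summation step is not justified, so the proposal has a real hole there.
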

In fact, with {\bf Claim 3.1}, noting that by the support condition of \(v\) and \(G\), we have for every fixed \(m>0\)
\[t\geq T\geq2\cdot10^{\frac{2}{m+2}}>2\geq s, \]
which yields \(t\geq t-s\gtrsim t\),
we then have
\begin{equation*}
\begin{split}
\lp t^{\frac{\alpha}{q_0}}v\rp_{L^{q_0}([T, 2T]\times\mathbb{R}^n)}\leq&\lp\int_1^2\lp t^{\frac{\alpha}{q_0}}(\tilde{H}_{t,s}G)(\cdot)\rp _{L^{q_0}(\mathbb{R}^n)}\md s\rp _{L^{q_0}_t([T, 2T])} \\
\leq&C\lp\int_1^2|t-s|^{-\frac{2}{q_0}(1+\frac{m}{4})}\lp G(s,\cdot)\rp _{L^{\frac{q_0}{q_0-1}}_x}\md s\rp _{L^{q_0}_t} \\
\leq&C\lp|t|^{-\frac{2}{q_0}(1+\frac{m}{4})}\int_1^2\| G(s,\cdot)\|_{L^{\frac{q_0}{q_0-1}}_x}\md s\rp_{L^{q_0}_t} \\
\leq&C\lc\int _T^{2T}t^{-2(1+\frac{m}{4})}\md t\rc^{\frac{1}{q_0}}\lp s^{-\frac{\alpha}{q_0}}G\rp _{L^{\frac{q_0}{q_0-1}}(\rb)} \ \ \text{(H\"{o}lder's inequality)}\\
\leq&C\phi_m(T)^{-\frac{1}{q_0}}\lp s^{-\frac{\alpha}{q_0}}G\rp _{L^{\frac{q_0}{q_0-1}}(\rb)},
\end{split}
\end{equation*}
which derives \eqref{equ:4.40}.

\begin{proof}[Proof of {\bf Claim 3.1}]
We can make the dyadic decomposition $\tilde{H}_{t,s}G=\ds\sum_{j=-\infty}^{\infty}\tilde{H}_{t,s}^jG$, where
\begin{equation}\label{equ:4.79}
\tilde{H}_{t,s}^jG=\int_{\mathbb{R}^n}e^{i\{x\cdot\xi-[\phi_m(t)-\phi_m(s)]|\xi|\}}
 \chi\lc\frac{|\xi|}{2^j}\rc a(t,s,\xi)\hat{G}(s,\xi) \md\xi
\end{equation}
and the cut-off function \(\chi\) is defined in \eqref{equ:3.23-1}. Since \(1\leq s\leq2\) in the support of  \(G\), \eqref{equ:3.24} implies
\begin{equation}\label{equ:4.80-1}
\begin{split}
	\big| \partial_\xi^\beta a(t,s,\xi)\big|& \leq  C\phi_m(t)^{-\frac{m}{2(m+2)}}|\xi|^{-\frac{m}{2(m+2)}}\phi_m(s)^{-\frac{m}{2(m+2)}}|\xi|^{-\frac{m}{2(m+2)}}|\xi|^{-\frac{2}{m+2}-|\beta|} \\
	&\leq C\phi_m(t)^{-\frac{m}{2(m+2)}}|\xi|^{-1-|\beta|},
\end{split}
\end{equation}
Thus if we further set
\[\tilde{H}_jG(t,s,x)=:t^{\frac{\alpha}{q_0}}(\tilde{H}_{t,s}^jG)(x)\quad\text{with $\lambda_j=2^j$},\]
then an application of FIO theory and stationary phase method yields
\begin{equation*}
\begin{split}
\lp \tilde{H}_jG (t,s,\cdot)\rp _{L^2(\mathbb{R}^n)} \leq C\lambda_j^{-1}t^{\frac{\alpha}{q_0}-\frac{m}{4}}\lp s^{-\frac{\alpha}{q_0}}G(s,\cdot)\rp_{L^2(\mathbb{R}^n)}
\end{split}
\end{equation*}
and
\begin{equation*}
\begin{split}
\qquad\qquad\quad\| \tilde{H}_jG (t,s,\cdot)&\|_{L^\infty(\mathbb{R}^n)}\leq C\lambda_j^{\frac{n+1}{2}-1}t^{\frac{\alpha}{q_0}-\frac{m}{4}}|t-s|^{-\frac{n-1}{2}\cdot\frac{m+2}{2}}\lp
s^{-\frac{\alpha}{q_0}}G(s,\cdot)\rp_{L^1(\mathbb{R}^n)}.
\end{split}
\end{equation*}
By the interpolation and direct computation, we have that for $j\geq0$,
\begin{equation}\label{equ:4.44}
\|\tilde{H}_j(t,s,\cdot)\|_{L^{q_0}(\mathbb{R}^n)}\leq C \lambda^{\frac{n+1}{2}(1-\frac{2}{q_0})-1}t^{\frac{\alpha}{q_0}-\frac{m}{4}}|t-s|^{-\frac{(n-1)(m+2)}{4}(1-\frac{2}{q_0})}
\lp s^{-\frac{\alpha}{q_0}}G(s,\cdot)\rp_{L^{\frac{q_0}{q_0-1}}(\mathbb{R}^n)}.
\end{equation}
Since \(t\geq t-s\gtrsim t\), \eqref{equ:4.44} gives
\begin{equation}
	\begin{split}
		\|\tilde{H}_j(t,s,\cdot)&\|_{L^{q_0}(\mathbb{R}^n)} \\
		&\leq C\lambda^{\frac{n+1}{2}(1-\frac{2}{q_0})-1}|t-s|^{\frac{\alpha}{q_0}-\frac{m}{4}}|t-s|^{-\frac{(n-1)(m+2)}{4}(1-\frac{2}{q_0})}\lp s^{-\frac{\alpha}{q_0}}G(s,\cdot)\rp_{L^{\frac{q_0}{q_0-1}}(\mathbb{R}^n)} \\
		&= C\lambda^{\frac{(n-1)\alpha-mn}{(m+2)n+2\alpha+2}}|t-s|^{-\frac{2}{q_0}(1+\frac{m}{4})}\lp s^{-\frac{\alpha}{q_0}}G(s,\cdot)\rp_{L^{\frac{q_0}{q_0-1}}(\mathbb{R}^n)}.
	\end{split}
\end{equation}
Since \(-1<\frac{(n-1)\alpha-mn}{(m+2)n+2\alpha+2}<0\) provided \(\alpha< m\cdot\frac{n}{n-1}\), if we denote
\(\tilde{H}_+G (t,s,x)\equiv\ds \sum_{j\geq0}\tilde{H}_j(t,s,x),\)
 then
 \begin{equation}\label{equ:4.44-1}
 	\| \tilde{H}_+G(t,s,\cdot)\|_{L^{q_0}(\mathbb{R}^n)}\leq C|t-s|^{-\frac{2}{q_0}(1+\frac{m}{4})}
 	\lp s^{-\frac{\alpha}{q_0}}G(s,\cdot)\rp _{L^{\frac{q_0}{q_0-1}}(\mathbb{R}^n)}.
 \end{equation}

For $j<0$, let
\[\tilde{H}_-G (t,s,x)\equiv\ds \sum_{j<0}\tilde{H}_j(t,s,x)
=\int_{|\xi|\leq1}e^{i\{x\cdot\xi-[\phi_m(t)-\phi_m(s)]|\xi|\}}t^{\frac{\alpha}{q_0}}a(t,s,\xi)
\hat{G}(s,\xi) d\xi.\]
Then it follows from Plancherel's identity that
\begin{equation*}
\begin{split}
\lp \tilde{H}_-G (t,s,\cdot)\rp_{L^2(\mathbb{R}^n)}
\leq C\lc\int_{|\xi|\leq1}\left||\xi|^{-\f{2}{m+2}}t^{\frac{\alpha}{q_0}}\big(1+\phi_m(t)|\xi|\big)^{-\frac{m}{2(m+2)}}\hat{G}(s,\xi)\right|^2 \md\xi\rc^{\frac{1}{2}}.
\end{split}
\end{equation*}
Note that for \(1\leq s\leq2\)
\[\left|\hat{G}(s,\xi)\right|=\left|\int_{\mathbb{R}^n}e^{-iy\cdot\xi}G(s,y)\md y\right|
=\left|\int_{|y|\leq\phi_m(2)}e^{-iy\cdot\xi}G(s,y)\md y\right|
\leq C\lp s^{-\frac{\alpha}{q_0}}G(s,\cdot)\rp _{L^2(\mathbb{R}^n)},\]
then we can compute in the polar coordinate that
\begin{equation*}
\begin{split}
\bigg(\int_{|\xi|\leq1}\big||\xi|^{-\f{2}{m+2}}t^{\frac{\alpha}{q_0}}&(1+\phi_m(t)|\xi|)^{-\frac{m}{2(m+2)}}\big|^2 \md\xi\bigg)^{\frac{1}{2}} \\
&\le Ct^{\frac{\alpha}{q_0}}\lc\int_0^1r^{n-1-\frac{m+4}{m+2}}\phi_m(t)^{-\frac{m}{m+2}
}\md r\rc^{\frac{1}{2}}\le Ct^{\frac{\alpha}{q_0}-\frac{m}{4}},
\end{split}
\end{equation*}
here we have used the fact of $n-1-\frac{m+4}{m+2}\geq-\frac{2}{m+2}>-1$ for $n\geq2$ and $m>0$. Thus by the condition \(t\geq t-s\gtrsim t\) one has
\begin{equation*}
\lp \tilde{H}_-G (t,s,\cdot)\rp_{L^2(\mathbb{R}^n)}\leq Ct^{\frac{\alpha}{q_0}-\frac{m}{4}}\lp G(s,\cdot)\rp_{L^2(\mathbb{R}^n)}
\end{equation*}
Similarly, we have by stationary phase method,
\begin{equation*}
\lp \tilde{H}_-G (t,s,\cdot)\rp_{L^\infty(\mathbb{R}^n)}
\leq C|t-s|^{\frac{\alpha}{q_0}-\frac{m}{4}-\frac{n-1}{2}\cdot\frac{m+2}{2}}\lp G(s,\cdot)\rp_{L^1(\mathbb{R}^n)}.
\end{equation*}
Using interpolation again, we get
\begin{equation}\label{equ:4.48}
\lp \tilde{H}_-G (t,s,\cdot)\rp_{L^{q_0}(\mathbb{R}^n)}\leq C|t-s|^{-\frac{2}{q_0}(1+\frac{m}{4})}\lp G(s,\cdot)\rp_{L^{\frac{q_0}{q_0-1}}(\mathbb{R}^n)}.
\end{equation}
Then by Littlewood-Paley theory,  \eqref{equ:4.44-1} and \eqref{equ:4.48}, {\bf Claim 3.1}
is established.
\end{proof}

\subsubsection{Case (ii): large \(\delta\)}\label{sec4:large:ii}

In this case, one has $\phi_m(t)-|x|\geq\delta\phi_m(1)\gtrsim 10\phi_m(2)$. As in \eqref{equ:4.78} and \eqref{equ:4.79}, we can write
\[v=\sum_{j=-\infty}^{\infty}v_j=\sum_{j=-\infty}^{\infty}\int_0^t\int_{\Bbb R^n} K_j(t,x;s,y)G(s,y)dyds,\]
where
\begin{equation}
 K_j(t,x;s,y)=\int_{\mathbb{R}^n}e^{i\{(x-y)\cdot\xi-[\phi_m(t)-\phi_m(s)]|\xi|\}}
\chi\lc\frac{|\xi|}{2^j}\rc a(t,s,\xi)G(s,y) d\xi,
\label{equ:4.50}
\end{equation}
moreover, as in (3.41) of \cite{HWY1}, the kernel \(K_j\) satisfies for $\lambda_j=2^j$ and any $N\in\Bbb R^+$,
\begin{equation}
\begin{split}
|K_j(t,x;s,y)|\leq &C_N\lambda_j^{\frac{n+1}{2}-\frac{2}{m+2}}\big(|\phi_m(t)-\phi_m(s)|
+\lambda_j^{-1}\big)^{-\frac{n-1}{2}}\big(1+\phi_m(t)\lambda_j\big)^{-\frac{m}{2(m+2)}} \\
&\times \Big(1+\lambda_j\big||\phi_m(t)-\phi_m(s)|-|x-y|\big|\Big)^{-N}.
\end{split}
\label{equ:4.51}
\end{equation}
Denote $D_{s,y}^{\delta_0}=\{(s,y): 1\leq s\leq2, \delta_0\phi_m(1)\leq\phi_m(s)-|y|\leq2\delta_0\phi_m(1)\}$. By H\"{o}lder's inequality
and the support condtiton of $G(s,y)$ with respect to the variable $(s,y)$, we arrive at
\[
\begin{split}
	|t^{\frac{\alpha}{q_0}}v_j|\leq
& \Big\|t^{\frac{\alpha}{q_0}} K_j(t,x;s,y)\big(\phi^2_m(t)-|x|^2\big)^{\frac{1}{q_0}}s^{\frac{\alpha}{q_0}}\Big\|_{L^{q_0}(D_{s,y}^{\delta_0})} \\
	& \times\Big\|\big(\phi^2_m(t)-|x|^2\big)^{-\frac{1}{q_0}}s^{-\frac{\alpha}{q_0}}G(s,y)
\Big\|_{L^{\frac{q_0}{q_0-1}}(D_{s,y}^{\delta_0})}.
\end{split}
\]
In addition, by applying the support condition of $G(s,y)$, it is easy to check
\[\phi_m(t)-\phi_m(s)-|x-y|\geq C(\phi_m(t)-|x|), \quad \phi_m(t)-\phi_m(s)+|x-y|\sim\phi_m(t).\]

Based on this observation, let $N=\frac{n}{2}-\frac{1}{m+2}$ in \eqref{equ:4.51}, we then have
\begin{equation}\label{equ:4.52}
\begin{split}
&\Big\| t^{\frac{\alpha}{q_0}}K_j(t,x;s,y)\big(\phi^2_m(t)-|x|^2\big)^{\frac{1}{q_0}}s^{-\frac{\alpha}{q_0}}\Big\|_{L^{q_0}(D_{s,y}^{\delta_0})} \\
&\leq C\bigg(\iint_{D_{s,y}^{\delta_0}}\bigg\{t^{\frac{\alpha}{q_0}}\lambda_j^{\frac{n+1}{2}-\frac{2}{m+2}}\big(|\phi_m(t)-\phi_m(s)|
+\lambda_j^{-1}\big)^{-\frac{n-1}{2}}(1+\phi_m(t)\lambda_j)^{-\frac{m}{2(m+2)}} \\
&\qquad \times \Big(1+\lambda_j\big||\phi_m(t)-\phi_m(s)|-|x-y|\big|\Big)^{\frac{1}{m+2}-\frac{n}{2}}\bigg\}^{q_0}
\phi_m(t)(\phi_m(t)-|x|)\md y\md s\bigg)^{\frac{1}{q_0}} \\
&\leq C\phi_m(t)^{-\frac{n-1}{2}+\frac{1}{q_0}+\frac{2\alpha}{(m+2)q_0} -\frac{m}{2(m+2)}}\big(\phi_m(t)-|x|\big)^{\frac{1}{m+2}-\frac{n}{2}+\frac{1}{q_0}}
\Big(\iint_{D_{s,y}^{\delta_0}}\md y\md s\Big)^{\frac{1}{q_0}} \\
&\leq C\delta_0^{\frac{1}{q_0}}\phi_m(t)^{-\frac{n}{2}+\frac{1}{m+2}+\frac{1}{q_0}
+\frac{2\alpha}{(m+2)q_0}}\big(\phi_m(t)-|x|\big)^{\frac{1}{m+2}-\frac{n}{2}+\frac{1}{q_0}}
\end{split}
\end{equation}
and
\begin{equation}\label{equ:4.53}
\begin{split}
\Big(\iint_{D_{s,y}^{\delta_0}}\big\{(\phi^2_m(t)-|x|^2)^{-\frac{1}{q_0}}&s^{-\frac{\alpha}{q_0}}G(s,y)\big\}^{\frac{q_0}{q_0-1}}\md y\md s\Big)^{\frac{q_0-1}{q_0}} \\
&\leq C\big(\delta\phi_m(T)\big)^{-\frac{1}{q_0}}\| s^{-\frac{\alpha}{q_0}}G\|_{L^{\frac{q_0}{q_0-1}}(\ra)} \\
\end{split}
\end{equation}
On the other hand,
\begin{equation}
\begin{split}
&\lp\phi_m(t)^{-\frac{n}{2}+\frac{1}{m+2}+\frac{1}{q_0}
+\frac{2\alpha}{(m+2)q_0}}\big(\phi_m(t)-|x|\big)^{\frac{1}{m+2}-\frac{n}{2}
+\frac{1}{q_0}}\rp_{L^{q_0}([T, 2T]\times\mathbb{R}^n)}\\
&=C_n\lc\int_T^{2T}\phi_m(t)^{q_0(\frac{n}{2}-\frac{1}{m+2})+1+\frac{2\alpha}{m+2}}\int_0^{\phi_m(t)-10\phi_m(2)}
\big(\phi_m(t)-r\big)^{q_0(\frac{1}{m+2}-\frac{n}{2})+1}r^{n-1}\md r\md t\rc^{\frac{1}{q_0}} \\
&\leq C\lc\int_T^{2T}\phi_m(t)^{-\frac{2}{m+2}}\md t\rc^{\frac{1}{q_0}}\leq C.
\end{split}
\label{equ:4.54}
\end{equation}

Therefore, combining \eqref{equ:4.52}-\eqref{equ:4.54} gives

\begin{align}\label{equ:4.103}
\| v_j\|_{L^{q_0}(D_{t,x}^{T,\delta})}\leq&C\delta_0^{\frac{1}{q_0}}(\delta\phi_m(T))^{-\frac{1}{q_0}}\| s^{-\frac{\alpha}{q_0}}G\|_{L^{\frac{q_0}{q_0-1}}(\ra)} \no\\
\leq &C\delta_0^{\frac{1}{q_0}}\phi_m(T)^{-\frac{1}{q_0}+\frac{\nu}{2}}\delta^{-\frac{1}{q_0}-\frac{\nu}{2}}
\| s^{-\frac{\alpha}{q_0}}G\|_{L^{\frac{q_0}{q_0-1}}(\ra)},
\end{align}
here we have used the fact of  $\delta\lesssim\phi_m(T)$ due to $2\delta\phi_m(1)\leq\phi_m(T)$. Then
\eqref{equ:4.103} together with Lemma \ref{lem:a5} yields estimate \eqref{equ:4.39} in Case (ii).

\subsubsection{Case (iii): medium \(\delta\)}\label{sec4:large:iii}

Motivated by the ideas in Section 3 of \cite{Gls1}, we shall decompose
the related Fourier integral operator in the expression of $v$
into a high frequency part and a low frequency part, then the two parts are
treated with different techniques respectively.
First, by \eqref{equ:3.23}, the solution \(v\) of \eqref{equ:4.104} can be expressed as
\begin{equation}
v=\int_{\rb}\int_{\Bbb R^n}e^{i\{(x-y)\cdot\xi-[\phi_m(t)-\phi_m(s)]|\xi|\}}a(t,s,\xi)G(s,y) \md\xi\md y\md s
\label{equ:4.6}
\end{equation}
by \eqref{equ:3.24} we have for $\kappa\in \Bbb N_0^n$,
\begin{equation}\label{equ:4.7}
\begin{split}
	\big| \partial_\xi^\kappa a(t,s,\xi)\big|& \leq C\big(1+\phi_m(t)|\xi|\big)^{-\frac{m}{2(m+2)}}|\xi|^{-\frac{2}{m+2}-|\kappa|}, \\
\end{split}
\end{equation}
\begin{equation}\label{equ:4.8}
	\big| \partial_\xi^\kappa a(t,s,\xi)\big|\leq C\phi_m(t)^{-\frac{m}{2(m+2)}}|\xi|^{-1-|\kappa|}.
\end{equation}

Set $\tau=\phi_m(s)-|y|$. Applying H\"{o}lder's inequality, one then has that
\begin{equation}\label{equ:4.55}
\begin{split}
&\left|t^{\frac{\alpha}{q_0}}v\right|=\left|\int_{\phi_m(1)}^{\phi_m(2)}\int_{\mathbb{R}^n}\int_{\mathbb{R}^n}e^{i\{(x-y)\cdot\xi-[\phi_m(t)-\phi_m(s)]|\xi|\}}a(t,s,\xi)G(s,y)\md\xi\md y \frac{1}{\phi_m(s)} \md\phi_m(s)\right| \\
&\lesssim \int_{\delta_0}^{2\delta_0}\left|\int_{\mathbb{R}^n}\int_{\mathbb{R}^n}e^{i\{(x-y)\cdot\xi-[\phi_m(t)
-\tau-|y|]|\xi|\}}a(t,\phi_m^{-1}(\tau+|y|),\xi)G(\phi_m^{-1}(\tau+|y|),y)\md\xi\md y \right|\md\tau\\
&\le C\delta_0^{\frac{1}{q_0}}\bigg(\int_{\delta_0}^{2\delta_0}\Big|\int_{\Bbb R^n}\int_{\Bbb R^n} e^{i\{(x-y)\cdot\xi-[\phi_m(t)-\tau-|y|]|\xi|\}}t^{\frac{\alpha}{q_0}}a(t,\phi_m^{-1}(\tau+|y|),\xi) \\
&\qquad\qquad\qquad\qquad\qquad\qquad \times G\big(\phi_m^{-1}(\tau+|y|),y\big)\md\xi\md y
\Big|^{\frac{q_0}{q_0-1}}\md \tau\bigg)^{\frac{q_0-1}{q_0}}.
\end{split}
\end{equation}

Next note that by \eqref{equ:4.30},
\(\phi_m(t)\geq\phi_m(T)\geq10\phi_m(2)\),
this together with $\tau<\phi_m(s)<\phi_m(2)$ yields $\phi_m(t)\geq\phi_m(t)-\tau>\frac{1}{2}\phi_m(t)$.
Thus we can replace $\phi_m(t)-\tau$ with $\phi_m(t)$ in \eqref{equ:4.55} and consider
\[\mathcal{T}g(t,x)=\int_{\Bbb R^n}\int_{\Bbb R^n} e^{i\{(x-y)\cdot\xi-[\phi_m(t)-|y|]|\xi|\}}
b(t,\xi) g(y)\md\xi\md y.\]
The estimate of $\|\mathcal{T}g\|_{L_x^{q_0}}$ is divided into different parts according to the range of  \(\alpha\) and \(|\xi|\).
\paragraph{Case (iii-1) \(-1<\alpha<0\)} In this case, \eqref{equ:4.7} implies that for $\kappa\in \Bbb N_0^n$,
\[\partial_\xi^\kappa b(t,\xi)\leq C\big(1+\phi_m(t)|\xi|\big)^{-\frac{m}{2(m+2)}}|\xi|^{-\frac{2}{m+2}-|\kappa|} \]
which motivate us to consider for \(z\in\mathbb{C}\)
\begin{equation}
\begin{split}
(\mathcal{T}_zg)(t,x)=&\left(z-\frac{(m+2)n+2+2\alpha}{(m+2)(\alpha+2)}\right)e^{z^2}  \\
&\times \int_{\mathbb{R}^n}\int_{\mathbb{R}^n}
e^{i\{(x-y)\cdot\xi-[\phi_m(t)-|y|]|\xi|\}}t^{\frac{\alpha}{q_0}}\big(1+\phi_m(t)|\xi|\big)^{-\frac{m}{2(m+2)}}g(y)\frac{\md\xi}{|\xi|^z}\md y,
\end{split}
\label{equ:H.1}
\end{equation}
where $\phi_m(t)\geq10\phi_m(2)-\phi_m(2)\geq9\phi_m(2)$ and $\delta<10\phi_m(2)$, then by \cite[Lemma A.2.]{HWY4}, \eqref{equ:4.39} follows from
\begin{equation}
\begin{split}
	\| (\mathcal{T}_{z}g)(t,\cdot) &\|_{L^{q_0}(\{x:\delta\phi_m(1)\leq\phi_m(t)-|x|\leq2\delta\phi_m(1)\})} \\
	&\leq C\phi_m(t)^{\frac{\nu}{2}-\frac{m+4}{q_0(m+2)}}\delta^{-\frac{\nu}{2}
-\frac{1}{q_0}}
\| g\|_{L^{\frac{q_0}{q_0-1}}(\mathbb{R}^n)}, \quad Rez=\frac{2}{m+2} .
\end{split}
\label{equ:4.57}
\end{equation}
For clearer statement on \eqref{equ:4.57}, we shall replace $\frac{\nu}{2}$ by $\frac{\nu}{q_0}$ and rewrite
\eqref{equ:4.57} as
\begin{equation}
\begin{split}
	\| (\mathcal{T}_{z}g)(t,\cdot) &\|_{L^{q_0}(\{x:\delta\phi_m(1)\leq\phi_m(t)-|x|\leq2\delta\phi_m(1)\})} \\
	&\leq C\phi_m(t)^{\frac{\nu}{q_0}-\frac{m+4}{q_0(m+2)}}\delta^{-\frac{\nu}{q_0}
-\frac{1}{q_0}}
\| g\|_{L^{\frac{q_0}{q_0-1}}(\mathbb{R}^n)}, \quad Rez=\frac{2}{m+2} .
\end{split}
\label{equ:4.56}
\end{equation}

Next we focus on the proofs of \eqref{equ:4.56} and \eqref{equ:4.57}. We shall use the complex interpolation
method to establish \eqref{equ:4.56},

then \eqref{equ:4.56} would be a consequence of
\begin{equation}
\| (\mathcal{T}_zg)(t,\cdot)\|_{L^\infty(\mathbb{R}^n)}\leq Ct^{\frac{\alpha}{q_0}}\phi_m(t)^{-\frac{n-1}{2}-\frac{m}{2(m+2)}}\| g\|_{L^1(\mathbb{R}^n)}, Rez=\frac{(m+2)n+2+2\alpha}{(m+2)(\alpha+2)},
\label{equ:4.59}
\end{equation}
and
\begin{equation}
\| (\mathcal{T}_zg)(t,\cdot)\|_{L^2(\mathbb{R}^n)}\leq Ct^{\frac{\alpha}{q_0}}\phi_m(t)^{-\frac{m}{2(m+2)}}
(\phi_m(t)^{\nu}\delta^{-(\nu+1)})^{\frac{1}{m+2}}\| g\|_{L^2(\mathbb{R}^n)}, \quad Rez=0.
\label{equ:4.60}
\end{equation}

In fact, the interpolation between \eqref{equ:4.60} and \eqref{equ:4.59} gives
\begin{equation*}
\begin{split}
	&\| (\mathcal{T}_{z}g)(t,\cdot)\|_{L^{q_0}(\{x:\delta\phi_m(1)\leq\phi_m(t)-|x|\leq2\delta\phi_m(1)\})} \\
	& \leq C\phi_m(t)^{\frac{2\alpha}{(m+2)q_0}-\frac{m}{2(m+2)} +(-\frac{n-1}{2})(1-\frac{2}{q_0})+ \frac{2\nu}{q_0(m+2)}}\delta^{-\frac{2(\nu+1)}{(m+2)q_0}
}\| g\|_{L^{\frac{q_0}{q_0-1}}(\mathbb{R}^n)} \\
    & \leq C\phi_m(t)^{\frac{\nu}{q_0}-\frac{m+4}{q_0(m+2)}}\delta^{-\frac{2(\nu+1)}{(m+2)q_0}}\| g\|_{L^{\frac{q_0}{q_0-1}}(\mathbb{R}^n)}.
\end{split}
\end{equation*}
By $\delta\leq10\phi_m(2)$, \eqref{equ:4.56} is established.

We now prove \eqref{equ:4.59} by the stationary phase method.
To this end, for
$z=\frac{(m+2)n+2+2\alpha}{(m+2)(\alpha+2)}+i\theta$ with $\theta\in\Bbb R$, we have for all \(-1<\alpha<0\),
\[\frac{(m+2)n+2+2\alpha}{(m+2)(\alpha+2)}>\frac{(m+2)n+2}{2(m+2)},\]
thus there exists \(\sigma>0\) such that
\[\frac{(m+2)n+2+2\alpha}{(m+2)(\alpha+2)}-\sigma >\frac{(m+2)n+2}{2(m+2)},\]
hence,
\begin{equation}
\begin{split}
&\bigg| \theta e^{-\theta^2}\int_{\mathbb{R}^n}\int_{|\xi|\geq1}e^{i[(x-y)\cdot\xi-(\phi_m(t)-|y|)|\xi|]}t^{\frac{\alpha}{q_0}}\big(1+\phi_m(t)|\xi|\big)^{-\frac{m}{2(m+2)}}g(y)\frac{\md\xi}{|\xi|^z}\md y \bigg| \\
&\leq C\big(\phi_m(t)-\phi_m(2)\big)^{-\frac{n-1}{2}}\phi_m(t)^{-\frac{m}{2(m+2)}}
t^{\frac{\alpha}{q_0}}\int_{|\xi|\geq1} |\xi|^{-\frac{n-1}{2}-\frac{m}{2(m+2)}}
|\xi|^{-\frac{(m+2)n+2}{2(m+2)}-\sigma}\md \xi \| g\|_{L^1(\mathbb{R}^n)} \\
&\leq C\phi_m(t)^{-\frac{n-1}{2}-\frac{m}{2(m+2)}}t^{\frac{\alpha}{q_0}}\int_1^\infty r^{-1-\sigma}\md r\| g\|_{L^1(\mathbb{R}^n)} \\
& \leq C\phi_m(t)^{-\frac{n}{2}+\frac{1}{m+2}}t^{\frac{\alpha}{q_0}}
\| g\|_{L^1(\mathbb{R}^n)}.
\end{split}
\label{equ:4.62}
\end{equation}

On the other hand, we have that for $|\xi|\leq1$,
\begin{equation}\label{equ:4.64}
\begin{split}
&\left| \theta e^{-\theta^2}\int_{|\xi|\leq1}\int_{\mathbb{R}^n}e^{i[(x-y)\cdot\xi-(\phi_m(t)-|y|)|\xi|]} t^{\frac{\alpha}{q_0}}\big(1+\phi_m(t)|\xi|\big)^{-\frac{m}{2(m+2)}}g(y)\frac{\md\xi}{|\xi|^z}\md y \right| \\
&\leq C\| g\|_{L^1(\mathbb{R}^n)} \int_{|\xi|\leq1}t^{\frac{\alpha}{q_0}}\big(1+\phi_m(t)|\xi|\big)^{-\frac{n-1}{2}-\frac{m}{2(m+2)}}|\xi|^{-\frac{(m+2)n+2+2\alpha}{(m+2)(\alpha+2)}}\md\xi\\
&\leq C\| g\|_{L^1(\mathbb{R}^n)} t^{\frac{\alpha}{q_0}}\int_0^1(1+\phi_m(t)r)^{-\frac{n}{2}+\frac{1}{m+2}}
r^{-\frac{(m+2)n+2+2\alpha}{(m+2)(\alpha+2)}}r^{n-1}\md r,
\end{split}
\end{equation}
here we have noted the fact of
 \[n-1-\frac{(m+2)n+2+2\alpha}{(m+2)(\alpha+2)}>n-1-\frac{(m+2)n}{m+2}=-1, \quad \text{for} -1<\alpha<0\]
 and $n\geq2$, thus the integral in last line of \eqref{equ:4.64} is convergent.
 In order to give a precise estimate
to \eqref{equ:4.64}, denoting \(\sigma=n-\frac{(m+2)n+2+2\alpha}{(m+2)(\alpha+2)}\), then the integral in \eqref{equ:4.64} can be controlled by
\begin{equation}\label{equ:4.64-1}
	\begin{split}
		&\int_0^1(1+\phi_m(t)r)^{-\frac{n}{2}+\frac{1}{m+2}}r^{-1+\sigma}\md r \\
		&=\sigma^{-1}\phi_m(t)^{-\frac{n}{2}+\frac{1}{m+2}}+\frac{(m+2)n-2}{2\sigma(m+2)}\int_0^1\left(1+\phi_m(t)r\right)^{-\frac{n}{2}+\frac{1}{m+2}-1}
		r^\sigma\md r
	\end{split}
\end{equation}
For the last integral in \eqref{equ:4.64-1}, note that
\[
\begin{split}
	\left(1+\phi_m(t)r\right)^{-\frac{n}{2}+\frac{1}{m+2}-1}r^\sigma
&=(1+\phi_m(t)r)^{-\frac{n}{2}+\frac{1}{m+2}}r^{-1+\sigma}\frac{r}{1+\phi_m(t)r} \\
&\leq\phi_m(t)^{-1}(1+\phi_m(t)r)^{-\frac{n-1}{2}-\frac{m}{2(m+2)}}r^{-1+\sigma},
\end{split}
 \]
for every fixed \((n,m,\alpha)\), \(\frac{(m+2)n+2}{2\sigma(m+2)}\) is a given positive constant, therefore without loss of generalirity, we can assume \(\phi_m(t)\geq\phi_m(T)\geq\frac{(m+2)n+2}{\sigma(m+2)}\), otherwise the estimate \eqref{equ:4.39} can be established as the relatively small time case in Section 3.3.2. Then we have
\begin{equation*}
\begin{split}
&\int_0^1(1+\phi_m(t)r)^{-\frac{n}{2}+\frac{1}{m+2}}r^{-1+\sigma}\md r\leq\frac{\phi_m(t)^{-\frac{n}{2}+\frac{1}{m+2}}}{\sigma}
+\frac{1}{2}\int_0^1\left(1+\phi_m(t)r\right)^{-\frac{n}{2}+\frac{1}{m+2}}r^{-1+\sigma}\md r,
\end{split}
\end{equation*}
which implies
\begin{equation}\label{equ:4.65}
	\begin{split}
		&\left| \theta e^{-\theta^2}\int_{|\xi|\leq1}\int_{\mathbb{R}^n}e^{i\{(x-y)\cdot\xi-[\phi_m(t)-|y|]|\xi|\}} \big(1+\phi_m(t)|\xi|\big)^{-\frac{m}{2(m+2)}}g(y)\frac{\md\xi}{|\xi|^z}\md y \right| \\
		&\leq\frac{C}{\sigma}\phi_m(t)^{-\frac{n}{2}+\frac{1}{m+2}}t^{\frac{\alpha}{q_0}}\| g\|_{L^1(\mathbb{R}^n)}
	\end{split}
\end{equation}
Thus combining \eqref{equ:4.62} and \eqref{equ:4.65} yields \eqref{equ:4.59} with a constant $C>0$ depends on $n$, $m$ and $\alpha$.

To get \eqref{equ:4.60}, the low frequencies and high frequencies will be treated separately.
As in \cite{Gls1}, we shall use Sobolev trace theorem to handle the low frequency part.
More specifically, we first introduce a function $\rho\in C^\infty(\mathbb{R}^n)$ such that
\begin{equation*}
\rho(\xi)=
\left\{ \enspace
\begin{aligned}
1, \quad &|\xi|\geq2, \\
0, \quad &|\xi|\leq1.
\end{aligned}
\right.
\end{equation*}

For $\varrho=1+\nu$, let
$$(\mathcal{T}_zg)(t,x)=(R_zg)(t,x)+(S_zg)(t,x),$$
where
\begin{align}\label{equ:4.115}
(R_zg)(t,x)=&\Big(z-\frac{(m+2)n+2+2\alpha}{(m+2)(\alpha+2)}\Big)e^{z^2}t^{\frac{\alpha}{q_0}}
\int_{\mathbb{R}^n}\int_{\frac{1}{2}\phi_m(1)\leq|y|\leq\phi_m(2)}e^{i\{(x-y)\cdot\xi-[\phi_m(t)-|y|]|\xi|\}} \no\\
&\qquad\qquad \qquad\times \big(1+\phi_m(t)|\xi|\big)^{-\frac{m}{2(m+2)}}
\Big(1-\rho\big(\phi_m(t)^{1-\varrho}\delta^\varrho\xi\big)\Big)
g(y)\frac{\md\xi}{|\xi|^z}\md y,\no\\
(S_zg)(t,x)=&\Big(z-\frac{(m+2)n+2+2\alpha}{(m+2)(\alpha+2)}\Big)e^{z^2}t^{\frac{\alpha}{q_0}}
\int_{\mathbb{R}^n}\int_{\frac{1}{2}\phi_m(1)\leq|y|\leq\phi_m(2)}e^{i\{(x-y)\cdot\xi-[\phi_m(t)-|y|]|\xi|\}}  \no\\
&\qquad\qquad \qquad \qquad\times \big(1+\phi_m(t)|\xi|\big)^{-\frac{m}{2(m+2)}}\rho\big(\phi_m(t)^{1-\varrho}\delta^\varrho\xi\big)g(y)\frac{\md\xi}{|\xi|^z}\md y.
\end{align}
Note that for \(Rez=0\), the integral in \(R_zg\) and \(S_zg\) are the same as the integral in \cite[(4-44)]{HWY4}, thus \cite[(4-45)-(4-46)]{HWY4} imply

\begin{equation}
\| (R_zg)(t,\cdot)\|_{L^2(\mathbb{R}^n)}\leq C\phi_m(t)^{\frac{2\alpha}{(m+2)q_0}-\frac{m}{2(m+2)}}(\phi_m(t)^{\varrho-1}\delta^{-\varrho})^{\frac{1}{m+2}}\| g\|_{L^2(\mathbb{R}^n)}
\quad Rez=0,
\label{equ:4.66}
\end{equation}
and
\begin{equation}
\begin{split}
\| S_zg(t,\cdot)&\|_{L^2(\{x:\delta\leq\phi_m(t)-|x|\leq2\delta\})} \\
\leq & C\phi_m(t)^{\frac{2\alpha}{(m+2)q_0}-\frac{m}{2(m+2)}}(\phi_m(t)^{\varrho-1}\delta^{-\varrho})^{\frac{1}{m+2}}\| g\|_{L^2(\mathbb{R}^n)}, \qquad Rez=0.
\end{split}
\label{equ:4.67}
\end{equation}
Note that \eqref{equ:4.66} together with \eqref{equ:4.67} yields \eqref{equ:4.60}. Interpolation between \eqref{equ:4.59} and \eqref{equ:4.60} implies \eqref{equ:4.56} and further

\begin{equation}
\begin{split}
	\| (\mathcal{T}g)(t,\cdot) &\|_{L^{q_0}(\{x:\delta\phi_m(1)\leq\phi_m(t)-|x|\leq2\delta\phi_m(1)\})}\lesssim \phi_m(t)^{\frac{\nu}{2}-\frac{m+4}{q_0(m+2)}}\delta^{-\frac{\nu}{2}
-\frac{1}{q_0}}
\| g\|_{L^{\frac{q_0}{q_0-1}}(\mathbb{R}^n)}, \end{split}
\label{equ:4.57-1}
\end{equation}
with \(-1<\alpha<0\).

\paragraph{Case (iii-2) \(0\leq\alpha\leq m\)} If \(0<|\xi|<1\), then the analysis of estimating  $\|\mathcal{T}g\|_{L_x^{q_0}}$ is the same as Case (iii-1) via the formula of  \(\mathcal{T}_zg\) in \eqref{equ:H.1}, and we omit the detials. However, for the case \(|\xi|\geq1\), \eqref{equ:4.8} implies that
\[\partial_\xi^\kappa b(t,\xi)\leq C\phi_m(t)^{-\frac{m}{2(m+2)}}|\xi|^{-1-|\kappa|} \]
which motivate us to consider for \(z\in\mathbb{C}\)
\begin{equation}
\begin{split}
(\tilde{\mathcal{T}}_zg)(t,x)=&\left(z-\frac{(m+2)n+2+2\alpha}{2(\alpha +2)}\right)e^{z^2}t^{\frac{\alpha}{q_0}}\phi_m(t)^{-\frac{m}{2(m+2)}}  \\
&\times \int_{\mathbb{R}^n}\int_{\mathbb{R}^n}
e^{i\{(x-y)\cdot\xi-[\phi_m(t)-|y|]|\xi|\}} g(y)\frac{\md\xi}{|\xi|^z}\md y,
\end{split}
\label{equ:H.2}
\end{equation}
We now prove \eqref{equ:4.59} by the stationary phase method.
To this end, for
$z=\frac{(m+2)n+2+2\alpha}{2(\alpha+2)}+i\theta$ with $\theta\in\Bbb R$, we have for all \(0\leq\alpha\leq m\),
\[-\frac{(m+2)n+2+2\alpha}{2(\alpha +2)}<-\frac{(m+2)n+2+2m}{2(m+2)}<-\frac{(m+2)n+2}{2(m+2)},\]
thus there exists \(\sigma>0\) such that
\[-\frac{(m+2)n+2+2\alpha}{2(\alpha +2)}<-\frac{(m+2)n+2}{2(m+2)}-\sigma,\]
hence
\begin{equation}
\begin{split}
&\left| \theta e^{-\theta^2}\int_{\mathbb{R}^n}\int_{|\xi|\geq1}e^{i\{(x-y)\cdot\xi-[\phi_m(t)-|y|]|\xi|\}} t^{\frac{\alpha}{q_0}}\big(1+\phi_m(t)|\xi|\big)^{-\frac{m}{2(m+2)}}g(y)\frac{\md\xi}{|\xi|^z}\md y \right| \\
&\leq C\big(\phi_m(t)-\phi_m(2)\big)^{-\frac{n-1}{2}}\phi_m(t)^{-\frac{m}{2(m+2)}}
t^{\frac{\alpha}{q_0}}\int_{|\xi|\geq1} |\xi|^{-\frac{n-1}{2}-\frac{m}{2(m+2)}}
|\xi|^{-\frac{(m+2)n+2}{2(m+2)}-\sigma}\md \xi \| g\|_{L^1(\mathbb{R}^n)} \\
&\leq C\phi_m(t)^{-\frac{n-1}{2}-\frac{m}{2(m+2)}}t^{\frac{\alpha}{q_0}}\int_1^\infty r^{-1-\sigma}\md r\| g\|_{L^1(\mathbb{R}^n)} \\
& \leq C\phi_m(t)^{-\frac{n-1}{2}-\frac{m}{2(m+2)}}t^{\frac{\alpha}{q_0}}
\| g\|_{L^1(\mathbb{R}^n)}.
\end{split}
\label{equ:4.62-1}
\end{equation}
Thus
\begin{equation}
\| (\tilde{\mathcal{T}}_zg)(t,\cdot)\|_{L^\infty(\mathbb{R}^n)}\leq Ct^{\frac{\alpha}{q_0}}\phi_m(t)^{-\frac{n-1}{2}-\frac{m}{2(m+2)}}\| g\|_{L^1(\mathbb{R}^n)},  Rez=\frac{(m+2)n+2+2\alpha}{2(\alpha +2)}.
\label{equ:4.59-1}
\end{equation}
While for the \(L^2\) estimate, like the case \(-1<\alpha<0\), one need to handle
\begin{align}\label{equ:4.116}
(\tilde{R}_zg)&(t,x)=\Big(z-\frac{(m+2)n+2+2\alpha}{2(\alpha +2)}\Big)e^{z^2}
t^{\frac{\alpha}{q_0}}\phi_m(t)^{-\frac{m}{2(m+2)}}\no\\
&\times
\int_{|\xi|\geq1}\int_{\frac{1}{2}\phi_m(1)\leq|y|\leq\phi_m(2)}e^{i\{(x-y)\cdot\xi-[\phi_m(t)-|y|]|\xi|\}} \Big(1-\rho\big(\phi_m(t)^{1-\varrho}\delta^\varrho\xi\big)\Big)
g(y)\frac{\md\xi}{|\xi|^z}\md y,\no\\
(\tilde{S}_zg)&(t,x)=\Big(z-\frac{(m+2)n+2+2\alpha}{2(\alpha +2)}\Big)e^{z^2}
t^{\frac{\alpha}{q_0}}\phi_m(t)^{-\frac{m}{2(m+2)}}\no\\
&\times \int_{|\xi|\geq1}\int_{\frac{1}{2}\phi_m(1)\leq|y|\leq\phi_m(2)}e^{i\{(x-y)\cdot\xi-[\phi_m(t)-|y|]|\xi|\}} \rho\big(\phi_m(t)^{1-\varrho}\delta^\varrho\xi\big)g(y)\frac{\md\xi}{|\xi|^z}\md y.
\end{align}

It follows from Lemma~\ref{lem:a3} and  Lemma~\ref{lem:a4} in Appendix that
\begin{equation}
\| (\tilde{R}_zg)(t,\cdot)\|_{L^2(\mathbb{R}^n)}\leq C\phi_m(t)^{\frac{2\alpha}{(m+2)q_0}-\frac{m}{2(m+2)}}(\phi_m(t)^{\varrho-1}\delta^{-\varrho})^{\frac{1}{2}}\| g\|_{L^2(\mathbb{R}^n)}
\quad Rez=0,
\label{equ:4.66-1}
\end{equation}
and
\begin{equation}
\begin{split}
\| \tilde{S}_zg(t,\cdot)&\|_{L^2(\{x:\delta\leq\phi_m(t)-|x|\leq2\delta\})} \\
\leq & C\phi_m(t)^{\frac{2\alpha}{(m+2)q_0}-\frac{m}{2(m+2)}}(\phi_m(t)^{\varrho-1}\delta^{-\varrho})^{\frac{1}{2}}\| g\|_{L^2(\mathbb{R}^n)}, \qquad Rez=0.
\end{split}
\label{equ:4.67-1}
\end{equation}
\eqref{equ:4.66-1} together with \eqref{equ:4.67-1} yield
\begin{equation}
\| (\tilde{\mathcal{T}}_zg)(t,\cdot)\|_{L^2(\mathbb{R}^n)}\leq Ct^{\frac{\alpha}{q_0}}\phi_m(t)^{-\frac{m}{2(m+2)}}
(\phi_m(t)^{\nu}\delta^{-(\nu+1)})^{\frac{1}{2}}\| g\|_{L^2(\mathbb{R}^n)}, \quad Rez=0.
\label{equ:4.60-1}
\end{equation}
Interpolation between \eqref{equ:4.59-1} with \eqref{equ:4.60-1} gives
\[
\begin{split}
	\| (\tilde{\mathcal{T}}_{z}g)(t,\cdot) &\|_{L^{q_0}(\{x:\delta\phi_m(1)\leq\phi_m(t)-|x|\leq2\delta\phi_m(1)\})} \\
	&\leq C\phi_m(t)^{\frac{\nu}{q_0}-\frac{m+4}{q_0(m+2)}}\delta^{-\frac{\nu}{q_0}
-\frac{1}{q_0}}
\| g\|_{L^{\frac{q_0}{q_0-1}}(\mathbb{R}^n)}, \quad Rez=1,
\end{split}
\]
then \eqref{equ:4.57-1} can be established for \(0<\alpha\leq m\) and the proof for the medium \(\delta\) case is finished.

\textbf{Collecting all the analysis above in Case (i)- Case (iii), \eqref{equ:4.39} is proved for the relatively large times.}

\vspace{4mm}

\textbf{Combining the results in Section 3.3 and Section 3.4, we have obtained \eqref{equ:4.34}, therefore \eqref{equ:3.3} is established.}

\section{The proof of Theorem 1.3 at the end point \(\mathbf{q=2}\)}

\subsection{Simplifications for the end point estimate}
\label{sect: H}

In this section we establish another endpoint estimate \eqref{equ:3.4.1} for  \(q=2\) in Theorem~\ref{thm:inhomogeneous estimate}.
Suppose that $w$ solves \eqref{Y-3}, where $F\equiv0$ if $\phi_m(t)-|x|<1$. Then by Theorem 2.1 of \cite{Yag2}, we have
\[\| w(t,\cdot)\|_{L^2(\mathbb{R}^n)}\leq Ct\int_{T_0}^t\| F(s,\cdot)\|_{L^2(\mathbb{R}^n)}\md s,\]
which yields that for ${T_0}\leq t\leq5$ the \(L^2\) estimate,
\[\| w\|_{L^2([{T_0},5]\times\mathbb{R}^n)}\leq C\| F\|_{L^2([{T_0},5]\times\mathbb{R}^n)}.\]

Note that $\phi_m(t)-|x|$ is bounded from below and above when ${T_0}\leq t\leq5$, hence for any $\nu>0$,

\begin{equation}
\begin{split}
\Big\|\big(\phi^2_m(t)-|x|^2\big)^{-\frac{1}{2}+\frac{m-\alpha}{m+2}-\nu}t^{\frac{\alpha}{2}}w\Big\|_{L^2([{T_0},5]\times\mathbb{R}^n)}\leq C \Big\|\big(\phi^2_m(t)-|x|^2\big)^{\frac{1}{2}+\nu}t^{-\frac{\alpha}{2}}F\Big\|_{L^2(\ra)}.
\end{split}
\label{E1}
\end{equation}

Next we suppose $T\geq5$. As in Section 3.2, we make the decomposition \(F=F^0+F^1\) with \(F^0\) defined in \eqref{equ:4.30}, and split $w$ as $w=w^0+w^1$, where for $j=0,1$, \((\partial_t^2-t^m \Delta)w^j =F^j\) with zero data. Then in order to prove \eqref{equ:3.4.1}, it suffices to show that for $j=0$, $1$,

\begin{equation}
\begin{split}
\Big\|\big(\phi^2_m(t)-|x|^2\big)^{-\frac{1}{2}+\frac{m-\alpha}{m+2}-\nu}&t^{\frac{\alpha}{2}}w^j\Big\|_{L^2(\{(t,x):T\leq t\leq 2T\})} \\
&\leq C\phi_m(T)^{-\frac{\nu}{4}}\Big\|\big(\phi^2_m(t)-|x|^2\big)^{\frac{1}{2}+\nu}t^{-\frac{\alpha}{2}}F^j\Big\|_{L^2(\ra)}.
\end{split}
\label{equ:5.2}
\end{equation}

Note that by the analogous treatment on $w^j$ as in \eqref{equ:4.34}-\eqref{equ:4.39},
\eqref{equ:5.2} will follow from
\begin{equation}
\begin{split}
	&\phi_m(T)^{-\frac{1}{2}+\frac{m-\alpha}{m+2}-\frac{\nu}{2}}\delta^{-\frac{1}{2}+\frac{m-\alpha}{m+2}+\frac{\nu}{2}}\lp t^{\frac{\alpha}{2}}v\rp_{L^2(D_{t,x}^{T,\delta})}
	\leq C\delta_0^{\frac{1}{2}}\lp s^{-\frac{\alpha}{2}}G\rp_{L^2(\rb)},
\end{split}
\label{equ:5.7}
\end{equation}
where \(D_{t,x}^{T,\delta}\) was defined in \eqref{equ:D-tx} and $\operatorname{supp}G\subseteq D_{s,y}^{\delta_0}=\{(s,y): 1\leq s\leq2, \delta_0\phi_m(1)\leq\phi_m(s)-|y|\leq2\delta_0\phi_m(1)\}$, and $\delta\geq\delta_0$.
Next we focus on the proof of \eqref{equ:5.7}. For technical reason, we will first treat the ``relatively large time" case and establish \(L^2\) estimate for \(v^1\). Then the estimate for \(v^0\) follows with similar idea but easier computation.

\subsection{Estimate for large times}
Note that $\phi_m(T)\geq10\phi_m(2)$
holds for $(t,x)\in \operatorname{supp} v^1$. As in Section \ref{sec4},
we shall deal with the estimates according to the different scales of \(\delta\).

\paragraph{The case of $\mathbf{\delta\geq10\phi_m(2)}$}

As in Subsection \ref{sec4:large:ii}, we shall use the pointwise estimate to handle the case of $\phi_m(t)-|x|\geq\delta\geq10\phi_m(2)$.
We now write
\[v=\sum_{j=-\infty}^{\infty}v_j=\sum_{j=-\infty}^{\infty}\iint_{D_{s,y}^{\delta_0}} K_j(t,x;s,y)G(s,y)dyds,\]
where
\begin{equation*}
 K_j(t,x;s,y)=\int_{\mathbb{R}^n}e^{i\{(x-y)\cdot\xi-[\phi_m(t)-\phi_m(s)]|\xi|\}}
\chi\Big(\frac{|\xi|}{2^j}\Big)a(t,s,\xi)\hat{G}(s,\xi) \md\xi.
\end{equation*}

Let \(\epsilon>0\) be a sufficiently small constant, then by \eqref{equ:4.51} and H\"{o}lder's inequality, we arrive at
\begin{equation}
	\begin{split}
		|t^{\frac{\alpha}{2}}v_j|\leq &\lp t^{\frac{\alpha}{2}}K_j(t,x;s,y)(\phi_m(t)+|x|)^{\frac{1}{2}}(\phi_m(t)-|x|)^{\frac{1}{2}-\frac{1}{m+2}-\epsilon}s^{\frac{\alpha}{2}}\rp_{L^2_{s,y}(D_{s,y}^{\delta_0})} \\
		&\times\lp(\phi_m(t)+|x|)^{-\frac{1}{2}}(\phi_m(t)-|x|)^{-\frac{1}{2}+\frac{1}{m+2}+\epsilon}s^{-\frac{\alpha}{2}}G(s,y)\rp_{L^2_{s,y}(D_{s,y}^{\delta_0})}.
	\end{split}
\end{equation}
Taking $N=\f{n}{2}-\f{1}{m+2}$ in \eqref{equ:4.51} and repeating the computations of \eqref{equ:4.52} and \eqref{equ:4.53}, we have
\begin{equation*}
\begin{split}
&\lp t^{\frac{\alpha}{2}} K_j(t,x;s,y)(\phi_m(t)+|x|)^{\frac{1}{2}}(\phi_m(t)-|x|)^{\frac{1}{2}-\frac{1}{m+2}-\epsilon}s^{\frac{\alpha}{2}}\rp_{L^2_{s,y}(D_{s,y}^{\delta_0})} \\
&\leq C\delta_0^{\frac{1}{2}}\phi_m(t)^{-\frac{n-1}{2}+\frac{1+\alpha}{m+2}}(\phi_m(t)-|x|)^{-\frac{n}{2}+\frac{1}{2}-\epsilon}
\end{split}
\end{equation*}
and
\[
\begin{split}
	&\left(\iint_{D_{s,y}^{\delta_0}}\big\{(\phi_m(t)+|x|)^{-\frac{1}{2}}(\phi_m(t)-|x|)^{-\frac{1}{2}+\frac{1}{m+2}+\epsilon}s^{-\frac{\alpha}{2}}G(s,y)\big\}^2\md y\md s\right)^{\frac{1}{2}} \\
	&\leq C\big(\delta\phi_m(T)\big)^{-\frac{1}{2}}\delta^{\frac{1}{m+2}+\epsilon}
\lp s^{-\frac{\alpha}{2}}G\rp_{L^2(\rb)}.
\end{split}
\]

In addition, by $-\frac{n-1}{2}-\epsilon <-\frac{1}{2}$ for $n\geq2$, direct computation yields
\begin{equation*}
\begin{split}
\Big\|&\phi_m(t)^{-\frac{n-1}{2}+\frac{1+\alpha}{m+2}}\big(\phi_m(t)-|x|\big)^{-\frac{n-1}{2}-\epsilon}\Big\|_{L_{t,x}^2(D_{t,x}^{T,\delta})} \\
&\leq C_n\lc\int_T^{2T}\phi_m(t)^{-(n-1)+\frac{2(1+\alpha)}{m+2}}\int_0^{\phi_m(t)-10\phi_m(2)}
(\phi_m(t)-r)^{-2\epsilon -(n-1)}r^{n-1}\md r\md t\rc^{\frac{1}{2}} \\
&\leq C\lc\int_T^{2T}\phi_m(t)^{\frac{2(1+\alpha)}{m+2}}\md t\rc^{\frac{1}{2}}\leq CT^{1+\frac{\alpha}{2}}.
\end{split}
\end{equation*}
Thus we obtain
\begin{equation}
\begin{split}
&\phi_m(T)^{-\frac{1}{2}+\frac{m-\alpha}{m+2}-\frac{\nu}{2}}\delta^{-\frac{1}{2}+\frac{m-\alpha}{m+2}+\frac{\nu}{2}}\lp t^{\frac{\alpha}{2}}v\rp_{L^2(D_{t,x}^{T,\delta})} \\
&\leq\phi_m(T)^{-\frac{1}{2}+\frac{m-\alpha}{m+2}-\frac{\nu}{2}}
\delta^{-\frac{1}{2}+\frac{m-\alpha}{m+2}+\frac{\nu}{2}}\delta_0^{\frac{1}{2}}
\big(\delta\phi_m(T)\big)^{-\frac{1}{2}}\delta^{\frac{1}{m+2}+\epsilon}
T^{1+\frac{\alpha}{2}}\lp t^{-\frac{\alpha}{2}}G\rp_{L^2(\rb)} \\
&\leq C\delta_0^{\frac{1}{2}}\left(\frac{\delta}{\phi_m(t)} \right)^{\frac{\nu}{2}} \delta^{-\frac{\alpha+1}{m+2}+\epsilon} \lp t^{-\frac{\alpha}{2}}G\rp_{L^2(\rb)}.
\end{split}
\label{E2}
\end{equation}
By the condition \(\alpha>-1\), we have \(-\frac{\alpha+1}{m+2}<0\), thus there exists a \(\epsilon>0\) small enough such that
\[-\frac{\alpha+1}{m+2}+\epsilon <0,\]
then by \(\phi_m(t)\gtrsim \delta\geq10\phi_m(2)\), \eqref{equ:5.7} is proved.

\paragraph{The case of $\mathbf{\delta_0\leq\delta\leq10\phi_m(2)}$}\label{sec4:w0:small}

Next we study \eqref{equ:5.7} under the condition $\phi_m(t)-|x|\leq10\phi_m(2)$.
At first, we claim that under certain restrictions on the variable $\xi$,
this situation can be treated as in the proof of \eqref{equ:4.60} in Section 3.
Indeed, recalling \eqref{D4}, \(v\) can be written as:
\begin{equation*}
\begin{split}
v=&\int_{\rb}\int_{\Bbb R^n}e^{i\{(x-y)\cdot\xi-[\phi_m(t)-\phi_m(s)]|\xi|\}}a(t,s,\xi)G(s,y) \md\xi\md y\md s.
\end{split}
\end{equation*}
where \(a(t,s,\xi)\) satisfying \eqref{equ:3.24}. Noting that $t\geq s\gtrsim 1$, then we can assume
\begin{equation}\label{equ:v-L2}
	v=\int_1^2\int_{\mathbb{R}^n}\int_{\mathbb{R}^n}e^{i\{(x-y)\cdot\xi-[\phi_m(t)-\phi_m(s)]|\xi|\}}
\phi_m(t)^{-\frac{m}{2(m+2)}}|\xi|^{-1}G(s,y)\md y\md\xi \md s.
\end{equation}
As in the proof of \eqref{equ:4.60}, we again split $v$ into a low
frequency part and a high frequency part respectively.
To this end, we choose a function $\beta\in C_0^\infty(\mathbb{R}^n)$ satisfying $\beta=1$ near the origin
such that $t^{\frac{\alpha}{2}}v=v_0+v_1$, where
\begin{equation*}
\begin{split}
v_1&=\int_1^2\int_{\mathbb{R}^n}\int_{\mathbb{R}^n}e^{i\{(x-y)\cdot\xi-[\phi_m(t)-\phi_m(s)]|\xi|\}}
\phi_m(t)^{-\frac{m}{2(m+2)}}t^{\frac{\alpha}{2}}\frac{1-\beta(\delta\xi)}{|\xi|}G(s,y)\md y\md\xi \md s.\\
\end{split}
\end{equation*}
If we set \(\phi_m(s)=|y|+\tau\) and use H\"{o}lder's inequality as in \eqref{equ:4.55}, then
\begin{equation*}
\begin{split}
|v_1|\leq& C\delta_0^{\f{1}{2}}\bigg(\int_{\delta_0}^{2\delta_0}\bigg|\phi_m(t)^{-\frac{m}{2(m+2)}}t^{\frac{\alpha}{2}} \\
&\times\int_{\mathbb{R}^n}\int_{\mathbb{R}^n}e^{i\{(x-y)\cdot\xi-[\phi_m(t)-|y|-\tau]|\xi|\}}\frac{1-\beta(\delta\xi)}{|\xi|}G(\phi_m^{-1}(|y|+\tau) ,y)\md y\md\xi\bigg|^2
\md \tau\bigg)^{\f{1}{2}} \\
=&:C\delta_0^{\f{1}{2}}\bigg(\int_{\delta_0}^{2\delta_0}|T_1(t,\tau,\cdot)|^2\md \tau\bigg)^{\f{1}{2}}.
\end{split}
\end{equation*}
Note $\frac{1-\beta(\delta\xi)}{|\xi|}=O(\delta)$. Then the expression of $T_1$ is similar to \eqref{equ:H.2} with $Rez=0$, with this observation we apply the method of \eqref{equ:4.60-1} to get
\[\| T_1(t,\tau,\cdot)\|_{L^2(\mathbb{R}^n)}\leq C\big(\phi_m(t)-\tau\big)^{\frac{\nu}{2}-\frac{m}{2(m+2)}}t^{\frac{\alpha}{2}}
\delta^{-\frac{\nu+1}{2}+1}\| G(s,\cdot)\|_{L^2(\mathbb{R}^n)},\]
which derives
\begin{equation}\label{Y-5}
\| v_1\|_{L^2}\leq C\delta_0^{\frac{1}{2}}\delta^{-\frac{\nu+1}{2}+1}\phi_m(T)^{\frac{\nu}{2}-\frac{m-2}{2(m+2)}}T^{\frac{\alpha}{2}}\| G\|_{L^2(\rb)}.
\end{equation}
Due to the condition $\delta\lesssim 10\phi_m(2)$, the estimate \eqref{equ:5.7} for $v_1$ follows immediately from \eqref{Y-5}.

We now estimate $v_0$. At first, one notes that
\begin{equation}\label{equ:5.15}
\begin{split}
&\Big|\int_{|\xi|\leq1}e^{i\{(x-y)\cdot\xi-[\phi_m(t)-\phi_m(s)]|\xi|\}}\phi_m(t)^{-\frac{m}{2(m+2)}}t^{\frac{\alpha}{2}}\frac{\beta(\delta\xi)}{|\xi|}\md\xi\Big| \\
&\le C\big(1+\big|\phi_m(t)-\phi_m(s)\big|\big)^{-\frac{n-1}{2}}\phi_m(t)^{-\frac{m}{2(m+2)}}t^{\frac{\alpha}{2}} \\
&\le C\big(1+|x-y|\big)^{-\frac{n-1}{2}}\phi_m(t)^{-\frac{m}{2(m+2)}}t^{\frac{\alpha}{2}}.
\end{split}
\end{equation}
In the last step of \eqref{equ:5.15} we have used the fact $\phi_m(t)-\phi_m(s)\geq|x-y|$ for any $(s,y)\in \operatorname{supp}F$ and $(t,x)\in \operatorname{supp}w$. This condition can be derived from
the formula in Theorem 2.4 of \cite{Yag3}, see Section 5B2 in our former work \cite{HWY4} for details.

Note that the corresponding inequality \eqref{equ:5.7} holds when we replace $v$ by
\[v_{01}=\int_1^2\int_{\mathbb{R}^n}\int_{|\xi|\leq1}e^{i\{(x-y)\cdot\xi-[\phi_m(t)-\phi_m(s)]|\xi|\}}\phi_m(t)^{-\frac{m}{2(m+2)}}
\frac{\beta(\delta\xi)}{|\xi|}G(s,y)\md y\md\xi \md s.\]

It follows from method of stationary phase and direct computation that
\begin{equation}\label{equ:5.16}
\begin{split}
 &\lp t^{\frac{\alpha}{2}}v_{01}\rp_{L^2(D_{t,x}^{T,\delta})} \\
&\leq\lp\iiint_{|\xi|\leq1}e^{i\{(x-y)\cdot\xi-[\phi_m(t)-\phi_m(s)]|\xi|\}}t^{\frac{\alpha}{2}}\phi_m(t)^{-\frac{m}{2(m+2)}}\frac{\beta(\delta\xi)}{|\xi|}\md\xi
G(s,y)\md y\md s\rp_{L^2_{t,x}(D_{t,x}^{T,\delta})} \\
&\leq C\lp\iint\big(1+\big|x-y\big|\big)^{-\frac{n-1}{2}}t^{\frac{\alpha}{2}}\phi_m(t)^{-\frac{m}{2(m+2)}}G(s,y)\md y\md s\rp_{L^2_{t,x}(D_{t,x}^{T,\delta})} \\
&\leq C\phi_m(T)^{-\frac{m}{2(m+2)}}T^{\frac{\alpha}{2}}\lp\lp\big(1+\big|x-y\big|\big)^{-\frac{n-1}{2}}s^{\frac{\alpha}{2}}\rp_{L^2_{s,y}(D_{s,y}^{\delta_0})}\lp s^{-\frac{\alpha}{2}}G\rp_{L^2_{s,y}(D_{s,y}^{\delta_0})}\rp_{L^2_{t,x}(D_{t,x}^{T,\delta})} \\
&\lesssim \phi_m(T)^{\frac{\alpha}{m+2}-\frac{m}{2(m+2)}}\| s^{-\frac{\alpha}{2}}G\|_{L^2_{s,y}(D_{s,y}^{\delta_0})} \\
&\quad\times\bigg\|\bigg(\int_T^{2T}\int_{\delta\phi_m(1)\leq\phi_m(t)-|x|\leq2\delta\phi_m(1)}
\frac{\md x\md t}{\big(1+|x-y|\big)^{n-1}}\bigg)^{\frac{1}{2}}\bigg\|_{L^2_{s,y}(D_{s,y}^{\delta_0})}.
\end{split}
\end{equation}
By $|y|\leq\phi_m(2)$, we see that $\frac{1}{2}|x|\leq|x-y|\leq2|x|$ holds if $|x|\geq2\phi_m(2)$. On the other hand, if $|x|<2\phi_m(2)$,
then the integral with respect to the variable $x$ in last line of  in \eqref{equ:5.16}
must be convergent and can be controlled by $\delta$. This yields
\begin{equation*}
\begin{split}
\bigg\|\bigg(\int_T^{2T}&\int_{\delta\phi_m(1)\leq\phi_m(t)-|x|\leq2\delta\phi_m(1)}
\frac{\md x\md t}{\big(1+|x-y|\big)^{n-1}}\bigg)^{\frac{1}{2}}\bigg\|_{L^2_{s,y}(D_{s,y}^{\delta_0})}
  \\
& \leq C\bigg\|\bigg(\int_T^{2T}\int_{\phi_m(t)-2\delta}^{\phi_m(t)-\delta}\md r\md t\bigg)^{\frac{1}{2}}\bigg\|_{L^2_{s,y}(D_{s,y}^{\delta_0})}\leq C(\delta_0\delta T)^{\frac{1}{2}},
\end{split}
\end{equation*}
which implies that the left side of \eqref{equ:5.7} can be controlled by
\begin{equation}
\begin{split}
&\phi_m(T)^{-\frac{1}{2}+\frac{m-\alpha}{m+2}-\frac{\nu}{2}}\delta^{-\frac{1}{2}+\frac{m-\alpha}{m+2}
+\frac{\nu}{2}}
\phi_m(T)^{\frac{\alpha}{m+2}-\frac{m}{2(m+2)}}(\delta_0\delta T)^{\frac{1}{2}}\lp  s^{-\frac{\alpha}{2}}G\rp_{L^2_{s,y}(D_{s,y}^{\delta_0})} \\
&\leq C\delta^{\frac{m-\alpha}{m+2}}\delta_0^{\frac{1}{2}}\lp  s^{-\frac{\alpha}{2}}G\rp_{L^2_{s,y}(D_{s,y}^{\delta_0})}\leq C\delta_0^{\frac{1}{2}}\lp  s^{-\frac{\alpha}{2}}G\rp_{L^2(\rb)}.
\end{split}
\label{equ:A}
\end{equation}
In the last inequality we have used \(\delta\lesssim 10\phi_m(2)\) and \(\alpha\leq m\).

Consequently, the proof of \eqref{equ:5.7}  will be completed if we could show that
\begin{equation}
\begin{split}
&\phi_m(T)^{-\frac{1}{2}+\frac{m-\alpha}{m+2}-\frac{\nu}{2}}\delta^{-\frac{1}{2}+\frac{m-\alpha}{m+2}
+\frac{\nu}{2}}\lp t^{\frac{\alpha}{2}}v_{02}\rp_{L^2(D_{t,x}^{T,\delta})}\leq C\delta_0^{\frac{1}{2}}\lp s^{-\frac{\alpha}{2}}G\rp_{L^2(\ra)},
\end{split}
\label{equ:5.10}
\end{equation}
where
\[v_{02}=\int_1^2\int_{\mathbb{R}^n}\int_{|\xi|\geq1}e^{i\{(x-y)\cdot\xi-[\phi_m(t)-\phi_m(s)]|\xi|\}}\phi_m(t)^{-\frac{m}{2(m+2)}}
\frac{\beta(\delta\xi)}{|\xi|}G(s,y)\md y\md\xi \md s.\]

The first step in proving \eqref{equ:5.10} is to notice that
\begin{equation*}
\begin{split}
&\lp t^{\frac{\alpha}{2}}v_{02}\rp_{L^2(D_{t,x}^{T,\delta})}\leq\lp \int\parallel\check{T}G\parallel_{L^2(\{x:\delta\phi_m(1)\leq\phi_m(t)-|x|\leq2\delta\phi_m(1)\})}ds
\rp_{L^2(\{t:\frac{T}{2}\leq t\leq T\})},
\end{split}
\end{equation*}
where
\[\check{T}G=\int\int_{|\xi|\geq1}e^{i\{(x-y)\cdot\xi-[\phi_m(t)-\phi_m(s)]|\xi|\}}\phi_m(t)^{-\frac{m}{2(m+2)}}t^{\frac{\alpha}{2}}
\frac{\beta(\delta\xi)}{|\xi|}G(s,y)\md y\md\xi.\]
To estimate $\parallel\check{T}G\parallel_{L^2(\{x:\delta\phi_m(1)\leq\phi_m(t)-|x|\leq2\delta\phi_m(1)\})}$,
it follows from Lemma \ref{lem:a1} and direct computation
\begin{equation}
\begin{split}
&\phi_m(T)^{-\frac{1}{2}+\frac{m-\alpha}{m+2}-\frac{\nu}{2}}\delta^{-\frac{1}{2}+\frac{m-\alpha}{m+2}
+\frac{\nu}{2}}\lp t^{\frac{\alpha}{2}}v_{02}\rp_{L^2(D_{t,x}^{T,\delta})} \\
&\leq C\phi_m(T)^{-\frac{1}{2}+\frac{m-\alpha}{m+2}-\frac{\nu}{2}}\delta^{-\frac{1}{2}+\frac{m-\alpha}{m+2}+\frac{\nu}{2}}\delta^{\frac{1}{2}}T^{\frac{1}{2}}
\phi_m(T)^{\frac{\alpha}{m+2}-\frac{m}{2(m+2)}} \\
&\qquad \times \lc\sum_{j=0}^{\infty}2^{\frac{j}{2}}
\lp\iint e^{i\{-y\cdot\xi+\phi_m(s))|\xi|\}}\frac{\beta(\delta\xi)}{|\xi|}G(s,y)\md y\md s\rp_{L^2(2^j\leq|\xi|\leq2^{j+1})}\rc \\
&\leq C\delta^{\frac{m-\alpha}{m+2}}\left(\frac{\delta}{\phi_m(T)} \right)^{\frac{\nu}{2}} \lc\sum_{j=0}^{\infty}\Big\|\iiint_{2^j\leq|\xi|\leq2^{j+1}} e^{i\{(x-y)\cdot\xi+\phi_m(s))|\xi|\}}\frac{\beta(\delta\xi)}{|\xi|^{\frac{1}{2}}}G(s,y)\md y\md\xi \md s\Big\|_{L^2_x}\rc.
\end{split}
\label{equ:5.11}
\end{equation}
By applying H\"{o}lder's inequality as in \eqref{equ:4.55}, we get
\begin{equation}
\begin{split}
&\Big\|\iiint_{2^j\leq|\xi|\leq2^{j+1}} e^{i\{(x-y)\cdot\xi+\phi_m(s))|\xi|\}}\frac{\beta(\delta\xi)}{|\xi|^{\frac{1}{2}}}G(s,y)\md y \md\xi \md s\Big\|_{L^2_x} \\
&\leq C\delta_0^{\frac{1}{2}}\lc\iint\Big|\iint_{2^j\leq|\xi|\leq2^{j+1}}e^{i\{(x-y)\cdot\xi+(|y|+\tau)|\xi|\}}
\frac{\beta(\delta\xi)}{|\xi|^{\frac{1}{2}}}
G(\phi_m^{-1}(|y|+\tau),y)\md y\md\xi\Big|^2\md x\md\tau\rc^{\frac{1}{2}}.
\end{split}
\label{equ:5.12}
\end{equation}
On the other hand, an application of Lemma 3.2 in \cite{Gls1} yields that
for each fixed $j\ge 0$,
\begin{equation}
\begin{split}
&\Big\|\iint_{2^j\leq|\xi|\leq2^{j+1}}e^{i\{(x-y)\cdot\xi+(|y|+\tau)|\xi|\}}\frac{\beta(\delta\xi)}{|\xi|^{\frac{1}{2}}}
G(\phi_m^{-1}(|y|+\tau),y)\md y\md\xi\Big\|_{L^2_{\tau,x}} \\
&\leq C\Big(\iint_{\phi_m(1)\leq|y|+\tau\leq \phi_m(2)}\left|G\big(\phi_m^{-1}(|y|+\tau),y\big)\right|^2\md y\md \tau\Big)^{\frac{1}{2}}\\
&\leq C\Big(\int_{\mathbb{R}^n}\int_1^2|G(s,y)|^2s^{\frac{m}{2}}\md s\md y\Big)^{\frac{1}{2}} \\
&\leq C\Big(\int_{\mathbb{R}^n}\int_1^2|G(s,y)|^2\md s\md y\Big)^{\frac{1}{2}}\leq C\lp s^{-\frac{\alpha}{2}}G\rp_{L^2{(\ra)}}. \\
\end{split}
\label{equ:5.13}
\end{equation}
In addition, in the support of $\beta(\delta\xi)$, one has $2^j\delta\leq|\xi|\delta\leq C$, which derives
\begin{equation}
\begin{split}
&j\leq C(1+|\ln{\delta}|).
\end{split}
\label{equ:5.14}
\end{equation}
Substituting \eqref{equ:5.13} and \eqref{equ:5.14} into \eqref{equ:5.12} and further \eqref{equ:5.11}, we finally get

\begin{align*}
&\phi_m(T)^{-\frac{1}{2}+\frac{m-\alpha}{m+2}-\frac{\nu}{2}}\delta^{-\frac{1}{2}+\frac{m-\alpha}{m+2}+\frac{\nu}{2}}\lp t^{\frac{\alpha}{2}}v_{02}\rp_{L^2(D_{t,x}^{T,\delta})} \\
&\leq C\delta^{\frac{m-\alpha}{m+2}}(1+|\ln{\delta}|)\delta^{\frac{\nu}{2}} \phi_m(t)^{-\frac{\nu}{2}}\delta_0^{\frac{1}{2}}\lp s^{-\frac{\alpha}{2}}G\rp_{L^2{(\ra)}}\leq C\delta_0^{\frac{1}{2}}\lp s^{-\frac{\alpha}{2}}G\rp_{L^2{(\ra)}}.
\end{align*}

\subsection{Estimate for small times}
Now it remains to prove \eqref{equ:5.7} for \(\phi_m(T)\leq10\phi_m(4)\), where
we have \(\delta_0\leq\delta\leq10\cdot4^{\frac{m+2}{2}}\). Our task is reduced to prove
\begin{equation}
\delta^{-\frac{1}{2}+\frac{m-\alpha}{m+2}+\frac{\nu}{2}}\lp t^{\frac{\alpha}{2}}v\rp_{L^2(D_{t,x}^{T,\delta})}\leq C\delta_0^{\frac{1}{2}}\lp s^{-\frac{\alpha}{2}}G\rp_{L^2(\ra)}.
\label{equ:5.17}
\end{equation}

Since \(1\lesssim s\leq t\leq4\cdot10^{\frac{2}{m+2}}\), as in \eqref{equ:v-L2} , we can write
\[v=\int_0^t\int_{\mathbb{R}^n}\int_{\mathbb{R}^n}e^{i\{(x-y)\cdot\xi-[\phi_m(t)-\phi_m(s)]|\xi|\}}
|\xi|^{-1}G(s,y)\md y\md\xi \md s,\]
and split $v$ into a low frequency  part and a high frequency part respectively. We choose a function $\beta\in C_0^\infty(\mathbb{R}^n)$ satisfying $\beta=1$ near the origin such that $v=v_0+v_1$, where
\begin{equation*}
\begin{split}
v_1&=\int_0^t\int_{\mathbb{R}^n}\int_{\mathbb{R}^n}e^{i\{(x-y)\cdot\xi-[\phi_m(t)-\phi_m(s)]|\xi|\}}
\frac{1-\beta(\delta\xi)}{|\xi|}G(s,y)\md y\md\xi \md s. \\
\end{split}
\end{equation*}

If we set \(\phi_m(s)=|y|+\tau\) and use H\"{o}lder's inequality as in \eqref{equ:4.55}, then
\begin{equation*}
\begin{split}
|v_1|&\lesssim \delta_0^{\f{1}{2}}\bigg(\int_{\delta_0}^{2\delta_0}\bigg|\int_{\mathbb{R}^n}\int_{\mathbb{R}^n}e^{i\{(x-y)\cdot\xi-[\phi_m(t)-\phi_m(s)]|\xi|\}}
\frac{1-\beta(\delta\xi)}{|\xi|}G(\phi_m^{-1}(|y|+\tau),y)\md y\md\xi\bigg|^2\md \tau\bigg)^{\f{1}{2}}\\
&=:C\delta_0^{\f{1}{2}}\bigg(\int_{\delta_0}^{2\delta_0}|\bar{T_1}(t,\tau,\cdot)|^2\md \tau\bigg)^{\f{1}{2}}.
\end{split}
\end{equation*}

Note $\frac{1-\beta(\delta\xi)}{|\xi|}=O(\delta)$. Then the expression of $v_1$ is similar to \eqref{equ:H.2} with $Rez=0$. Consequently we can apply the method of \eqref{equ:4.60-1} to get
\[\|\bar{T_1}(t,\tau,\cdot)\|_{L^2(\mathbb{R}^n)}\leq C\big(\phi_m(t)-\tau\big)^{\frac{\nu}{2}}
\delta^{-\frac{\nu+1}{2}+1}\|G(s,\cdot)\|_{L^2(\mathbb{R}^n)},\]
which derives
\begin{equation}\label{Y-4}
\lp t^{\frac{\alpha}{2}}v_1\rp_{L^2}\leq C\delta_0^{\frac{1}{2}}\delta^{-\frac{\nu+1}{2}+1}\phi_m(T)^{\frac{\nu}{2}+\frac{\alpha}{m+2}}\|G\|_{L^2(\ra)}.
\end{equation}
Due to $\delta\lesssim \phi_m(T)\lesssim 10\phi_m(4)$ and \(\phi_m(T)\geq\phi_m(1)\), the estimate \eqref{equ:5.17} for $v_1$ is an immediately consequence of \eqref{Y-4}.

We now estimate $v_0$. At first, similarly to \eqref{equ:5.15}, we have
\begin{equation*}
\begin{split}
\bigg|\int_{|\xi|\leq1}e^{i\{(x-y)\cdot\xi-[\phi_m(t)-\phi_m(s)]|\xi|\}}\frac{\beta(\delta\xi)}{|\xi|}\md\xi\bigg|
\le C\big(1+\big|x-y\big|\big)^{-\frac{n-1}{2}}.
\end{split}
\end{equation*}
Thus the corresponding inequality \eqref{equ:5.7} holds if we replace $v$ by
\[v_{01}=\int_1^2\int_{\mathbb{R}^n}\int_{|\xi|\leq1}e^{i\{(x-y)\cdot\xi-[\phi_m(t)-\phi_m(s)]|\xi|\}}
\frac{\beta(\delta\xi)}{|\xi|}G(s,y)\md y\md\xi \md s.\]

As in \eqref{equ:5.16} one has
\begin{equation*}
\begin{split}
&\lp t^{\frac{\alpha}{2}}v_{01}\rp_{L^2(D_{t,x}^{T,\delta})} \\
&\leq\bigg\|t^{\frac{\alpha}{2}}\iiint_{|\xi|\leq1}e^{i\{(x-y)\cdot\xi-[\phi_m(t)-\phi_m(s)]|\xi|\}}\frac{\beta(\delta\xi)}{|\xi|}\md\xi
G(s,y)\md y\md s\bigg\|_{L^2(D_{t,x}^{T,\delta})} \\
&\leq C\lp s^{-\frac{\alpha}{2}}G\rp_{L^2}\bigg\|\Big(\int_T^{2T}\int_{\delta\leq\phi_m(t)-|x|\leq2\delta}
(1+\big|x-y\big|)^{-(n-1)}\md x\md t\Big)^{\frac{1}{2}}\bigg\|_{L^2_{s,y}(D_{s,y}^{\delta_0})}.
\end{split}
\end{equation*}

Note that in this case of \(|x|\leq\phi_m(t)\leq10\phi_m(2)\), direct computation yields
\begin{equation*}
\begin{split}
\bigg\|&\Big(\int_T^{2T}\int_{\delta\leq\phi_m(t)-|x|\leq2\delta}
\big(1+\big|x-y\big|\big)^{-(n-1)}\md x\md t\Big)^{\frac{1}{2}}\bigg\|_{L^2_{s,y}(D_{s,y}^{\delta_0})} \\
&\leq C\lp\Big(\int_T^{2T}\int_{\phi_m(t)-2\delta}^{\phi_m(t)-\delta}\md r\md t\Big)^{\frac{1}{2}}\rp_{L^2_{s,y}(D_{s,y}^{\delta_0})}\leq C(\delta_0\delta T)^{\frac{1}{2}},
\end{split}
\end{equation*}
which implies that the left side of \eqref{equ:5.17} is bounded by
\begin{equation}
\begin{split}
\delta^{-\frac{1}{2}+\frac{m-\alpha}{m+2}+\frac{\nu}{2}}
(\delta_0\delta T)^{\frac{1}{2}}\| s^{-\frac{\alpha}{2}}G\|_{L^2(D_{s,y}^{\delta_0})}\leq C\delta^{\frac{m-\alpha}{m+2}} \delta_0^{\frac{1}{2}}\| s^{-\frac{\alpha}{2}}G\|_{L^2(\rb)}.
\end{split}
\label{equ:5.18}
\end{equation}

Consequently, our proof will be completed once the following inequality holds
\begin{equation}
\begin{split}
\delta^{-\frac{1}{2}+\frac{m-\alpha}{m+2}+\frac{\nu}{2}}\| t^{\frac{\alpha}{2}}v_{02}\|_{L^2(D_{t,x}^{T,\delta})}                                                                                                                                                                                                                                                                                                                                                                                                                                                                                                                                                                                                                                                                                                                                                                                                      \leq C\delta_0^{\frac{1}{2}}\| s^{-\frac{\alpha}{2}}G\|_{L^2(\ra)},
\end{split}
\label{Y-1}
\end{equation}
where
\[v_{02}=\int_1^2\int_{\mathbb{R}^n}\int_{|\xi|\geq1}e^{i\{(x-y)\cdot\xi-[\phi_m(t)-\phi_m(s)]|\xi|\}}
\frac{\beta(\delta\xi)}{|\xi|}G(s,y)\md y\md\xi \md s.\]
But \eqref{Y-1} just only follows from the estimate of \(v_{02}\) in Section \ref{sec4:w0:small} if one notes
the condtion \(\delta\leq\phi_m(T)\leq10\phi_m(2)\).

\textbf{Collecting the estimates on $w^0$ and $w^1$ in Section 4.2 and Section 4.3 respectively,
the proof of \eqref{equ:3.4.1} is completed.}

\section{Proof of Theorem 1.2}

\begin{proof}[Proof of Theorem~\ref{thm:global existence}]

Based on the smallness of the initial data
in \eqref{YH-4} , we now use the standard Picard iteration and contraction mapping principle to obtain Theorem 1.2. First let $u_{-1}\equiv0$, then for $k=0,1,2,3,\ldots$, let $u_k$ be the
weak solution of the following equation
\begin{equation}\label{equ:6.2}
\begin{cases}
&\partial_t^2 u_k-t^m\triangle u_k=t^\alpha|u_{k-1}|^p, \quad (t,x)\in(T_0, \infty)\times\mathbb{R}^n,\\
&u_k(T_0,x)=\ve f(x)\quad \partial_tu_k(T_0,x)=\ve g(x).
\end{cases}
\end{equation}

For any $p\in (p_{crit}(n,m,\alpha ), p_{conf}(n,m,\alpha )]$, one can always fix a number $\gamma>0$ satisfying
\[\frac{1}{p(p+1)}<\gamma<\frac{n}{2}-\frac{1}{m+2}-\frac{1}{p+1}\lc n+\frac{2\alpha-m}{m+2}\rc.\]
Set
\begin{align*}
M_k=&\Big\|\Big(\big(\phi_m(t)+M\big)^2-|x|^2\Big)^\gamma t^{\frac{\alpha}{q}}u_k\Big\|_{L^q(\ra)}, \\
N_k=&\Big\|\Big(\big(\phi_m(t)+M\big)^2-|x|^2\Big)^\gamma t^{\frac{\alpha}{q}}(u_k-u_{k-1})\Big\|_{L^q(\ra)},
\end{align*}
where $q=p+1$. For \(k=0\), \(u_0\) solves the linear problem
\begin{equation}\label{equ:6.2.1}
\begin{cases}
&\partial_t^2 u_0-t^m\triangle u_0=0, \quad (t,x)\in\ra,\\
&u_0(T_0,x)=f(x)\quad \partial_tu_0(T_0,x)=g(x).
\end{cases}
\end{equation}
Thus one can apply Lemma 2.1 to \eqref{equ:6.2.1}, we know that there exists a constant $C_0>0$ such that
\[M_0\leq C_0\ve.\]

Notice that for $j$, $k\geq0$,
\begin{equation*}
\begin{cases}
&\partial_t^2 (u_{k+1}-u_{j+1})-t^m \Delta (u_{k+1}-u_{j+1}) =V(u_k,u_j)(u_k-u_j),   \\
&(u_{k+1}-u_{j+1})(T_0,x)=0, \quad \partial_t(u_{k+1}-u_{j+1})(T_0,x)=0,
\end{cases}
\end{equation*}
where
\begin{equation*}
\big|V(u_k,u_j)\big|\le t^\alpha C(|u_k|+|u_j|)^{p-1}
\end{equation*}
By our assumptions
\[\gamma<\frac{n}{2}-\frac{1}{m+2}-\frac{1}{q}\lc n+\frac{2\alpha-m}{m+2}\rc \quad \text{and} \quad p\gamma>\frac{1}{q}, \quad q=p+1,\]
then Theorem~\ref{thm:useful estimate} together with H\"{o}lder's inequality yield
\begin{equation}\label{equ:6.4}
\begin{split}
&\Big\|\Big(\big(\phi_m(t)+M\big)^2-|x|^2\Big)^\gamma t^{\frac{\alpha}{q}}(u_{k+1}-u_{j+1})\Big\|_{L^q(\ra)} \\
&\leq C\Big\|\Big(\big(\phi_m(t)+M\big)^2-|x|^2\Big)^{p\gamma}t^{-\frac{\alpha}{q}}V(u_k,u_j)(u_k-u_j)\Big\|_{L^{\frac{q}{q-1}}(\ra)} \\
&\leq C\Big\|\Big(\big(\phi_m(t)+M\big)^2-|x|^2\Big)^\gamma t^{\frac{\alpha}{q}}(|u_k|+|u_j|)\Big\|_{L^q([T_0,\infty]\times\mathbb{R}^n)}^{p-1} \\
&\quad \times \Big\|\Big(\big(\phi_m(t)+M\big)^2-|x|^2\Big)^\gamma t^{\frac{\alpha}{q}}(u_k-u_j)\Big\|_{L^q(\ra)} \\
&\leq C\big(M_k+M_j\big)^{p-1}\Big\|\Big(\big(\phi_m(t)+M\big)^2-|x|^2\Big)^\gamma t^{\frac{\alpha}{q}}(u_k-u_j)\Big\|_{L^q(\ra)}.
\end{split}
\end{equation}

If $j=-1$, then $M_j=0$, and \eqref{equ:6.4} gives
\[M_{k+1}\leq M_0+\frac{M_k}{2}\quad \text{for} \quad CM_k^{p-1}\leq\frac{1}{2}.\]
This yields that
\[M_k\leq 2M_0 \quad \text{if} \quad C\big(C_0\ve\big)^{p-1}\leq\frac{1}{2}.\]
Thus we get the boundedness of $\{\big((\phi_m(t)+M)^2-|x|^2\big)^\gamma t^{\frac{\alpha}{q}}u_k\}$ in the space $L^q(\ra)$ for sufficiently small $\ve>0$. Similarly, we have
\[N_{k+1}\leq\frac{1}{2}N_k,\]
which derives that there exists a function \(u\) with $\big((\phi_m(t)+M)^2-|x|^2\big)^\gamma t^{\frac{\alpha}{q}}u\in L^q\big(\ra\big)$ such that  $\big((\phi_m(t)+M)^2-|x|^2\big)^\gamma t^{\frac{\alpha}{q}}u_k\rightarrow \big((\phi_m(t)+M)^2-|x|^2\big)^\gamma t^{\frac{\alpha}{q}}u$ in $L^q\big(\ra\big)$.
In addition, by the uniform boundedness of $\{M_k\}$, one easily calculates for any compact set \(K\subseteq\ra\),
\begin{align*}
&\| t^\alpha |u_{k+1}|^p-t^\alpha |u_{k}|^p\|_{L^{\frac{q}{q-1}}(K)} \\
&\leq C(K)\lp \Big(\big(\phi_m(t)+M\big)^2-|x|^2\Big)^{p\gamma}t^{-\frac{\alpha}{q}}(t^\alpha |u_{k+1}|^p-t^\alpha |u_{k}|^p)\rp_{L^{\frac{q}{q-1}}(K)} \\
&\leq C(K)\lp \big((\phi_m(t)+M)^2-|x|^2\big)^\gamma t^{\frac{\alpha}{q}}(u_{k+1}-u_k)\rp_{L^q(K)} \\
&\leq C(K)N_k\le C2^{-k}.
\end{align*}

Therefore $t^\alpha |u_{k}|^p\rightarrow t^\alpha |u|^p$ in $L^{\frac{q}{q-1}}\big(K\big)$ and hence in \(L^1_{loc}(\ra)\). Thus $u$ is a weak solution of (1.2)
in the sense of distributions. Then we complete the proof of Theorem 1.2.
\end{proof}

\vskip 0.2 true cm

\phantomsection
\addcontentsline{toc}{section}{Acknowledgment}

{\bf Acknowledgment.} The authors would like to thank the referee very much for
his (or her) many helpful suggestions and comments that lead to a substantial improvement
of this manuscript. The authors also would like to thank Prof. Huicheng Yin, Prof. Yi Zhou and Prof. Zhen Lei for many helpful guidance and discussions.

\appendix
\section{Appendix}\label{app}
\begin{lemma}\label{lem:a3}
\eqref{equ:4.66-1} holds.
\end{lemma}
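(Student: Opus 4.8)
\noindent\emph{Proof sketch of Lemma~\ref{lem:a3}.} The plan is to peel off every factor that does not involve the integration variables, thereby reducing \eqref{equ:4.66-1} to a clean $L^{2}$ bound for a Fourier integral operator whose frequency support is a single annulus, and then to invoke the uniform Sobolev trace (adjoint restriction) estimate exactly as in \cite{Gls1}. First, on the line $\mathrm{Re}\,z=0$ the prefactor $\big(z-\frac{(m+2)n+2+2\alpha}{2(\alpha+2)}\big)e^{z^{2}}$ has modulus $(|\mathrm{Im}\,z|+C)e^{-(\mathrm{Im}\,z)^{2}}$ and is hence bounded; since $\phi_m(t)=\frac{2}{m+2}t^{\frac{m+2}{2}}$ one has the identity $t^{\frac{\alpha}{q_0}}=\big(\frac{m+2}{2}\phi_m(t)\big)^{\frac{2\alpha}{(m+2)q_0}}$, so multiplying by the outer factor $\phi_m(t)^{-\frac{m}{2(m+2)}}$ produces, up to a constant, precisely the power $\phi_m(t)^{\frac{2\alpha}{(m+2)q_0}-\frac{m}{2(m+2)}}$ on the right of \eqref{equ:4.66-1}. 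Next, $1-\rho\big(\phi_m(t)^{1-\varrho}\delta^{\varrho}\xi\big)$ is supported in $\{|\xi|\le 2\phi_m(t)^{\varrho-1}\delta^{-\varrho}\}$, the $\xi$--integral in \eqref{equ:4.116} already runs over $\{|\xi|\ge1\}$, and $|\xi|^{-z}$ has modulus $1$ on $\mathrm{Re}\,z=0$; so it will suffice to prove
\begin{equation*}
\Big\|\int_{1\le|\xi|\le R}\int_{\R^{n}}e^{i\{(x-y)\cdot\xi-[\phi_m(t)-|y|]|\xi|\}}\,|\xi|^{-z}g(y)\,\md y\,\md\xi\Big\|_{L^{2}_{x}(\R^{n})}\le C\,R^{1/2}\,\|g\|_{L^{2}(\R^{n})},\qquad \mathrm{Re}\,z=0,
\end{equation*}
where $R:=2\phi_m(t)^{\varrho-1}\delta^{-\varrho}$ and, by the $y$--cutoff present in \eqref{equ:4.116}, the $y$--integration takes place over the fixed annulus $\{\frac12\phi_m(1)\le|y|\le\phi_m(2)\}$.

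The argument I would then run is short. Splitting $e^{-i[\phi_m(t)-|y|]|\xi|}=e^{-i\phi_m(t)|\xi|}e^{i|y||\xi|}$ and applying Plancherel in $x$ over all of $\R^{n}$ (the factors $e^{-i\phi_m(t)|\xi|}$ and $|\xi|^{-z}$ disappearing in modulus), the square of the left-hand side equals $c_{n}\int_{1\le|\xi|\le R}\big|\int e^{-iy\cdot\xi+i|y||\xi|}g(y)\,\md y\big|^{2}\md\xi$. Passing to polar coordinates $\xi=r\omega$ with $r\in[1,R]$ and $\omega\in S^{n-1}$, and setting $h_{r}(y):=e^{ir|y|}g(y)\mathbf 1_{\{\frac12\phi_m(1)\le|y|\le\phi_m(2)\}}(y)$ — so that $\|h_{r}\|_{L^{2}}\le\|g\|_{L^{2}}$ and $h_{r}$ is supported in the fixed ball $B(0,\phi_m(2))$ — the inner integral becomes $\widehat{h_{r}}(r\omega)$, whence the square of the left-hand side is $c_{n}\int_{1}^{R}r^{n-1}\big(\int_{S^{n-1}}|\widehat{h_{r}}(r\omega)|^{2}\,\md\omega\big)\md r$. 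The decisive input is the uniform trace estimate: for $f\in L^{2}(\R^{n})$ supported in a fixed ball $B$ and every $r\ge1$,
\begin{equation*}
r^{n-1}\int_{S^{n-1}}|\widehat f(r\omega)|^{2}\,\md\omega=\big\|\widehat f\big\|_{L^{2}(rS^{n-1})}^{2}\le C(B)\,\|f\|_{L^{2}(\R^{n})}^{2},
\end{equation*}
which one gets by a Schur test from $|\widehat{\mathbf 1_{B}}(\zeta)|\lesssim(1+|\zeta|)^{-\frac{n+1}{2}}$ (equivalently, it is the dual of the uniform $L^{2}$ extension bound for the spheres $rS^{n-1}$, $r\ge1$). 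Applying it with $f=h_{r}$ gives $\le C\int_{1}^{R}\|g\|_{L^{2}}^{2}\,\md r\le CR\|g\|_{L^{2}}^{2}$, i.e. the required $R^{1/2}$ bound; reinstating the prefactors and recalling $R=2\phi_m(t)^{\varrho-1}\delta^{-\varrho}$ then yields \eqref{equ:4.66-1}. This runs parallel to \cite[(4-45)--(4-46)]{HWY4}, the sole change being that the $\xi$--independent amplitude $\phi_m(t)^{-\frac{m}{2(m+2)}}$ here replaces $\big(1+\phi_m(t)|\xi|\big)^{-\frac{m}{2(m+2)}}$ — which is exactly why the exponent $\frac12$ shows up in \eqref{equ:4.66-1} where \eqref{equ:4.66} carries $\frac1{m+2}$.

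The step I expect to be the main obstacle is not a new idea but the bookkeeping around the \emph{uniformity in $r$}: the cutoff $|\xi|\ge1$ (equivalently $r\ge1$) must be retained throughout so that the constant $C(B)$ above does not depend on $r$, and one must check that the $y$--localization to $\{\frac12\phi_m(1)\le|y|\le\phi_m(2)\}$ genuinely reduces matters to a single fixed ball. Beyond that the argument is routine and follows \cite{Gls1} together with the computations in \cite{HWY4}.
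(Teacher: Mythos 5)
Your overall architecture is a legitimate alternative to the paper's argument: the paper estimates the adjoint $\tilde R_z^{*}$ and invokes Lemma~3.2 of Georgiev--Lindblad--Sogge \cite{Gls1}, whereas you apply Plancherel in $x$ directly, pass to polar coordinates in $\xi$, and reduce to a uniform spherical trace estimate. These are dual manifestations of the same restriction phenomenon, and your bookkeeping of the prefactors, the frequency truncation $R=2\phi_m(t)^{\varrho-1}\delta^{-\varrho}$, and the final $R^{1/2}$ count are all correct.

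The gap is in the "decisive input". The trace estimate
\begin{equation*}
r^{n-1}\int_{S^{n-1}}|\widehat f(r\omega)|^{2}\,\md\omega\le C(B)\,\|f\|_{L^{2}(\R^{n})}^{2},\qquad \operatorname{supp} f\subset B,\ r\ge1,
\end{equation*}
is indeed true (it is the Agmon--H\"{o}rmander trace theorem, which is what Lemma~3.2 of \cite{Gls1} encodes), but it does \emph{not} follow from a Schur test on the $TT^{*}$ kernel $|\widehat{\mathbf 1_{B}}(\zeta-\zeta')|\lesssim(1+|\zeta-\zeta'|)^{-(n+1)/2}$. For the restriction operator $T_r f=\widehat f|_{rS^{n-1}}$, the $T_rT_r^{*}$ kernel is $\widehat{\mathbf 1_B}(\zeta-\zeta')$ on $rS^{n-1}\times rS^{n-1}$, and the Schur test produces
\begin{equation*}
\sup_{\omega\in S^{n-1}}\int_{rS^{n-1}}\bigl(1+|\zeta-\zeta'|\bigr)^{-\frac{n+1}{2}}\,\md\sigma(\zeta')\sim\int_0^{\pi r}(1+u)^{-\frac{n+1}{2}}u^{n-2}\,\md u,
\end{equation*}
which is bounded uniformly in $r$ only for $n\le2$, grows like $\log r$ at $n=3$, and like $r^{(n-3)/2}$ for $n\ge4$. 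Feeding this lossy bound into your $\int_1^R\cdots\md r$ yields $R^{(n-1)/2}$ (with a logarithmic correction at $n=3$) in the squared $L^2$ norm, rather than the required $R$, so \eqref{equ:4.66-1} is not recovered for $n\ge3$. The sharp, uniform-in-$r$ bound requires exploiting the oscillation of $\widehat{\mathbf 1_B}$ (Agmon--H\"{o}rmander, or the dyadic/Besov refinement in Lemma~3.2 of \cite{Gls1} followed by a truncated sum, as the paper does), not merely its decay. If you cite the uniform trace estimate as a known result the remainder of your argument closes; as written, the ``by a Schur test'' claim is where the proof fails.
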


\begin{proof}
We shall apply Lemma 3.2 in \cite{Gls1} and the dual argument to derive \eqref{equ:4.66-1}.
For $g\in L^2(\mathbb{R}^n)$,
\[\| \tilde{R}_zg\|_{L^2}=\sup_{h\in L^2}\frac{\big|(h,\overline{\tilde{R}_zg})\big|}{\| h\|_{L^2}}
=\sup_{h\in L^2}\frac{\big|(\tilde{R}_z^*h,\bar{g})\big|}{\| h\|_{L^2}}
\leq\sup_{h\in L^2}\frac{\| \tilde{R}_z^*h\|_{L^2}}{\| h\|_{L^2}}\| g\|_{L^2},\]
thus our task is reduced to estimate \(\tilde{R}_z^*h\). Since
\begin{equation*}
\begin{split}
(\tilde{R}_zg)&(t,x)=\Big(z-\frac{(m+2)n+2+2\alpha}{2(\alpha +2)}\Big)e^{z^2}
t^{\frac{\alpha}{q_0}}\phi_m(t)^{-\frac{m}{2(m+2)}}\no\\
&\times
\int_{|\xi|\geq1}\int_{\frac{1}{2}\phi_m(1)\leq|y|\leq\phi_m(2)}e^{i\{(x-y)\cdot\xi-[\phi_m(t)-|y|]|\xi|\}}\Big(1-\rho\big(\phi_m(t)^{1-\varrho}\delta^\varrho\xi\big)\Big)
g(y)\frac{\md\xi}{|\xi|^z}\md y,
\end{split}
\end{equation*}
the dual operator of $R_zg$ is
\begin{equation*}
\begin{split}
(\tilde{R}_z^*h)&(y)=\Big(\bar{z}-\frac{(m+2)n+2+2\alpha}{2(\alpha +2)}\Big)e^{\bar{z}^2}
t^{\frac{\alpha}{q_0}}\phi_m(t)^{-\frac{m}{2(m+2)}} \\
&\qquad \times\int\int e^{i\{(y-x)\cdot\xi+[\phi_m(t)-|y|]|\xi|\}}\Big(1-\rho\big(\phi(t)^{1-\alpha}\delta^\alpha\xi\big)\Big)h(x)\frac{\md\xi}{|\xi|^{\bar{z}}}\md x \\
&=\Big(\bar{z}-\frac{(m+2)n+2+2\alpha}{2(\alpha +2)}\Big)e^{\bar{z}^2}t^{\frac{\alpha}{q_0}}\phi_m(t)^{-\frac{m}{2(m+2)}} \\
&\qquad \times\int e^{i(y\cdot\xi-|y||\xi|)}e^{i\phi(t)|\xi|}\Big(1-\rho\big(\phi(t)^{1-\alpha}\delta^\alpha\xi\big)\Big)|\xi|^{-\bar{z}}\hat{h}(\xi)\md\xi.
\end{split}
\end{equation*}

Denote
\[\hat{H}(\xi)=e^{i\phi(t)|\xi|}t^{\frac{\alpha}{q_0}}\phi_m(t)^{-\frac{m}{2(m+2)}}\Big(1-\rho\big(\phi(t)^{1-\alpha}\delta^\alpha\xi\big)\Big)|\xi|^{-\bar{z}}\hat{h}(\xi).\]

Then a simple computation yields
\begin{equation*}
\begin{split}
\|\tilde{ R}_z^*h\|_{L^2(\frac{1}{2}\phi(1)\leq|y|\leq\phi(2))}& \leq C\sum_{k=0}^{\infty}2^{\frac{k}{2}}\|\hat{H}\|_{L^2(2^k\leq|\xi|\leq2^{k+1})}\qquad
(\text{by Lemma 3.2 of \cite{Gls1}}) \\
& \lesssim \sum_{2^{k+1}\leq\phi(t)^{\alpha-1}\delta^{-\alpha}}^{\infty}2^{\frac{k}{2}}\|\hat{H}\|_{L^2(2^k\leq|\xi|\leq2^{k+1})}.
\end{split}
\end{equation*}
If $k\geq0$, then for \(Rez=0\),
\begin{equation*}
\|\hat{H}\|_{L^2(2^k\leq|\xi|\leq2^{k+1})}\leq Ct^{\frac{\alpha}{q_0}}\phi_m(t)^{-\frac{m}{2(m+2)}}2^{\frac{k}{2}}\|\hat{h}\|_{L^2(2^k\leq|\xi|\leq2^{k+1})}.
\end{equation*}
Then \eqref{equ:4.66-1} follows immediately.

\end{proof}

\begin{lemma}\label{lem:a4}
\eqref{equ:4.67} holds true.
\end{lemma}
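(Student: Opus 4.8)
The estimate to be proved is the $L^2$–endpoint bound \eqref{equ:4.67-1} at $\mathrm{Re}\,z=0$ for the high–frequency Fourier integral operator $\tilde S_z$ of \eqref{equ:4.116}, whose cutoff $\rho\big(\phi_m(t)^{1-\varrho}\delta^\varrho\xi\big)$ localizes to $|\xi|\gtrsim\Lambda:=\phi_m(t)^{\varrho-1}\delta^{-\varrho}$; this is the high–frequency companion of Lemma~\ref{lem:a3}. As in the proof of Lemma~\ref{lem:a3} the plan is to dualize: since $\tilde S_z g$ is defined by integrating $g$ over the fixed bounded annulus $\{\tfrac12\phi_m(1)\le|y|\le\phi_m(2)\}$, one has
$\lp\tilde S_z g\rp_{L^2(\{\delta\le\phi_m(t)-|x|\le2\delta\})}\le\lp g\rp_{L^2}\sup_h\lp\tilde S_z^* h\rp_{L^2(\frac12\phi_m(1)\le|y|\le\phi_m(2))}$,
the supremum being over $h$ supported in the thin shell $\{\delta\le\phi_m(t)-|x|\le2\delta\}$ with $\lp h\rp_{L^2}=1$; here $\tilde S_z^* h(y)=c(z)\int e^{i(y\cdot\xi-|y||\xi|)}\hat H(\xi)\,\md\xi$ with $\hat H(\xi)=t^{\alpha/q_0}\phi_m(t)^{-\frac{m}{2(m+2)}}e^{i\phi_m(t)|\xi|}\rho\big(\phi_m(t)^{1-\varrho}\delta^\varrho\xi\big)|\xi|^{-\bar z}\hat h(\xi)$, and $|c(z)e^{z^2}|\le C$ on $\mathrm{Re}\,z=0$. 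The routine support statement $\operatorname{supp}\tilde S_z^* h\subseteq\{\tfrac12\phi_m(1)\le|y|\le\phi_m(2)\}$ is immediate from the $y$–integration range in \eqref{equ:4.116}.

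Decomposing dyadically, $\hat H=\sum_j\chi(2^{-j}|\xi|)\hat H$, the factor $\rho$ restricts the sum to $2^j\gtrsim\Lambda$. This is where the argument must depart from Lemma~\ref{lem:a3}: there the $1-\rho$ factor truncated the dyadic sum to $2^j\lesssim\Lambda$, so a direct use of \cite[Lemma~3.2]{Gls1} followed by Cauchy--Schwarz on a \emph{finite} geometric series produced the factor $\Lambda^{1/2}$; for $\tilde S_z$ the sum runs over the \emph{high} frequencies, the geometric series runs the wrong way, and a decaying factor in $2^j$ is needed. To extract it I would estimate each dyadic block by the method of stationary phase, exactly as in Subsection~\ref{sec4:large:ii}: on the supports of $h$ and $g$ the finite–propagation relation $\phi_m(t)-\phi_m(s)\ge|x-y|$ holds (the fact used in the last line of \eqref{equ:5.15}), hence the radial $\xi$–derivative of the phase $(x-y)\cdot\xi-[\phi_m(t)-|y|]|\xi|$ is $\ge(\phi_m(t)-|y|)-|x-y|>0$ for \emph{every} direction $\hat\xi$, while the angular Hessian has rank $n-1$; this yields a kernel bound of the type \eqref{equ:4.51} for the block at frequency $2^j$, now with amplitude carrying $\phi_m(t)^{-\frac{m}{2(m+2)}}$ in place of $(1+\phi_m(t)|\xi|)^{-\frac{m}{2(m+2)}}$ (one power of $|\xi|$ less of decay, since $\mathrm{Re}\,z=0$) and the rapid–decay weight $\big(1+2^j\,\big|(\phi_m(t)-\phi_m(s))-|x-y|\big|\big)^{-N}$.

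Inserting this into the $L^2$–norm over the thin shell and using Cauchy--Schwarz / a Schur–type test together with the support information ($x$ in the shell, of measure $\sim\delta\,\phi_m(t)^{n-1}$; $y$ in $\{\delta_0\le\phi_m(s)-|y|\le2\delta_0\}$; and $\big|(\phi_m(t)-\phi_m(s))-|x-y|\big|$ ranging over an interval of length $\sim\delta$), I would take $N$ large — one unit larger than the choice $N=\frac n2-\frac1{m+2}$ of Subsection~\ref{sec4:large:ii}, to absorb the missing $|\xi|^{-1}$ — so that the rapid–decay weight converts the frequency growth into a net negative power of $2^j$. The geometric series $\sum_{2^j\gtrsim\Lambda}$ then converges, is dominated by its bottom term $2^j\sim\Lambda$, and after the remaining elementary radial integrations yields precisely the power $\Lambda^{1/2}=\big(\phi_m(t)^{\varrho-1}\delta^{-\varrho}\big)^{1/2}$, the prefactor $t^{\alpha/q_0}\phi_m(t)^{-\frac{m}{2(m+2)}}=C\phi_m(t)^{\frac{2\alpha}{(m+2)q_0}-\frac{m}{2(m+2)}}$ matching the right–hand side of \eqref{equ:4.67-1}. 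I expect the main obstacle to be exactly this last bookkeeping: unlike in Lemma~\ref{lem:a3} one cannot discard the oscillation, so one must use the light–cone geometry to turn the high–frequency growth into decay while keeping the powers of $\delta$, $\delta_0$ and $\phi_m(t)$ sharp — in particular checking that the extra $\Lambda$–power forced by the one fewer unit of $|\xi|$–decay is exactly the difference $\tfrac12-\tfrac1{m+2}$ between the exponents in \eqref{equ:4.66-1} and \eqref{equ:4.67-1}.
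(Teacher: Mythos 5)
The paper's proof of this lemma is organized quite differently from yours. It estimates the kernel $K_z$ of $\tilde S_z$ (the paper writes $S_z$, but the formula and normalization make clear it means $\tilde S_z$; also the Lemma statement cites \eqref{equ:4.67} where \eqref{equ:4.67-1} is clearly intended, as you noted) directly, without dualization or a dyadic decomposition. The key step is a near/far split about the light cone: if $\bigl||x-y|-|\phi_m(t)-|y||\bigr|\geq\tfrac{\delta}{2}$, then the frequency lower bound $|\xi|\gtrsim\phi_m(t)^{\varrho-1}\delta^{-\varrho}$ coming from the support of $\rho$ together with \cite[Lemma~3.3]{Gls1} gives arbitrary polynomial decay $C_N(\delta/\phi_m(t))^N$ of the kernel, and the estimate follows by choosing $N$ large. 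In the complementary near-cone region $\bigl||x-y|-|\phi_m(t)-|y||\bigr|<\tfrac{\delta}{2}$, the paper simply cites \cite[Lemmas 3.4--3.5, Proposition 3.6]{Gls1} and asserts the argument is "completely similar," omitting the details.

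Your route — dualize as in Lemma~\ref{lem:a3}, decompose dyadically, estimate each high-frequency block by stationary phase, and sum via Schur/Cauchy--Schwarz — is a genuinely different organization of the same underlying estimate, and the decay mechanism you invoke (the rapid-decay weight $(1+2^j|\Delta|)^{-N}$ with $\Delta$ the distance to the cone, made effective by $2^j\gtrsim\Lambda$ and the distance $\gtrsim\delta$) is exactly the one that makes the paper's far-from-cone kernel bound work. However, your sketch has a genuine gap in precisely the place where the paper delegates to \cite{Gls1}: on the near-cone region $|\Delta|\lesssim 2^{-j}$ the weight $(1+2^j|\Delta|)^{-N}$ provides no decay, the geometric series in $j$ does not collapse to its bottom term, and the $\Lambda^{1/2}$ that must emerge has to come from a finer analysis of the trace-type averaging over the thin shell (this is the content of \cite[Lemmas 3.4--3.5, Prop.~3.6]{Gls1}, translated to the Tricomi phase). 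You flag this as "the main obstacle" but do not resolve it; without it, the dyadic sum does not converge and the claimed $(\phi_m(t)^{\varrho-1}\delta^{-\varrho})^{1/2}$ cannot be extracted. So your approach differs in structure but is incomplete in exactly the step that carries the weight of the lemma, whereas the paper, though terse and reference-heavy, does at least explicitly isolate and cite the near-cone analysis.
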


\begin{proof}
Denote $K_z$ by the kernel of the operator $S_z$. Then
\begin{equation}
\begin{split}
K_z(t;x,y)=&\Big(z-\frac{(m+2)n+2+2\alpha}{2(\alpha +2)}\Big)e^{z^2}t^{\frac{\alpha}{q_0}}\phi_m(t)^{-\frac{m}{2(m+2)}}
 \\ &\times\int_{\mathbb{R}^n}e^{i\{(x-y)\cdot\xi-[\phi_m(t)-|y|]|\xi|\}}\rho(\phi_m(t)^{1-\alpha}\delta^\alpha\xi)\frac{d\xi}{|\xi|^z} \quad
\text{with $Rez=0$}.
\end{split}\label{equ:a.5}
\end{equation}
Note that $|\xi|\geq\phi_m(t)^{\alpha-1}\delta^{-\alpha}$ holds in the integral domain of
\eqref{equ:a.5}. Therefore it follows from Lemma 3.3 in \cite{Gls1} and the condition $\delta\lesssim 10\phi_m(2)$ that for any $N\in\Bbb R^+$,
\begin{align*}
|K_z|\le C_N &t^{\frac{\alpha}{q_0}}\phi_m(t)^{-\frac{m}{2(m+2)}}\Big(\frac{\delta}{\phi_m(t)}\Big)^N\leq C_N\phi_m(t)^{\frac{2\alpha}{(m+2)q_0} -\frac{m}{2(m+2)}}(\phi_m(t)^{\alpha-1}\delta^{-\alpha})^{\frac{1}{2}} \\
&\text{if}\quad \Big||x-y|-\big|\phi_m(t)-|y|\big|\Big|\geq\frac{\delta}{2}.
\end{align*}
This yields \eqref{equ:4.67} when $\Big||x-y|-\big|\phi_m(t)-|y|\big|\Big|\geq\frac{\delta}{2}$.
When $\Big||x-y|-\big|\phi_m(t)-|y|\big|\Big|<\frac{\delta}{2}$,
analogously treated as in Lemma 3.4-Lemma 3.5 and Proposition 3.6 of \cite{Gls1},
\eqref{equ:4.67} can be also obtained, thus we omit the detail since the proof procedure
is completely similar to that in \cite{Gls1}.
\end{proof}

\begin{lemma}\label{lem:a1}
One has that for $\delta>0$,
\begin{equation}
\begin{split}
&\Big\|\int_{\Bbb R^n} e^{i(x\cdot\xi-t|\xi|)}\hat{f}(\xi)\md\xi\Big\|_{L^2(\{x:\delta\leq t-|x|\leq2\delta\})}\leq C\delta^{\frac{1}{2}}\Big(\|\hat{f}\|_{L^2(|\xi|\leq1)}
+\sum_{k=0}^{\infty}2^{\frac{k}{2}}\|\hat{f}\|_{L^2(2^k\leq|\xi|\leq2^{k+1})}\Big).
\end{split}\label{equ:a.6}
\end{equation}
\end{lemma}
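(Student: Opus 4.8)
The plan is to prove \eqref{equ:a.6} by a Littlewood--Paley decomposition of $\hat f$ into the low-frequency piece $|\xi|\le1$ and dyadic annuli $2^k\le|\xi|\le2^{k+1}$, $k\ge0$, and to estimate the contribution of each piece separately on the thin annular shell $\{x:\delta\le t-|x|\le2\delta\}$. Since the square of the $L^2$-norm over this shell is an integral against the measure $\mathbf 1_{\{\delta\le t-|x|\le2\delta\}}\,dx$, and this region has thickness $\sim\delta$ in the radial direction, the natural mechanism is a one-dimensional restriction/trace estimate: for a function whose Fourier transform is supported in $\{|\xi|\sim 2^k\}$, the solution $u(t,x)=\int e^{i(x\cdot\xi-t|\xi|)}\hat f(\xi)\,d\xi$ of the wave equation oscillates at frequency $2^k$ in the radial variable near the light cone, so integrating $|u|^2$ over a radial slab of width $\delta$ gains a factor $\min(\delta,2^{-k})\lesssim\delta$ compared with the full-space $L^2$ bound, but one must also pay $2^k$ from the extra derivative implicit in the trace inequality. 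This is exactly the content of Lemma~3.2 of \cite{Gls1}, which I would invoke: it gives $\|u(t,\cdot)\|_{L^2(\{|x|=r\})}$-type control, or more precisely the slab estimate, by $C\,2^{k/2}\|\hat f\|_{L^2(2^k\le|\xi|\le2^{k+1})}$ after the $\delta^{1/2}$ is extracted.

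Concretely, the first step is to write $f=f_{-1}+\sum_{k\ge0}f_k$ with $\widehat{f_{-1}}=\mathbf 1_{|\xi|\le1}\hat f$ (up to the usual smooth cutoff) and $\widehat{f_k}=\chi(2^{-k}|\xi|)\hat f$, so that by the triangle inequality in $L^2(\{x:\delta\le t-|x|\le2\delta\})$ it suffices to bound each $\int e^{i(x\cdot\xi-t|\xi|)}\widehat{f_k}(\xi)\,d\xi$. For the low-frequency term $k=-1$, the phase $x\cdot\xi-t|\xi|$ is being integrated against a compactly supported symbol, so stationary phase (or just the trivial bound together with the fact that the slab has measure $\lesssim\delta\cdot(1+t)^{n-1}$ and the kernel decays like $(1+|x-y|)^{-(n-1)/2}$, cf.\ \eqref{equ:5.15}) gives the $\delta^{1/2}\|\hat f\|_{L^2(|\xi|\le1)}$ bound; here one uses $n\ge2$ so that $(n-1)/2\ge1/2$ and the relevant radial integral converges after the $\delta$-slab restriction. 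For each $k\ge0$, the second step is to rescale $\xi\mapsto 2^k\xi$ to reduce to a unit-frequency estimate on a slab of rescaled width $2^k\delta$, apply Lemma~3.2 of \cite{Gls1} (the restriction-to-spheres / slab trace estimate for the half-wave propagator), and then rescale back; the $2^{k/2}$ is the price of the trace theorem at unit frequency, and the $\delta^{1/2}$ comes out of the slab width. Summing in $k$ then yields the right-hand side of \eqref{equ:a.6} verbatim.

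The main obstacle is making the dyadic trace estimate uniform and extracting the clean $\delta^{1/2}$ factor: one has to check that Lemma~3.2 of \cite{Gls1} applies with constants independent of $t$ and of the radius $|x|$ on the slab, and that the rescaling genuinely converts the thickness-$\delta$ shell at scale-$2^k$ frequency into a thickness-$(2^k\delta)$ shell at unit frequency without losing powers of $t$; the parameter $t$ here plays the role of the propagation time and the estimate must be time-uniform because in the applications \eqref{equ:5.11}--\eqref{equ:5.13} the relevant time is $\phi_m(s)\in[\phi_m(1),\phi_m(2)]$, i.e.\ bounded, which simplifies matters. A secondary technical point is handling the overlap between consecutive annuli and the transition region between $f_{-1}$ and $f_0$, but that is routine once the cutoffs $\chi$ from \eqref{equ:3.23-1} are fixed. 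Since $\delta>0$ is the only free parameter and the statement is scale-covariant in $\delta$ in the expected way, no genuinely new idea beyond \cite{Gls1} is needed; the lemma is essentially a repackaging of the Glassey--Zhou / Georgiev--Lindblad--Sogge-type trace estimate in the form convenient for the $L^2$ endpoint argument of Section~4.
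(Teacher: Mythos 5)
Your strategy---invoke the Georgiev--Lindblad--Sogge trace estimate (Lemma~3.2 of \cite{Gls1}) on each sphere $\{|x|=r\}$, then integrate radially over the thin shell $\{\delta\le t-|x|\le 2\delta\}$ to extract $\delta^{1/2}$---is the correct one and is, as far as one can tell, the route behind Lemma~A.5 of \cite{HWY4}, to which the paper defers without reproving. But the extra Littlewood--Paley decomposition of $\hat f$ followed by frequency rescaling is redundant: the dyadic structure on the right-hand side of \eqref{equ:a.6} is already built into Lemma~3.2 of \cite{Gls1}, which (after rescaling from $r\in[1,2]$ to general $r$, using also Cauchy--Schwarz on the $j<0$ dyadic terms) gives $r^{(n-1)/2}\bigl(\int_{S^{n-1}}|h(r\omega)|^2\,d\omega\bigr)^{1/2}\lesssim\|\hat h\|_{L^2(|\xi|\le1)}+\sum_{j\ge0}2^{j/2}\|\hat h\|_{L^2(2^j\le|\xi|\le2^{j+1})}$ uniformly in $r>0$. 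With $\hat h(\xi)=e^{-it|\xi|}\hat f(\xi)$ one has $|\hat h|=|\hat f|$ pointwise, and squaring and integrating over $r\in[t-2\delta,t-\delta]$ against $dx=r^{n-1}\,dr\,d\omega$ produces the factor $\delta$ at once; redecomposing $\hat f$ and resumming only reconstitutes the same right-hand side.

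Two points in your sketch would not survive being written out. First, you assert that in the applications the propagation time is $\phi_m(s)\in[\phi_m(1),\phi_m(2)]$ and therefore bounded, so that $t$-uniformity is a non-issue; this is a misidentification. In the use of \eqref{equ:a.6} at \eqref{equ:5.11}, the r\^ole of the ``$t$'' in the lemma is played by $\phi_m(t)$ with $t\in[T,2T]$ and $\phi_m(T)\ge10\phi_m(2)$ arbitrarily large; the bounded quantity $\phi_m(s)$ enters only as the unimodular phase $e^{i\phi_m(s)|\xi|}$ absorbed into $\hat g$. So \eqref{equ:a.6} really must be uniform in large $t$, and this is exactly where the $r^{-(n-1)}$ decay supplied by the trace estimate is needed to cancel the Jacobian $r^{n-1}$. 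Because of this, your low-frequency argument does not close: the trivial bound $\|u_{-1}\|_{L^\infty}\lesssim\|\hat f\|_{L^2(|\xi|\le1)}$ together with the slab measure $\sim\delta\,t^{n-1}$ loses a factor $t^{(n-1)/2}$, and the kernel decay $(1+|x-y|)^{-(n-1)/2}$ you invoke (which in \eqref{equ:5.15} anyway refers to the Tricomi propagator, not the flat half-wave one) is subcritical for Young's inequality in every dimension and cannot repair this. The low-frequency piece requires no separate argument at all: it is the $j<0$ part of the same dyadic trace estimate, which after Cauchy--Schwarz collapses precisely to $\|\hat f\|_{L^2(|\xi|\le1)}$.
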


\begin{proof}
The procedure of the proof is the same as that of Lemma A.5 in \cite{HWY4}, one can refer to \cite{HWY4} for details.
\end{proof}

\phantomsection
\addcontentsline{toc}{section}{References}

\end{document}